\documentclass[12pt]{amsart}

\addtolength{\hoffset}{-1.5cm}
\addtolength{\textwidth}{3cm}
\addtolength{\voffset}{-1cm}
\addtolength{\textheight}{2cm}

\usepackage{calc}
\newlength{\barwidth}
\usepackage{tikz}
\usetikzlibrary{shapes.multipart,calc,shapes.geometric,positioning}

\tikzset{
    horizontal stripes/.style={
        rectangle split, 
        rectangle split parts=3,
        draw,
        minimum width=15pt*\flrat,
        minimum height=15pt,
        rectangle split part fill={#1},
        inner sep=0pt,outer sep=0pt,
        name=flag},
    vertical stripes/.style={
        rectangle split,
        rectangle split horizontal, 
        rectangle split parts=3,
        draw,
        minimum width=15*\flrat,
        minimum height=15pt,
        text width=15pt*\flrat/3,
        rectangle split part fill={#1},
        inner sep=0pt,outer sep=0pt,
        name=flag},
    two stripes/.style={
        rectangle split, 
        rectangle split parts=2,
        draw,
        minimum width=150pt,
        minimum height=100pt,
        rectangle split part fill={#1},
        inner sep=0pt,outer sep=0pt}
    }

\definecolor[named]{FrenchBlue}{RGB}{0,85,164}
\definecolor[named]{FrenchRed}{RGB}{239,65,53}
\definecolor[named]{RussianBlue}{RGB}{0,57,166}
\definecolor[named]{RussianRed}{RGB}{213,43,30}

 \numberwithin{equation}{section}

\usepackage{xcolor}
\usepackage{tikz}
\usepackage{graphicx}
\usepackage{youngtab}
\usepackage[vcentermath]{genyoungtabtikz}
\usepackage{amsmath}
\usepackage{amsfonts}
\usepackage{stmaryrd}
\usepackage{extarrows}

\newcommand{\eq}{\begin{equation}}
\newcommand{\eeq}{\end{equation}}

\newtheorem{thm}{Theorem}

\newtheorem*{ques}{Big Question}
\newtheorem{corollary}{Corollary}
\newtheorem{lem}{Lemma}
\newtheorem{obs}{Observation}
\newtheorem{prop}{Proposition}
\theoremstyle{definition}

\newtheorem{remark}{Remark}

\DeclareMathOperator{\av}{\mathbf{a}}
\DeclareMathOperator{\bv}{\mathbf{b}}
\DeclareMathOperator{\cv}{\mathbf{c}}
\DeclareMathOperator{\dv}{\mathbf{d}}

\DeclareMathOperator{\uv}{\mathbf{u}}
\DeclareMathOperator{\vv}{\mathbf{v}}
\DeclareMathOperator{\wv}{\mathbf{w}}
\DeclareMathOperator{\xv}{\mathbf{x}}
\DeclareMathOperator{\yv}{\mathbf{y}}
\DeclareMathOperator{\zv}{\mathbf{z}}
\DeclareMathOperator{\wk}{walk}

\DeclareMathOperator{\shad}{shadow}
\DeclareMathOperator{\sh}{sh}
\DeclareMathOperator{\arm}{arm}
\DeclareMathOperator{\leg}{leg}

\DeclareMathOperator{\lin}{lines}
\DeclareMathOperator{\dis}{dist}

\newcommand{\ol}[1]{\overline{#1}}
\newcommand\R{\mathbb{R}}
\newcommand\Z{\mathbb{Z}}
\newcommand\Q{\mathbb{Q}}

\newcommand\N{\mathbb{N}}

\newcommand{\sos}{\text{S\'os}}
\newcommand{\la}{\langle}
\newcommand{\ra}{\rangle}
\newcommand{\lam}{\lambda}
\newcommand{\Lcal}{\mathcal{L} }
\newcommand{\Mcal}{\mathcal{M} }
\newcommand{\Rcal}{\mathcal{R} }
\newcommand{\li}{\chi}
\newcommand{\lf}{\textrm{left}}
\newcommand{\rt}{\textrm{right}}
\newcommand{\tp}{\textrm{top}}
\newcommand{\bo}{\textrm{bot}}

\title[The Schensted shape of a S\'os permutation]{Monotone subsets in lattices and the Schensted shape of a S\'os permutation}

\author[K. Liechty]{Karl Liechty}
\author[T. K. Petersen]{T. Kyle Petersen}
\thanks{KL was supported by Simons Foundation Collaboration Grant \#357872. TKP was supported by Simons Foundation Collaboration Grant \#353772.
Both authors are grateful to Katherine Stange for invaluable discussions. Dan Romik provided helpful references.}
\address{Department of Mathematical Sciences, DePaul University, Chicago, IL}

\date{\today}

\begin{document}

\maketitle

\begin{abstract}
For a fixed irrational number $\alpha$ and $n\in \N$, we look at the shape of the sequence $(f(1),\ldots,f(n))$ after Schensted insertion, where $f(i) = \alpha i \mod 1$.  Our primary result is that the boundary of the Schensted shape is approximated by a piecewise linear function with at most two slopes. This piecewise linear function is explicitly described in terms of the continued fraction expansion for $\alpha$. Our results generalize those of Boyd and Steele, who studied longest monotone subsequences. Our proofs are based on a careful analysis of monotone sets in two-dimensional lattices.
\end{abstract}


\section{Introduction}

Fix a real number $\alpha$, and let $f=f_{\alpha}(i) = \alpha i \mod 1 = \alpha i - \lfloor \alpha i \rfloor$ denote the function that returns the fractional part of multiples of $\alpha$. It suffices to restrict attention to $\alpha\in (0,1)$,  since $f_{\alpha} = f_{\alpha +k}$ for any integer $k$. Now, for any positive integer $n$ we can consider the sequence $(f(1),\ldots,f(n))$. These sequences have been studied quite a bit since the 1950's when Steinhaus made the following conjecture: The points $\{e^{2\pi \mathrm{i} f(j)}\}_{j=1}^n$ divide the unit circle into $(n-1)$ pieces of at most 3 distinct lengths. This conjecture was proved shortly thereafter by \sos\, \cite{Sos}, Sur\'{a}nyi, \cite{Suranyi} and \'{S}wierczkowski \cite{Swierczkowski}, independently of one another. This result, known as the Three Gaps Theorem, has since been proven many times by many different methods. It has found applications in and connections to such disparate places as quantum mechanics \cite{Bleher1, Bleher2}, plant growth \cite{Ravenstein}, combinatorics \cite{AlessandriBerthe}, music theory \cite{Narushima}, Riemannian geometry \cite{BiringerSchmidt}, and of course number theory \cite{Hartman,Sos,vanRavenstein}.

In this paper we study a somewhat coarser object: the permutation induced by the list $(f(1),\ldots,f(n))$. Let $w=w(n,\alpha)$ denote the sorting permutation for this list of numbers, i.e., the lexicographically first permutation such that $f(w(1)) \leq \cdots \leq f(w(n))$. We call such a permutation a \emph{S\'os permutation}, following \cite{BKPT}. (Actually, \cite{BKPT} studies the slightly larger set of permutations generated by fractional parts of lines $f_{\alpha,\beta}(i) = \alpha i + \beta \mod 1$, depending on two independent real parameters. But, as discussed in Remark \ref{rem:betashift} at the end of Section \ref{sec:greene}, it will be enough for our purposes to focus on the $\beta = 0$ case.) Let $\sos_n$ denote the set of all such permutations. That is, for fixed $n$, let 
\[
\sos_n = \{ \pi \in S_n : \pi = w(n,\alpha) \mbox{ for some } \alpha \in (0,1) \}.
\] 
These permutations satisfy their own Three Gap Property, as was noticed by \sos{} \cite{Sos}. See Equation \eqref{eq:sos_structure} below.

Our goal is to characterize the \emph{Schensted shape} of these permutations. 

\subsection{The shape of a permutation}

The phrase ``Schensted shape'' comes from a bijection known as \emph{Schensted insertion} or more generally, the \emph{Robinson-Schensted-Knuth correspondence} (RSK for short). This is a bijection between the set of permutations $S_n$ and ordered pairs of \emph{standard Young tableaux}. Drawn in the so-called French style, these tableaux are lower-left justified arrays of $n$ boxes filled with the integers $1,\ldots,n$ such that the numbers increase across rows and up columns.  If we read the lengths of the rows of boxes, we get a partition of $n$, which we call the \emph{shape} of the tableaux. The precise definition of the correspondence between permutations and pairs of tableaux is not important for now. The important thing to know is that the correspondence gives each permutation a well-defined shape in the form of an integer partition. For the permutation $w$, we write $\sh(w)$ for its shape. A feature of Schensted insertion is that if $w\mapsto (P,Q)$, then $w^{-1}\mapsto (Q,P)$. In particular, $\sh(w) = \sh(w^{-1})$, and when we come to prove our main results it will be convenient to work with $w^{-1}$ rather than $w$.

For example, if $\alpha=.3$ and $n=7$, we have 
\[
 (f(1), f(2), f(3), f(4), f(5), f(6), f(7)) = ( .3, .6, .9, .2, .5, .8, .1),
\]
which when sorted is
\[
 (f(7), f(4), f(1), f(5), f(2), f(6), f(3))=  (.1, .2, .3, .5, .6, .8, .9).
\]
Thus $w = 7415263$ is a permutation in $\sos_7$. The permutation $w^{-1} = 3572461$ is order-isomorphic to the list $(.3, .6, .9, .2, .5, .8, .1)$ prior to sorting. The image of $w$ under Schensted insertion is shown in Figure \ref{fig:RSKex}. This pair of tableaux has shape $\sh(w) = (3,3,1)$, corresponding to the row lengths of the tablueax.  

\begin{figure}
\[
 7415263 \leftrightarrow \left(\Yfrench \young(123,456,7), \young(146,257,3) \right)
\]
\caption{A permutation $w$ corresponds to a pair of Young tableaux under the Schensted insertion map. In this example, the shape of $w$ is $\sh(w)= (3,3,1)$.}\label{fig:RSKex}
\end{figure}

Let $\lam =(\lam_1, \lam_2, \dots, \lam_m)$ be an integer partition with $\lambda_1\geq \lambda_2 \geq \cdots \geq \lambda_m > 0$ and $\sum \lambda_i = n$. Following Romik \cite{Romik}, we define the \emph{planar set} of $\lam$ to be the collection of $n$ boxes given by
\[
A_\lam = \bigcup_{\substack{1\le k \le m, \\ 1\le l \le \lam_k}} [l-1,l]\times [k-1,k],
\]
which sits in the first quadrant.  We will use the notation $\partial A_\lam$ to denote the piece of the boundary of $A_\lam$ which is strictly in the first quadrant, i.e., the part of the boundary which is not on one of the coordinate axes. The length of the $i$th row of $A_\lambda$, read from bottom to top, is $\lam_i$. We also let $\lam'=(\lam'_1, \ldots,\lam'_{m'})$ denote the conjugate partition, i.e., $\lam_i'$ is the height of the $i$th column of $A_\lam$, read from left to right. This planar set is our concrete realization of the partition $\lambda=\sh(w)$. 

Schensted defined his map to study the longest monotone subsequences of permutations (or data strings generally). In particular, \cite{Schensted} shows the \emph{arm}, $\arm(w)=\lam_1$, is the length of the longest increasing subsequence of $w$, and the \emph{leg}, $\leg(w)=\lam'_1$, is the length of the longest decreasing subsequence of $w$. 

As later shown by Greene \cite{Greene}, Schensted's map allows the following characterization of collections of monotone subsequences.  Let $I_k$ denote the size of the largest subsequence formed by the union of $k$ increasing subsequences of $w$. Equivalently, $I_k$ is the size of the largest subsequence containing no decreasing subsequence of length $k+1$. In an analogous fashion, let $D_k$ denote the size of the largest subsequence obtained as a union of $k$ decreasing subsequences of $w$.

\begin{thm}[Greene]\label{thm:greene}
For each $k\leq n$, 
\[
I_k = \lambda_1+\cdots+\lambda_k \quad \mbox{ and } \quad D_k = \lambda_1' + \cdots + \lambda_k'.
\]
\end{thm}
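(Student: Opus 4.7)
My plan is to prove the identity $I_k = \lam_1 + \cdots + \lam_k$; the statement for $D_k$ then follows by applying this result to the reverse word $w^R = w_n w_{n-1} \cdots w_1$, whose insertion tableau has the conjugate shape $\lam'$ (a standard symmetry of RSK), and for which increasing subsequences correspond bijectively to decreasing subsequences of $w$, giving $D_k(w) = I_k(w^R) = \lam_1' + \cdots + \lam_k'$.

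The strategy for $I_k$ is to reduce to a canonical representative of the Knuth equivalence class. Recall that the elementary Knuth relations $bac \leftrightarrow bca$ and $acb \leftrightarrow cab$ on three adjacent letters (with $a<b<c$) preserve the Schensted insertion tableau $P$, and that every permutation is Knuth-equivalent to the \emph{row reading word} of its insertion tableau (obtained by reading the rows top to bottom, and left to right within each row). Granting these two facts, it suffices to verify: (i) elementary Knuth moves preserve $I_k$; and (ii) for the reading word of a tableau of shape $\lam$, one has $I_k = \lam_1 + \cdots + \lam_k$.

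Claim (ii) is direct. For the lower bound, the bottom $k$ rows of the tableau appear as the last $\lam_1 + \cdots + \lam_k$ letters of the reading word, decomposed into $k$ consecutive increasing blocks. For the upper bound, each column of the tableau contributes a decreasing subsequence to the reading word; any subset $S$ avoiding a decreasing subsequence of length $k+1$ therefore meets column $j$ in at most $\min(k,\lam_j')$ entries, so
\[
|S|\ \leq\ \sum_{j=1}^{\lam_1}\min(k,\lam_j')\ =\ \lam_1+\cdots+\lam_k
\]
by the standard conjugation identity for Young diagrams.

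The main obstacle is claim (i): showing that Knuth moves preserve $I_k$. Given a local move $u\,a\,c\,b\,v \mapsto u\,c\,a\,b\,v$ with $a<b<c$, and a union of $k$ increasing subsequences of the left-hand word, one must exhibit a union of $k$ increasing subsequences of the same total cardinality in the right-hand word (and analogously for the other move). The verification is a local case analysis on which of $\{a,b,c\}$ belong to the chosen union and on how they are distributed among the $k$ subsequences; the delicate case is when all three letters participate. The $k=1$ case is essentially Schensted's original theorem on longest increasing subsequences, and the extension to general $k$ is a careful but routine bookkeeping.
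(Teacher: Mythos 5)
The paper does not prove Greene's theorem; it simply cites Greene's original paper, so there is no in-text argument to compare yours against. Your proposed route --- reduce to the row reading word of the insertion tableau via Knuth equivalence, then verify the formula there by hand --- is the standard modern proof (it is essentially the argument in Fulton's \emph{Young Tableaux} and in Stanley's EC2 Appendix), and the pieces you do carry out are right. Claim (ii) is correct as stated: the last $k$ row-blocks of the reading word give the lower bound, and the columns are decreasing subsequences so a $(k+1)$-decreasing-free set meets column $j$ in at most $\min(k,\lam_j')$ points, which sums to $\lam_1+\cdots+\lam_k$. The reduction to $D_k$ via the Schensted symmetry $P(w^R)=P(w)^t$ is likewise fine.

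The genuine gap is claim (i), which you assert but do not prove, and this is not a peripheral detail: it is the substantive content of Greene's theorem. Your phrase ``careful but routine bookkeeping'' undersells the step. Concretely, for $u\,a\,c\,b\,v \mapsto u\,c\,a\,b\,v$ with $a<b<c$, one direction is immediate (no increasing subsequence of $u\,c\,a\,b\,v$ can use both $a$ and $c$, and every increasing subsequence of $u\,c\,a\,b\,v$ remains increasing in $u\,a\,c\,b\,v$), but the other direction requires a rewiring argument: if one of your $k$ chains in $u\,a\,c\,b\,v$ uses $a$ followed by $c$, that chain is broken after the move, and you must re-splice it with the chain through $b$ (if any), producing $\ldots a\,b\,\ldots$ and $\ldots c\,\ldots$ using the inequalities $a<b<c$ and the fact that everything after $c$ in the old chain is $>c>b$. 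Until this is actually written out --- for both Knuth relations, and with the disjointness of the $k$ chains handled --- the proof is an outline, not a proof. I'd recommend either carrying out the full case analysis for both $K_1$ and $K_2$, or citing a source where the Knuth-invariance of $I_k$ and $D_k$ is proved (Fulton, Chapter 3 does exactly this).
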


Thus, the partition $\sh(w)$ contains all the information one needs about monotone subsequences in $w$, and a ``limit shape theorem'' for planar sets $A_\lam$ can yield detailed information about the expected number and size of monotone subsequences in random permutations.

The literature abounds with examples of shape theorems for random permutations and related objects, many of which are catalogued and explained in Romik's book \cite{Romik}. The most well-known example is the now famous result obtained independently by Vershik and Kerov \cite{VershikKerov} and Logan and Shepp \cite{LoganShepp} that describes a precise limit shape for a uniformly random permutation $w \in S_n$ under the Schensted insertion map. See equation \eqref{eq:vershikkerov} below. The related question we would like to answer is the following.

\begin{ques}
Given a uniformly random S\'os permutation, what is its expected shape?
\end{ques}

At the moment, we lack the understanding to adequately address this question, though this paper provides some first steps. One issue is that ``uniform'' could mean two different things here. It could mean:
\begin{itemize}
\item \emph{combinatorially uniform}, i.e., for each $n$ we select an element of $\sos_n$ with probability $1/|\sos_n|$, or
\item \emph{geometrically uniform}, i.e., for each $n$ we select $\alpha$ uniformly in $(0,1)$ and generate $w(n,\alpha)$.
\end{itemize}

As explained in \cite{BKPT}, a result of Sur\'anyi shows that S\'os permutations are in bijection with \emph{Farey intervals}. For fixed $n$, a Farey interval is of the form $(a/b,c/d)$, $0\le a/b < c/d \le 1$, where $a/b$ and $c/d$ are consecutive reduced fractions of denominator at most $n$. Sur\'anyi's result says the following.

\begin{thm}[Sur\'anyi's bijection]\label{thm:suranyi}
There is a bijection between Farey intervals and \sos\, permutations. That is, $w(n,\alpha)=w(n,\alpha')$ if and only if $\alpha$ and $\alpha'$ lie in the same Farey interval.
\end{thm}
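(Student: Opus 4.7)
The plan is to identify the critical values of $\alpha\in(0,1)$ at which $w(n,\alpha)$ can change as $\alpha$ varies continuously, and to verify these coincide exactly with the Farey fractions of order $n$. Since $w(n,\alpha)$ is determined by the relative order of $f(1),\ldots,f(n)$ (with ties broken lexicographically), two parameters $\alpha,\alpha'$ induce the same permutation if and only if, for every pair $1\le i<j\le n$, the sign of $f(j)-f(i)$ is the same at both.

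Next, I would trace when that sign can flip as $\alpha$ moves. The function $\alpha\mapsto f(j)-f(i)=\{\alpha j\}-\{\alpha i\}$ is piecewise linear and changes sign only where either (i) $f(i)=f(j)$, i.e.\ $\alpha(j-i)\in\Z$, or (ii) one of the summands has a jump discontinuity, i.e.\ $\alpha i\in\Z$ or $\alpha j\in\Z$. As $(i,j)$ ranges over pairs in $\{1,\ldots,n\}$, the denominators $i$, $j$, and $j-i$ cover exactly $\{1,2,\ldots,n\}$, so the set of critical $\alpha\in(0,1)$ is precisely the set of Farey fractions of order $n$ lying in $(0,1)$. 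Hence on every Farey interval $(a/b,c/d)$ no comparison can flip and no $f$-value is discontinuous, so $w(n,\alpha)$ is constant there; each Farey interval thus determines a single \sos{} permutation.

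It remains to show that adjacent Farey intervals produce different permutations. Given a Farey fraction $a/b\in(0,1)$ of order $n$, I would split into cases. If $b\le n-1$, take $i=1$ and $j=1+b$: at $\alpha=a/b$ one has $f(i)=f(j)$, and a first-order expansion at $\alpha=a/b+\epsilon$ gives $f(j)-f(i)=b\,\epsilon$, so the relative order of $i$ and $j$ in the sort flips as $\epsilon$ changes sign. If $b=n$ (necessarily $\gcd(a,n)=1$), then $f(n)=\{\alpha n\}$ itself has a downward jump at $\alpha=a/n$, sliding from near $1$ to near $0$, which moves the index $n$ from near the end of the sort to near the beginning. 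Either way $w(n,\alpha)$ is different on the two sides of $a/b$.

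I expect the main obstacle to be this last step in the $b=n$ case, where no pair $(1,1+b)$ lies in range and one must instead argue via the jump of $f(n)$ that the sort genuinely changes rather than rearranging invisibly. This reduces to checking that the other values $f(i)$ for $1\le i\le n-1$ vary continuously across $\alpha=a/n$ and stay bounded away from both $0$ and $1$, so the relocation of $n$ is unambiguous. Combining the three steps yields the desired bijection between Farey intervals of order $n$ and elements of $\sos_n$.
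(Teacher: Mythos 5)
Your first two steps are sound: on each open Farey interval no difference $f(j)-f(i)$ can vanish or jump (all such critical $\alpha$ are rationals of denominator at most $n$), so $w(n,\alpha)$ is constant there; and your analysis at a Farey fraction $a/b$ (the pair $(1,1+b)$ when $b\le n-1$, the jump of $f(n)$ when $b=n$) does show the permutation changes as $\alpha$ crosses each Farey fraction. The gap is in what you conclude from this. The theorem is an ``if and only if,'' so you must show the map from Farey intervals to $\sos_n$ is \emph{globally} injective: two distinct intervals, however far apart, give distinct permutations. Proving that \emph{adjacent} intervals produce different permutations only shows the map is locally non-constant; it does not rule out the permutation changing at one crossing and returning to an earlier value several intervals later. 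For non-adjacent intervals your argument gives no comparison at all, and this global injectivity is precisely the nontrivial content of Sur\'anyi's theorem.

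A natural way to close the gap within your setup is to recover the interval from the permutation. For $\alpha$ in the open interval $(a/b,c/d)$ one has $f(b)=\alpha b-a\in(0,1/d)$ and $f(d)=1-(c-\alpha d)\in(1-1/b,1)$, and in fact $f(b)$ is the minimum and $f(d)$ the maximum of $f(1),\dots,f(n)$ (this is S\'os's structure result, cf.\ $\pi(1)=b$ in \eqref{eq:sos_structure}, together with its mirror statement for the maximum); hence $w(1)=b$ and $w(n)=d$ are readable off the permutation. Since consecutive Farey fractions satisfy $bc-ad=1$, the numerators are determined by the denominators ($ad\equiv -1\pmod{b}$ with $0\le a<b$, and similarly for $c$), so the ordered pair $(b,d)$ determines the interval and injectivity follows. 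Two smaller points: your opening ``if and only if'' about the signs of $f(j)-f(i)$ is only valid in the direction you actually use (ties can be broken without changing the sorted order), and you should say a word about $\alpha$ equal to a Farey fraction itself, where ties occur and the lexicographic tie-breaking makes $w(n,a/b)$ agree with the interval immediately to its right --- which is why the paper later treats the intervals as half-open when assigning probabilities.
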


Thus, by counting Farey intervals we see the number of \sos{} permutations is 
\[
|\sos_n| = \sum_{k\leq n} \varphi(k) \underset{n\to\infty}{\sim}\frac{3n^2}{\pi^2},
\] 
where $\varphi$ is the Euler totient function (see, e.g., \cite[Section 18.5]{HardyWright}). We see the set $\sos_n$ represents a very small subset of  full symmetric group $S_n$, and it is not surprising that the permutations in $\sos_n$ are highly structured. Indeed, it was shown by \sos\, \cite[Theorem 1]{Sos} that for any $\pi\in \sos_n$ corresponding to the Farey interval $(a/b,c/d)$, we have $\pi(1)=b$ and
 \eq\label{eq:sos_structure}
 \pi(i+1) - \pi(i) = \left\{
 \begin{aligned}
 &b \qquad &\textrm{if} \ \pi(i) &\le n-b \\
   &b-d \qquad &\textrm{if} \ n-b&<\pi(i) < d \\
      &-d \qquad &\textrm{if} \ d&\le\pi(i). \\
      \end{aligned}\right.
 \eeq

Further, Sur\'anyi's result shows how to compute the geometrically uniform distribution on $\sos_n$. If $\pi = w(n,\alpha)$ with $a/b \leq \alpha < c/d$, then the geometric probability of $\pi$ is  $c/d-a/b = 1/bd$. In particular, the identity permutation has probability $1/n$ in the geometrically uniform distribution. 

As the set $\sos_n$ is so highly structured, it may come as little surprise that the distribution of shapes appears quite different from the case of the full symmetric group. Indeed, as we will see stated precisely in Theorem \ref{thm:main}, our best answer to the Big Question (for now) is:

\begin{quote}
\textit{The Schensted shape of a \sos{} permutation has a boundary that is approximately piecewise linear, with at most two slopes.}
\end{quote}

Before making that claim more precise, we review work of Boyd and Steele \cite{BoydSteele} and del Junco and Steele \cite{dJS}, who studied the lengths of the longest increasing and decreasing subsequences in  $w(n,\alpha)$ for $\alpha$ fixed and $n$ varying. 

\subsection{Arms and legs for $w(n,\alpha)$}

Results concerning monotone subsequences from finite strings of real numbers date back at least to the work of Erd\H os and Szekeres \cite{ErdosSzekeres35}, who proved that any sequence of $n^2+1$ distinct real numbers has a monotone subsequence of length $n+1$. For uniformly random permutations in all of $S_n$, the Logan--Shepp/Vershik--Kerov limit shape theorem implies that both the longest increasing subsequence, $\arm(w)$, and longest decreasing subsequence, $\leg(w)$, are asymptotic to $2\sqrt{n}$ as $n\to\infty$. Actually the problem of the asymptotic length of the longest increasing subsequence in a random permutation, known as Ulam's problem, is a fascinating story which predates \cite{LoganShepp, VershikKerov} by about 15 years. We refer interested readers to the book \cite{Romik} and the survey article \cite{Steele95}. 

In contrast with the behavior of uniformly random permutations in $S_n$, the papers \cite{BoydSteele, dJS} found more subtle behavior for the \sos\ permutations $w(n,\alpha)$. In fact, while the paper \cite{dJS} found
\[
 \frac{\log(\arm(w(n,\alpha)))}{\log(n)} \to \frac{1}{2} \quad \mbox{ and } \quad \frac{\log(\leg(w(n,\alpha)))}{\log(n)} \to \frac{1}{2}
\]
for almost all $\alpha$, the paper \cite{BoydSteele} found there is no $\alpha$ for which either $\arm(w(n,\alpha))/\sqrt{n}$ or $\leg(w(n,\alpha))/\sqrt{n}$ tends to a limit! 

The results of Boyd and Steele \cite{BoydSteele} imply that as a function of $n$, the quantity $\arm(w(n,\alpha))/\sqrt{n}$ oscillates between local maxima and minima that can be precisely described in terms of the principal convergents in the continued fraction expansion of $\alpha$, and as a consequence $\lim \sup \arm(w(n,\alpha))/\sqrt{n}$ is finite if and only if the terms of the continued fraction are bounded. In this paper, we will reprove the results of Boyd and Steele in the process of extending them to a tight approximation for the Schensted shape of $w(n,\alpha)$. Before we present the precise version of this approximation, it is helpful to understand Boyd and Steele's results in more detail. 

We first introduce some notation for continued fractions. (See \cite{HardyWright} for a general reference on the topic.) For a real number $x$, its (simple) continued fraction expansion is defined recursively via
\[
 a_0 = \lfloor x \rfloor, \quad x^*_0 = \frac{1}{x-a_0},
\]
and
\[
 a_k = \lfloor x^*_{k-1} \rfloor, \quad x^*_k = \frac{1}{x^*_{k-1}-a_{k-1}},
\]
provided $x^*_{k-1}$ is not an integer. 
We write
\[
 x = [a_0; a_1, a_2, \ldots] = a_0 + \frac{1}{a_1 + \frac{1}{a_2 + \frac{1}{\ddots}}}.
\]
If for some $k$, $x^*_{k-1}$ is an integer, then $a_k$ is the final term in the sequence, and $x$ is rational.

For a fixed real number $\alpha = [a_0; a_1, a_2,\ldots]$, we have a sequence of rational numbers 
\eq\label{def:convergents}
 \frac{p_k}{q_k} = [a_0; a_1, a_2 \ldots, a_k],
\eeq
known as the \emph{principal convergents} to $\alpha$. We denote by $\delta_k = |\alpha - p_k/q_k|$ the distance between $\alpha$ and its $k$th principal convergent, and let $\beta_k = \delta_k^{-1}$ denote the reciprocal of this distance. A fundamental result about approximating with convergents is the approximation (see, e.g., \cite[Theorem 164]{HardyWright})
\begin{equation}\label{eq:diophantine}
 \delta_k < \frac{1}{q_{k+1}q_k}< \frac{1}{q_k^2}.
\end{equation}
We can now give precise statements of the main results of Boyd and Steele. 

\begin{thm}[Arm and leg approximations, see Theorem 1 and Corollary 1 of \cite{BoydSteele}]\label{thm:BSarm}
For $\beta_{2h} \leq n < \beta_{2h+2}$,
\[
 q_{2h+1}(1+n\delta_{2h+1}) -2 < \arm(w(n,\alpha))   \le  q_{2h+1}(1+n\delta_{2h+1})
\]
while for $\beta_{2k-1}\leq n < \beta_{2k+1}$,
\[
 q_{2h}(1+n\delta_{2h}) - 2 < \leg(w(n,\alpha))\le  q_{2h}(1+n\delta_{2h}).
\]
\end{thm}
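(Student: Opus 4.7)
Since $\sh(w) = \sh(w^{-1})$ and $w^{-1}$ is order-isomorphic to $(f(1), \ldots, f(n))$, the quantity $\arm(w)$ equals the length of the longest increasing subsequence of this fractional-part sequence, and analogously for $\leg(w)$. The plan is to reformulate both quantities as two-dimensional integer optimization problems that can be analyzed using the continued fraction expansion of $\alpha$.

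First I would show that a length-$k$ increasing subsequence $f(i_1) < \cdots < f(i_k)$ is equivalent to choosing positive-integer gaps $d_j := i_{j+1} - i_j$ for $j = 1, \ldots, k-1$ satisfying $\sum d_j \le n - 1$ and $\sum \{\alpha d_j\} < 1$. The second bound is forced because each strictly positive step must equal $\{\alpha d_j\}$ (no wraparound in the fractional part), and the partial sums telescope to $f(i_k) - f(i_1) < 1$. Thus $\arm(w) - 1$ counts the maximum number of gaps meeting these two linear constraints. Decreasing subsequences are dual, with $1 - \{\alpha d_j\}$ replacing $\{\alpha d_j\}$.

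Next I would use continued fractions to pinpoint the extremal gap sizes. For $d = q_k$, one has $\{\alpha q_k\} = \delta_k q_k$ when $k$ is even and $\{\alpha q_k\} = 1 - \delta_k q_k$ when $k$ is odd; for the semi-convergent $d = q_{2h} + j q_{2h+1}$ with $0 \le j \le a_{2h+1}$, one has $\{\alpha d\} = \delta_{2h}q_{2h} - j\,\delta_{2h+1}q_{2h+1}$. Restricting the optimization to the two gap sizes $q_{2h}$ and $q_{2h} + q_{2h+1}$, with counts $a$ and $b$ respectively and $s = a + b$, the constraints become
\[
s\,q_{2h} + b\,q_{2h+1} \le n - 1, \qquad s\,\delta_{2h} q_{2h} - b\,\delta_{2h+1} q_{2h+1} < 1.
\]
Eliminating $b$ and invoking the identity $q_{2h+1}\,\delta_{2h}q_{2h} + q_{2h}\,\delta_{2h+1}q_{2h+1} = 1$ (a direct consequence of $\delta_{2h} + \delta_{2h+1} = 1/(q_{2h}q_{2h+1})$, which in turn follows from $p_{2h+1}q_{2h} - p_{2h}q_{2h+1} = \pm 1$) yields $s + 1 \le q_{2h+1}(1 + n\delta_{2h+1}) + O(1)$, matching the claimed upper bound up to the $-2$ slack. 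For the lower bound I would exhibit an explicit increasing subsequence using appropriate numbers of these two gap sizes, arranged so the running $f$-value stays in $[0,1)$; saturating both constraints produces a subsequence of length at least $q_{2h+1}(1 + n\delta_{2h+1}) - 2$.

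The leg bound follows by swapping parities throughout: decreasing subsequences correspond to gaps $d$ with $\{\alpha d\}$ near $1$, making the odd-indexed convergents the relevant ones, and the analogous identity $q_{2h}\,\delta_{2h-1}q_{2h-1} + q_{2h-1}\,\delta_{2h}q_{2h} = 1$ yields the bound $q_{2h}(1 + n\delta_{2h})$. The hard part will be justifying that the two-gap-size restriction truly captures the overall optimum, i.e., ruling out ``exotic'' gap sequences that mix many convergents of differing parities. This reduces to showing that the lattice-point set $\{(d, \{\alpha d\}) : 1 \le d \le n\}$ has a lower convex hull determined, in the relevant range $n \in [\beta_{2h}, \beta_{2h+2})$, by the two convergent denominators $q_{2h}$ and $q_{2h+1}$. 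I expect this to be the technical core of the argument and the point at which the paper's ``monotone sets in lattices'' framework is deployed.
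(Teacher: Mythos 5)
Your proposal follows the original Boyd--Steele integer-programming route: reformulate monotone subsequences as choosing gaps $d_j$ subject to the two constraints $\sum d_j \le n-1$ and $\sum \{\alpha d_j\} < 1$, then analyze the resulting optimization via continued fraction convergents. The gap reformulation is correct (your derivation of $f(i_{j+1}) - f(i_j) = \{\alpha d_j\}$ and the telescoping bound is right), and the identities you invoke, such as $\delta_{2h} + \delta_{2h+1} = 1/(q_{2h}q_{2h+1})$, are exactly the right ones. This is a genuinely different packaging from the paper's proof. The paper recasts everything geometrically: it views the sequence as the integer lattice $L_{a,b}$, characterizes the unit lattice vectors (Proposition \ref{prop:units}, Corollary \ref{cor:simpleangle}) via the slow Euclidean algorithm, and then proves Proposition \ref{prp:l+bounds} by a line-counting argument. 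Every lattice point in $[0,n]^2$ lies on one of the lines $f_j$ of slope $m_{\yv}$ (with $\yv = \langle q_{2h+1}, -nq_{2h+1}\delta_{2h+1}\rangle$), only $f_0, \dots, f_{\lfloor y_1-y_2 \rfloor}$ enter the box, and points of $f_j$ have increasing length exactly $j$; this gives $\ell^+ \le y_1 - y_2$ at once. The lower bound comes from a short geometric argument placing a lattice point on $f_{\lfloor y_1-y_2\rfloor - 1}$. The paper is upfront that its proof is ``rather similar'' to Boyd--Steele's, so the two routes are dual cousins; yours is the primal/constructive one, the paper's is the dual/geometric one.

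The genuine gap is the one you flag yourself: your restricted two-gap computation only bounds the restricted optimum, so it yields a valid lower bound (by explicit construction) but not an unconditional upper bound. The upper bound is where all the real work lives, and you defer it entirely (``the hard part \ldots and the point at which the paper's `monotone sets in lattices' framework is deployed''). The paper's line-counting argument, underpinned by the unit-lattice-vector characterization, is precisely what replaces ``ruling out exotic gap sequences.'' In Boyd--Steele's own language, the missing step is LP duality: one needs a dual certificate $ud + v\{\alpha d\} \ge 1$ valid for \emph{every} positive integer $d$, which is a best-approximation statement about convergents, not a restriction of the primal. Two smaller issues: the semi-convergents $q_{2h} + j q_{2h+1}$ run over $0 \le j \le a_{2h+2}$, not $a_{2h+1}$; and the lower-bound construction needs an explicit integer-feasibility check near the LP vertex to obtain the stated $-2$ slack --- the paper gets this by exhibiting a nearby lattice point using the basis structure, and you would need an analogous argument rather than hand-waving ``saturating both constraints.''
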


These bounds follow naturally from the approach we take in this paper, and a proof is presented in Sections \ref{sec:my} (see Proposition \ref{prp:l+bounds}) and \ref{sec:proof}. Our proof is rather similar to the one in \cite{BoydSteele}, with the main difference being that we describe the integer programming problem presented in \cite[Lemma 3]{BoydSteele} as a problem of longest increasing paths on lattices. Our description lends itself nicely to a simultaneous treatment of increasing and decreasing subsequences which is necessary to understand the full Schensted shape.

Dividing the expressions in Theorem \ref{thm:BSarm} by $\sqrt{n}$ and computing critical points with respect to $n$, it is easy to obtain the following result.

\begin{thm}[Normalized arm and leg extrema, see Theorem 2 of \cite{BoydSteele}]\label{thm:BS}
We have the following local extremes for the normalized arm length $\arm(w(n,\alpha))/\sqrt{n}$ and the normalized leg length $\leg(w(n,\alpha))/\sqrt{n}$.
\begin{enumerate}
\item When $n$ is the nearest integer to $\beta_{2h}$, the normalized arm length achieves a local maximum of approximately 
\[
 M^+_{2h}=\frac{1}{q_{2h}\delta_{2h}^{1/2}}
\]
and the normalized leg length achieves a local minimum of approximately
\[
 m^-_{2h} = 2q_{2h}\delta_{2h}^{1/2}.
\]
\item When $n$ is the nearest integer to $\beta_{2h+1}$, the normalized arm length achieves a local minimum of approximately 
\[
 m^+_{2h+1}=2q_{2h+1}\delta_{2h+1}^{1/2}
\]
and the normalized leg length achieves a local maximum of approximately
\[
 M^-_{2h+1} = \frac{1}{q_{2h+1}\delta_{2h+1}^{1/2}}.
\]
\end{enumerate}
\end{thm}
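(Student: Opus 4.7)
The plan is to observe that Theorem~\ref{thm:BSarm} reduces the problem to a calculus exercise applied to the piecewise smooth envelope $\arm(w(n,\alpha)) \approx q_{2h+1}(1+n\delta_{2h+1})$ on $\beta_{2h}\le n <\beta_{2h+2}$ (and similarly for the leg). Since the $-2$ error in the lower bound is asymptotically negligible relative to the leading $\sim q_{2h+1}\sqrt{n}$ behavior, one may work with the approximation as if it were exact, and then translate the extrema of the envelope back to extrema of the integer-valued $\arm$ and $\leg$ up to an $o(1)$ correction.

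First I would differentiate $g_h(n) := q_{2h+1}(1+n\delta_{2h+1})/\sqrt{n}$ with respect to $n$. The derivative vanishes at the unique point $n=\beta_{2h+1}=1/\delta_{2h+1}$, which always lies in $[\beta_{2h},\beta_{2h+2})$ since $\delta_k$ decreases in $k$. Substituting yields
\[
g_h(\beta_{2h+1}) = q_{2h+1}\cdot \frac{2}{\sqrt{\beta_{2h+1}}} = 2q_{2h+1}\sqrt{\delta_{2h+1}} = m^+_{2h+1},
\]
and convexity of $g_h$ shows this is a minimum on the range, giving item~(2) for the arm.

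The local maxima sit at the piecewise breakpoints $n=\beta_{2h}$. The central identity here is
\[
\delta_{k-1}+\delta_k = \frac{1}{q_{k-1}q_k},
\]
which follows from the determinant relation $p_{k-1}q_k-p_kq_{k-1}=(-1)^k$ together with the standard fact that consecutive principal convergents lie on opposite sides of $\alpha$. Plugging $n=\beta_{2h}=1/\delta_{2h}$ into the envelope on the right yields
\[
q_{2h+1}\bigl(1+\beta_{2h}\delta_{2h+1}\bigr) = q_{2h+1}\cdot \frac{\beta_{2h}}{q_{2h}q_{2h+1}} = \frac{1}{q_{2h}\delta_{2h}},
\]
so $g_h(\beta_{2h})/\!\sqrt{\beta_{2h}}^{\,0}$ normalized gives $1/(q_{2h}\sqrt{\delta_{2h}})=M^+_{2h}$. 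An identical computation using the envelope $g_{h-1}$ valid just to the left, together with $\delta_{2h-1}+\delta_{2h}=1/(q_{2h-1}q_{2h})$, returns the same value, so the approximation is continuous across the breakpoint and the extremum is genuinely a local maximum.

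The analysis for the leg is strictly parallel: the envelope $q_{2h}(1+n\delta_{2h})/\sqrt{n}$ on $\beta_{2h-1}\le n<\beta_{2h+1}$ is critical at $n=\beta_{2h}$, giving the minimum $m^-_{2h}=2q_{2h}\sqrt{\delta_{2h}}$, and evaluating at the boundary $n=\beta_{2h+1}$ via the same continued fraction identity yields the local maximum $M^-_{2h+1}=1/(q_{2h+1}\sqrt{\delta_{2h+1}})$. The only real subtlety is that $\arm$ and $\leg$ are integer valued, so the envelope attains its extrema only up to unit fluctuations; but Theorem~\ref{thm:BSarm} sandwiches the true values in an interval of length $2$, which is negligible compared with the $\sqrt{n}$-scale of the extrema, so the statement holds in the ``approximate'' sense claimed.
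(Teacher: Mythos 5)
Your proposal is correct and follows essentially the same route as the paper, which obtains Theorem~\ref{thm:BS} precisely by dividing the bounds of Theorem~\ref{thm:BSarm} by $\sqrt{n}$ and locating critical points; your use of the identity $\delta_{k-1}+\delta_k=1/(q_{k-1}q_k)$ to evaluate and match the envelopes at the breakpoints $n=\beta_{2h}$ is exactly the computation the paper leaves implicit. One cosmetic remark: $g_h$ need not be convex on all of $[\beta_{2h},\beta_{2h+2})$, but since $g_h'$ changes sign only at $n=\beta_{2h+1}$ (negative before, positive after), unimodality already gives the minimum there and the local maxima at the breakpoints, so nothing essential is affected.
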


We use the example of $\alpha = e = [2; 1, 2, 1, 1, 4, 1, 1, 6,\ldots]$ to illustrate here. In Figure \ref{fig:armleg} we plot $\arm(w(n,e))/\sqrt{n}$ and $\leg(w(n,e))/\sqrt{n}$ to give the reader a feel for the behavior of these sequences. We can see the resemblance of the continued fraction itself reflected in the heights of the peaks and valleys. Note the horizontal scale is logarithmic.

\begin{figure}
\includegraphics[width=14cm]{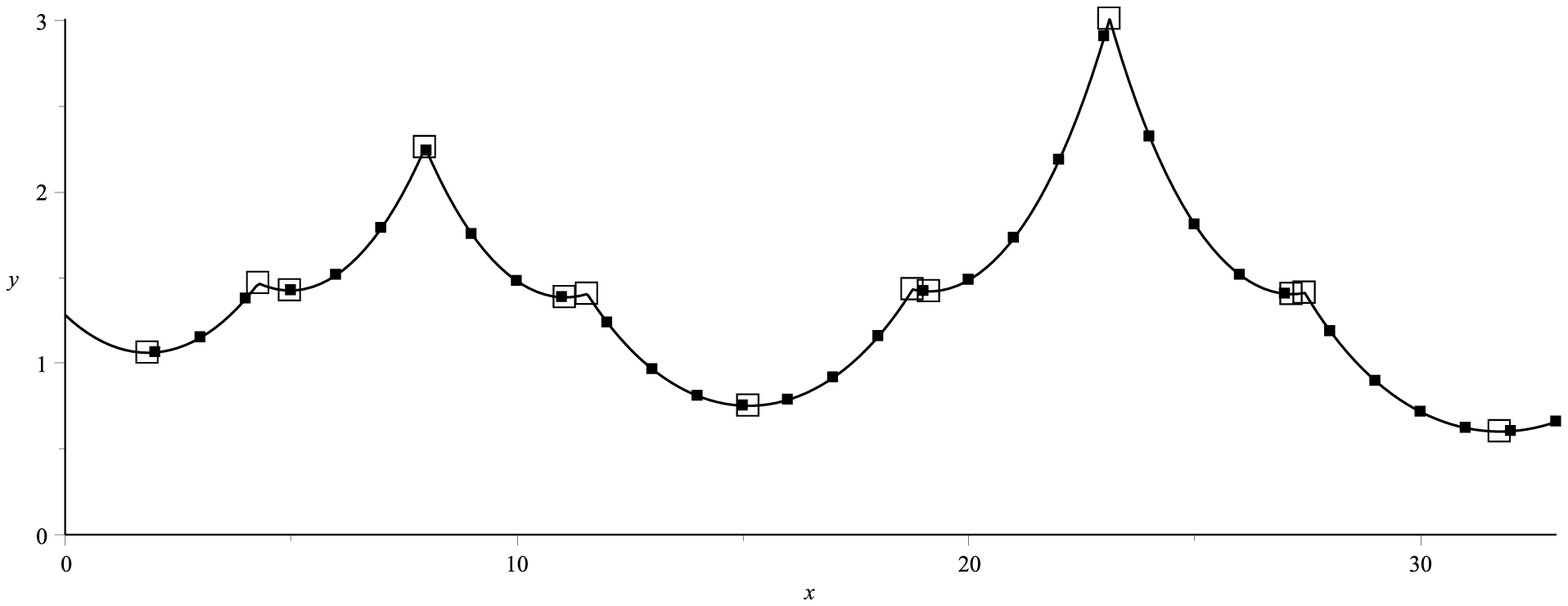}

\includegraphics[width=14cm]{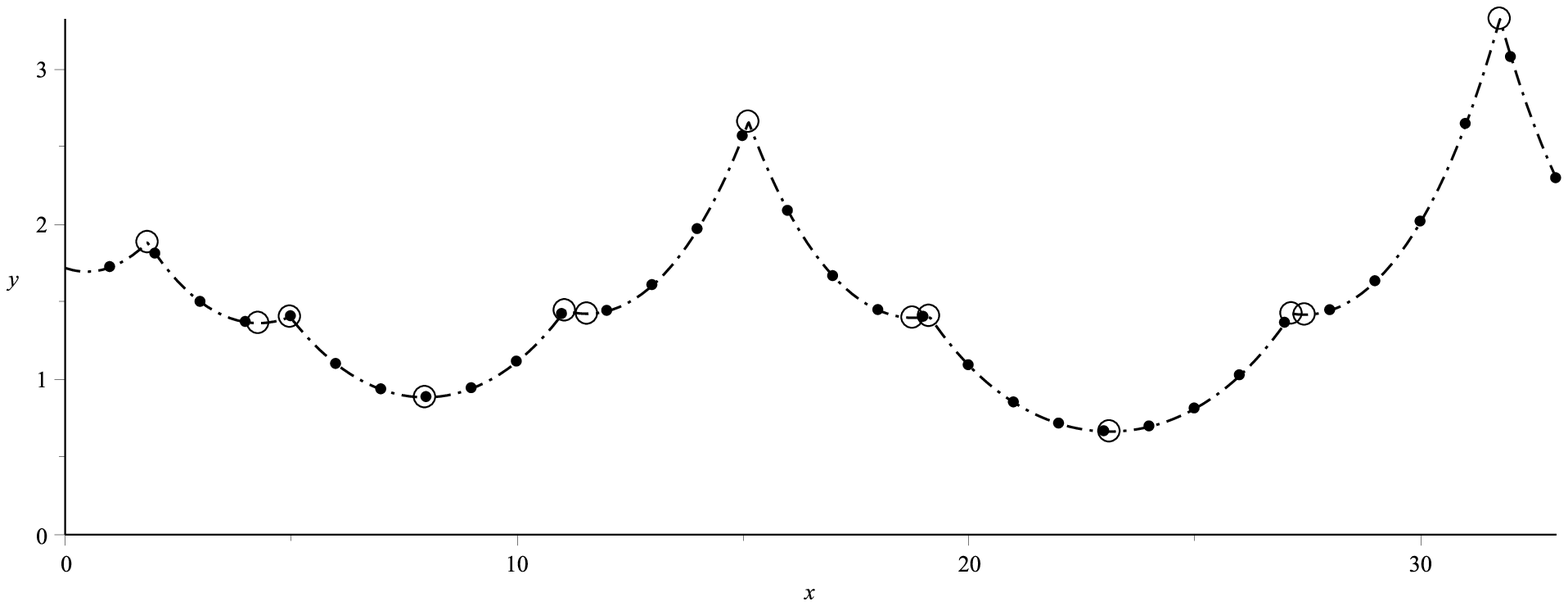}
\caption{Plot of normalized arm and leg lengths for $w(2^n,e)$ for $n\leq 33$. Solid squares mark the points $(n,\arm(w(2^n,e))/\sqrt{2^n})$, while open squares highlight local maxima/minima on the curve. The solid curve predicts the arm length. Solid circles mark the points $(n,\leg(w(2^n,e))/\sqrt{2^n})$, while open circles highlight local maxima/minima on the curve. The dashed curve predicts the leg length.}\label{fig:armleg}
\end{figure}

\subsection{Shape of $w(n,\alpha)$}

When the arm and leg lengths are close to their local extreme values, one can predict predict the Schensted shape of $w(n,\alpha)$ using Theorem \ref{thm:BS} above. A simple situation is when $n$ is approximately equal to $\beta_{2h}$. Here, Theorem \ref{thm:BS} tells us the normalized arm is approximately $M_{2h}^+$ and the normalized leg is approximately $m_{2h}^-$. Since, according to Theorem \ref{thm:BS}, their product is $M_{2h}^+m_{2h}^- = 2$, it is reasonable to predict that the normalized shape is approximated by a triangle with $x$-intercept $M_{2h}^+$ and $y$-intercept $m_{2h}^-$. The boundary of the shape is thus approximated by the line
\eq\label{eq:triangle1}
  y = m_{2h}^- - \frac{m_{2h}^-}{M_{2h}^+}x =  2q_{2h}\delta_{2h}^{1/2} - 2q_{2h}^2\delta_{2h}x.
\eeq
The case when $n$ is approximately $\beta_{2h+1}$ is similar. In this case we would guess the shape is a triangle whose boundary is described by
\eq\label{eq:triangle2}
 y = M_{2h+1}^- - \frac{M_{2h+1}^-}{m_{2h+1}^+} x = \frac{1}{q_{2h+1}\delta_{2h+1}^{1/2}} - \frac{1}{2q^2_{2h+1} \delta_{2h+1}} x.
\eeq
These predictions are verified in Theorem \ref{thm:main} below.

When $n$ is between $\beta_{2h}$ and $\beta_{2h+1}$, we find the shape of $w(n,\alpha)$ is only slightly more complicated: the boundary is approximated by a piecewise linear function whose two linear components are parallel to the lines \eqref{eq:triangle1} and \eqref{eq:triangle2}, and with $x$- and $y$-intercepts given by the arm and leg measurements predicted by Theorem \ref{thm:BSarm}. This is our main result, which we have split into two parts.

\begin{thm}[Estimating $\lambda$ and $\lambda'$]\label{thm:lam_bounds} Suppose $\alpha\in (0,1)$ is an irrational number with principal convergents $\{p_i/q_i\}$. Fix $n\in \N$ large enough so that $n> 1/\alpha$ and $n> 1/(1-\alpha)$. Denote $\lam = \sh(w(n,\alpha))$, the shape of the permutation $w(n,\alpha)$.  With $h$ such that $\beta_{2h} \leq n <\beta_{2h+2}$, define the values $x_0$ and $y_0$ to be
\[
x_0:=\frac{1-n\delta_{2h+1}}{q_{2h+*}(\delta_{2h+1}-\delta_{2h+*})}, \qquad y_0:=\frac{1-n\delta_{2h+*}}{q_{2h+1}(\delta_{2h+*}-\delta_{2h+1})},
\]
where 
\[
*=
\begin{cases} 
0 & \mbox{ if $\beta_{2h} \leq n < \beta_{2h+1}$,}\\
2 & \mbox{ if $\beta_{2h+1} \leq n < \beta_{2h+2}$.}
 \end{cases}
\]
For all $1\leq k \leq y_0-1$, we have
\[
|\lambda_k - q_{2h+1}(1+n\delta_{2h+1}) + 2kq_{2h+1}^2\delta_{2h+1}| < 4+2q^2_{2h+1}\delta_{2h+1}<6,
\]
and for all $1\leq k \leq x_0-1$, we have
\[
|\lambda_k' - q_{2h+*}(1+n\delta_{2h+*}) + 2kq_{2h+*}^2\delta_{2h+*}| < 4+2q_{2h+*}^2\delta_{2h+*}<6.
\]
\end{thm}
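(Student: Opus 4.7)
The plan is to combine Greene's theorem (Theorem \ref{thm:greene}) with a lattice estimate on the sum quantities $I_k$ and $D_k$, and then extract the row and column lengths via differencing. Since $\lambda_k = I_k - I_{k-1}$ and $\lambda_k' = D_k - D_{k-1}$ by Greene, it suffices to prove a quadratic-in-$k$ companion estimate of the form
\[
\bigl|I_k - \bigl(kq_{2h+1}(1+n\delta_{2h+1}) - k^2 q_{2h+1}^2 \delta_{2h+1}\bigr)\bigr| < 2 + q_{2h+1}^2\delta_{2h+1},
\]
valid for $0\le k\le y_0$, together with an analogous bound for $D_k$ in terms of $q_{2h+*}$ and $\delta_{2h+*}$, valid for $0\le k\le x_0$. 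At $k=1$ these specialize to Theorem \ref{thm:BSarm}, so the proof is a genuine $k$-fold extension of the Boyd--Steele bound.

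The heart of the argument lives in the lattice framework of Section \ref{sec:my}. I would view the graph of the permutation $w(n,\alpha)$ as lattice points whose positions are tightly controlled by the S\'os structure \eqref{eq:sos_structure}: the points lie on a small family of parallel lines whose slopes come from the two convergents $p_{2h+1}/q_{2h+1}$ and $p_{2h+*}/q_{2h+*}$ bracketing $\alpha$. A union of $k$ increasing subsequences is then a subset of these points containing no decreasing chain of length $k+1$. For the \emph{lower} bound on $I_k$, I would exhibit $k$ explicit disjoint increasing chains, built by iteratively peeling off the longest arithmetic sub-progression of step close to $q_{2h+1}$; each peel loses approximately $2q_{2h+1}^2\delta_{2h+1}$ in length, as forced by the Diophantine gap \eqref{eq:diophantine}. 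For the \emph{upper} bound, I would extend the integer programming argument of \cite[Lemma 3]{BoydSteele} from a single chain to the cumulative $I_k$, using the rigidity that every long monotone subsequence is essentially forced to be one of a short list of arithmetic progressions, each of tightly constrained length.

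Once the companion estimate is in place, the triangle inequality for differences yields $|\lambda_k - (q_{2h+1}(1+n\delta_{2h+1}) - (2k-1)q_{2h+1}^2\delta_{2h+1})| < 4 + 2q_{2h+1}^2\delta_{2h+1}$. The center stated in the theorem is $q_{2h+1}(1+n\delta_{2h+1}) - 2kq_{2h+1}^2\delta_{2h+1}$, which differs by $q_{2h+1}^2\delta_{2h+1}$ and is absorbed into the error. The final inequality $4+2q_{2h+1}^2\delta_{2h+1}<6$ follows from \eqref{eq:diophantine}, which gives $q_{2h+1}^2\delta_{2h+1}<1$. The column bound is treated symmetrically, replacing $2h+1$ by $2h+*$ throughout.

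The main obstacle is proving the companion estimate for $I_k$ with error independent of $k$: the $k=1$ case is already contained in Boyd--Steele, but producing $k$ disjoint chains of the sharply-predicted lengths and matching them with a tight upper bound requires the full strength of the lattice analysis in Section \ref{sec:my}. The cutoffs $k\le y_0-1$ and $k\le x_0-1$ are precisely the ranges in which the predicted chain length stays positive, so that the peeling remains meaningful; geometrically $y_0$ and $x_0$ are the intercepts of the linear prediction for $\lambda_k$ and $\lambda_k'$ on the coordinate axes, beyond which the approximation no longer describes the true shape.
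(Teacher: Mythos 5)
Your high-level skeleton matches the paper's: Greene's Theorem \ref{thm:greene} reduces the problem to estimating $I_k$ (and, via the symmetry $\rho$ of Section \ref{sec:symmetry}, the column version), and the content is a lattice analysis of unions of $k$ increasing chains. But the proposal has a genuine gap exactly where the work is. The lower bound on $I_k$ --- exhibiting $k$ increasing chains whose union has the predicted size with an error \emph{independent of $k$} --- is asserted rather than proved: ``iteratively peeling off the longest arithmetic sub-progression, each peel losing approximately $2q_{2h+1}^2\delta_{2h+1}$'' is precisely the statement that needs proof, and a greedy peel-by-peel scheme accumulates its losses over the $k$ peels, which is fatal to the $k$-independent error you need. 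The paper does not peel; it builds all $k$ chains simultaneously (Proposition \ref{prp:construction}): $k$ translates $P, P+\yv,\dots,P+(k-1)\yv$ of one increasing path $P$ joining $\psi_j$ to $\phi_{\ol{j}}$ at $j=\max\{j_k,\ol{j_k'}\}$, with the remaining points of the low- and high-index crossing sets attached by $\pm\xv$ steps; making this legitimate requires Lemmas \ref{lem:ldiff}, \ref{lem:bot}, and especially \ref{lem:pathP}, where the restriction $j\le J_0$ (whence the cutoff at $l_{J_0}$, which is within one of $y_0$ by \eqref{eq:x0y0} and Proposition \ref{prop_lj}) actually enters. Nothing in your sketch substitutes for these steps, and your upper bound (``extend the integer program of Boyd--Steele to $I_k$'') is likewise vaguer than the one-line Observation \ref{obs:up_bd_Ik} that each crossing set $\li_i$ meets a union of $k$ increasing chains in at most $k$ points.

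Your quantitative plan is also off. Summing the theorem's own row estimates shows $I_k$ sits near $kq_{2h+1}(1+n\delta_{2h+1})-k(k+1)q_{2h+1}^2\delta_{2h+1}$, not your center $kq_{2h+1}(1+n\delta_{2h+1})-k^2q_{2h+1}^2\delta_{2h+1}$; the discrepancy $kq_{2h+1}^2\delta_{2h+1}$ grows with $k$, so no bound of size $2+q_{2h+1}^2\delta_{2h+1}$ uniform in $k$ can hold with that center, and even with the corrected center a uniformly small error is far from clear, since approximating the step sum $\sum_i\min(l_i,k)$ by a smooth quadratic picks up a $\pm1$ ambiguity on each of the roughly $2kq_{2h+1}^2\delta_{2h+1}$ lines below threshold --- and differencing then doubles whatever error you have (your own accounting already lands at $4+3q_{2h+1}^2\delta_{2h+1}$, exceeding the stated $4+2q_{2h+1}^2\delta_{2h+1}$). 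The paper avoids all of this by never smoothing $I_k$: it proves the exact two-sided bound $\sum_i\min(l_i,k)-3\le I_k\le\sum_i\min(l_i,k)$, differences it so the sum telescopes to $\lin(k)$, and only then applies the estimates on $j_k$ and $j_k'$ once per $k$ (Proposition \ref{prop_lj}), yielding \eqref{eq:lamk} and Proposition \ref{prp:twolines}. Finally, your argument works directly with irrational $\alpha$, but the lattice machinery is built for rational $a/N$; you would still need the reduction of Section \ref{sec:proof} --- choose a convergent $p_m/q_m$ with $q_m>n$ of the correct parity, invoke Sur\'anyi's Theorem \ref{thm:suranyi} to see $w(n,\alpha)=w(n,p_m/q_m)$, and let $m\to\infty$ --- which your write-up omits.
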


\begin{remark} 
In Theorem \ref{thm:lam_bounds} above and Theorem \ref{thm:main} below, the technical assumption that $n>1/\alpha$ and $n>1/(1-\alpha)$ simply ensures that $w(n,\alpha)$ is not the identity permutation or its reverse. In these cases, the Schensted shape of $w(n,\alpha)$ is trivial, consisting of a single row or column of length $n$.
\end{remark}

The above theorem indicates that the boundary of the planar set $A_\lambda$ is very close to one of the two negatively sloped lines
\[
  y = q_{2h+*}(1+n\delta_{2h+*}) - 2q_{2h+*}^2\delta_{2h+*}x \quad \mbox{ and } \quad  x = q_{2h+1}(1+n\delta_{2h+1}) - 2q_{2h+1}^2\delta_{2h+1}y.
\]
 It turns out that these two lines intersect exactly at the point $(x_0,y_0)$ (see equations \eqref{eq:xy-lines} and \eqref{eq:x0y0} in Section \ref{sec:shape-bounds}) and so the boundary of $A_\lambda$ is uniformly close to the piecewise linear function
\begin{equation}\label{eq:Lnalpha}
 L(n,\alpha; x) =  
 \begin{cases} 
 q_{2h+*}(1+n\delta_{2h+*}) - 2q_{2h+*}^2\delta_{2h+*}x & \mbox{for $0\leq x \leq x_0$,} \\
\frac{q_{2h+1}(1+n\delta_{2h+1})-x}{2q_{2h+1}^2\delta_{2h+1}} & \mbox{for $x_0 \leq x \leq q_{2h+1}(1+n\delta_{2h+1})$}.
\end{cases}
\end{equation}
We capture this idea in our next main result, where for fixed $x$, $\dis( L(n,\alpha;x), \partial A_{\lambda} )$ denotes the minimum Euclidean distance from $(x,L(x))$ to a point on $\partial A_{\lambda}$.

\begin{thm}[Two Slope Theorem]\label{thm:main}
For a fixed irrational number $\alpha\in (0,1)$ and $n\in \N$ satisfying  $n>1/\alpha$ and $n>1/(1-\alpha)$, let $\lambda$ and $h$ be as in Theorem \ref{thm:lam_bounds}, and let $L(n,\alpha;x)$ be the piecewise linear function defined in \eqref{eq:Lnalpha}. 
We have the uniform estimate
\[
\dis\left(L(n,\alpha;x), \partial A_\lam\right)< 8, \quad \textrm{for all} \ 0 \le x \le q_{2h+1}(1+n\delta_{2h+1}).
\]
\end{thm}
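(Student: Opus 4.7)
The plan is to derive the uniform Euclidean bound directly from the coordinate-wise estimates on rows and columns provided by Theorem \ref{thm:lam_bounds}. Recall that the boundary $\partial A_\lam$ is a staircase: at each integer column $k\in\{1,\ldots,\lam_1\}$ it contains the horizontal segment $[k-1,k]\times\{\lam_k'\}$, and at each integer row $j\in\{1,\ldots,\lam_1'\}$ it contains the vertical segment $\{\lam_j\}\times[j-1,j]$. By construction, the two linear pieces of $L(n,\alpha;x)$ are exactly the linearizations appearing in Theorem \ref{thm:lam_bounds}: on $[0,x_0]$ the value $L(n,\alpha;k)=q_{2h+*}(1+n\delta_{2h+*})-2kq_{2h+*}^2\delta_{2h+*}$ is the approximation to $\lam_k'$, while on $[x_0,q_{2h+1}(1+n\delta_{2h+1})]$ the inverse relation gives $q_{2h+1}(1+n\delta_{2h+1})-2jq_{2h+1}^2\delta_{2h+1}$ as the approximation to $\lam_j$. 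A short algebraic computation, which I would do in passing, confirms that the two pieces agree at $(x_0,y_0)$.

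For $x$ on the first piece, i.e., $x\in(k-1,k]$ with $1\le k\le x_0-1$, the horizontal staircase segment $[k-1,k]\times\{\lam_k'\}$ lies directly above or below the line point, and the Euclidean distance from $(x,L(n,\alpha;x))$ to this segment equals the vertical distance
\[
|L(n,\alpha;x)-\lam_k'|\le|L(n,\alpha;x)-L(n,\alpha;k)|+|L(n,\alpha;k)-\lam_k'|.
\]
Theorem \ref{thm:lam_bounds} bounds the second term by strictly less than $6$. For the first, the slope of the first piece is $-2q_{2h+*}^2\delta_{2h+*}$, and the Diophantine inequality \eqref{eq:diophantine} gives $q_{2h+*}^2\delta_{2h+*}<1$, so $|x-k|\le 1$ yields a bound of $2$. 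Adding gives the desired bound of $8$. For $x$ on the second piece the argument is symmetric: I would parametrize by $y\in(j-1,j]$ with $1\le j\le y_0-1$, note that the vertical segment $\{\lam_j\}\times[j-1,j]$ of $\partial A_\lam$ is directly to the left or right of the line point $(L^{-1}(y),y)$, and bound the horizontal distance $|L^{-1}(y)-\lam_j|$ by the same split. The inverse $L^{-1}(y)=q_{2h+1}(1+n\delta_{2h+1})-2yq_{2h+1}^2\delta_{2h+1}$ again has slope of absolute value less than $2$ by \eqref{eq:diophantine}, giving the same bound.

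The main technical obstacle is handling the edge cases where the index ranges $1\le k\le x_0-1$ and $1\le j\le y_0-1$ in Theorem \ref{thm:lam_bounds} only barely apply: the transition point $x=x_0$ and the outer endpoints $x=0$ and $x=q_{2h+1}(1+n\delta_{2h+1})$. Near $x=x_0$ I would use that both linear pieces meet at $(x_0,y_0)$, so that for any $x$ close to $x_0$ at least one of the indices $\lceil x\rceil$ or $\lceil L(n,\alpha;x)\rceil$ lands in the respective valid range, letting me apply whichever of the two bounds above is available. At the outer endpoints the hypothesis $n>\max\{1/\alpha,1/(1-\alpha)\}$ rules out the degenerate shapes $\lam=(n)$ and $\lam=(1^n)$, which unwinds via the definitions of $x_0,y_0$ into $x_0>1$ and $y_0>1$, so $k=1$ and $j=1$ are in range. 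Beyond these bookkeeping issues, the proof is essentially just the two slope estimates above.
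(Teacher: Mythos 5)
Your argument away from the corner is sound, and it is essentially the same mechanism the paper uses: the triangle-inequality split $|L(n,\alpha;x)-\lambda_k'|\le|L(n,\alpha;x)-L(n,\alpha;k)|+|L(n,\alpha;k)-\lambda_k'|<2+6$, with the slope bound $2q_{2h+*}^2\delta_{2h+*}<2$ coming from \eqref{eq:diophantine}, is exactly how the bound is obtained in the regions where Theorem \ref{thm:lam_bounds} applies. The genuine gap is your treatment of the corner. The claim that for $x$ close to $x_0$ at least one of $\lceil x\rceil\le x_0-1$ or $\lceil L(n,\alpha;x)\rceil\le y_0-1$ holds is false: at $x=x_0$ itself one has $\lceil x_0\rceil>x_0-1$ and $L(n,\alpha;x_0)=y_0$, so $\lceil L(n,\alpha;x_0)\rceil\ge y_0>y_0-1$, and the same failure occurs on the whole window $\lfloor x_0-1\rfloor<x\le x_0$ of the first piece (where $L(n,\alpha;x)\ge y_0$) and, symmetrically, for $y$ in a window just below $y_0$ on the second piece. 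So on an interval of $x$-values of width up to about $2$ to the left of $x_0$ and up to $4q_{2h+1}^2\delta_{2h+1}<4$ to its right, your proof produces no bound at all --- and this is precisely the region the constant $8$ in the theorem is designed to absorb.

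The missing ingredient is a separate geometric step near $(x_0,y_0)$, which is where the paper does real work: using the estimates at the last valid column index (horizontal coordinate about $x_0-1$) and the last valid row index (vertical coordinate about $y_0-1$), together with the fact that $\partial A_\lambda$ is a weakly decreasing staircase, the paper traps the boundary inside the rectangle $(x_0-4,x_0+8)\times(y_0-4,y_0+8)$ and only then concludes $\dis(L(n,\alpha;x),\partial A_\lambda)<8$ for $x$ in the corner window. You need such a monotonicity/trapping argument (or an equivalent), and the constant $8$ has to be re-derived from it rather than from the $6+2$ split. Two smaller points in the same spirit: for $k$ near $x_0-1$ the estimate of Theorem \ref{thm:lam_bounds} does not by itself guarantee $\lambda_k'\ge 1$ (i.e.\ that column $k$ exists) when $y_0$ is small, so the ``segment directly above or below'' step needs a word there; and the assertion that $n>1/\alpha$, $n>1/(1-\alpha)$ yields $x_0>1$ and $y_0>1$ is not justified --- that hypothesis only excludes the identity permutation and its reverse, and the degenerate-corner cases (small $x_0$ or $y_0$) still have to be folded into the trapping argument rather than dismissed.
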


For example if $\alpha = e$, let us take $n=4700$. We have \[
\delta_6 = e-\frac{p_6}{q_6} = .0003331\ldots > \frac{1}{4700} = .0002127\ldots > .00002803\ldots = \frac{p_7}{q_7}-e = \delta_7,\] 
so $\beta_6 < 4700 < \beta_7$. We compute the slopes as
\[
-2q_{6}^2\delta_{6} = -1.01332\ldots \quad \mbox{ and } \quad -\frac{1}{2q_{7}^2\delta_{7}} = -3.53850\ldots.
\]
It is useful to contrast the shape described in Theorem \ref{thm:main} with the Logan--Shepp/Vershik--Kerov limit shape for a uniformly random permutation in $S_n$. They showed that with high probability, after rescaling by a factor of $1/\sqrt{n}$, the boundary of the corresponding Young diagram for a uniformly random permutation is approximated very well by the curve
\eq\label{eq:vershikkerov}
x+y = \frac{2}{\pi}\left((x-y)\sin^{-1}\left(\frac{x-y}{2}\right)+\sqrt{4-(x-y)^2}\right), \qquad 0\le x\le 2, \ 0\le y \le 2.
\eeq
This curve lies in stark contrast to the piecewise linear boundary for a \sos\, permutation described in Theorem \ref{thm:main}. See Figure \ref{fig:epicture}.

\begin{remark}
The uniform bounds presented in Theorems \ref{thm:lam_bounds} and \ref{thm:main} are not optimal.  They can be improved, even by the methods of this paper. Numerical examples such as Figure \ref{fig:epicture} indicate a very tight bound which seems to be less than 2.  Since we were not able to prove a bound close to this, we focused on readability rather than obtaining the sharpest bounds possible, reasoning that not much is gained by a marginally better bound which is still not optimal. 
\end{remark}

\begin{figure}
\includegraphics[height=10cm]{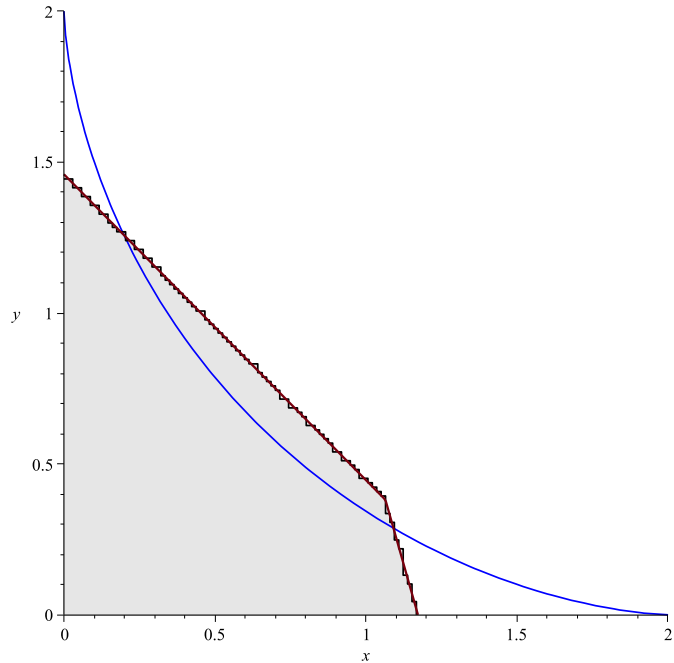}
\vspace{-10pt}
\caption{The planar set of $w(4700,e)$, with boundary approximated by $L(4700,e;x)$ in red. Here the axes are scaled by a factor or $1/\sqrt{n}$, and for contrast we have superimposed the Logan--Shepp/Vershik--Kerov limit shape for a uniformly random permutation in blue.} \label{fig:epicture}
\end{figure}

\subsection{Plan for the rest of the paper} We will prove our main theorems (Theorems \ref{thm:BSarm}, \ref{thm:lam_bounds}, and \ref{thm:main}) as follows. In Section \ref{sec:paths}, we will translate our problem of monotone subsequences in a \sos{} permutation to the problem of monotone paths in an integer lattice. Next, in Section \ref{sec:fulllattice}, we will characterize the shape of permutations given by the lattice $\{(i, a\cdot i \mod N) : i=1,\ldots,n \}$, for any integers $0<a<N$ and $0<n<N$ with $a$ and $N$ relatively prime. We show in Section \ref{sec:proof} how this case of rational numbers implies our results for any irrational $\alpha$, which will complete the proofs.

Finally, in Section \ref{sec:further}, we describe a few further directions this research could take.

\section{Increasing and decreasing lattice paths}\label{sec:paths}

In this section we investigate the geometric interpretation of monotone subsequences of a permutation $w$ via \emph{increasing subsets} of the points $\{(i,w(i))\}$. This is an old idea, which (as explained in \cite{Romik}) goes back to Hammersley \cite{Hammersley72}. For \sos{} permutations we have the special advantage that the set $\{(i, f_{\alpha}(i) )\}$ is a lattice, which, up to vertical scaling, is the same as $\{(i, w^{-1}(i))\}$. As mentioned, $\sh(w) = \sh(w^{-1})$, so it suffices to study increasing subsets in this set of points. The remainder of this section lays out all the details of our approach to increasing subsets in the context of lattices.

\subsection{Lattices and monotone lattice walks}

In general, a \emph{two-dimensional integer lattice} $L$ is the set of all integer linear combinations of two linearly independent vectors $\av$ and $\bv$ in $\R^2$, denoted $L=\Z[\av, \bv] = \{ k\av + l\bv: k, l \in \Z\}$. A \emph{lattice vector} refers to the displacement vector between any two points in $L$. We write points in $L$ as ordered pairs $(a,b)$, while vectors are given in angle brackets, e.g., $\la a, b \ra$. By abuse of notation we use boldface letters for both points and vectors. The meaning will usually be clear from context, and we will alert the reader when care must be taken.

Given a vector $\xv=\la x_1, x_2\ra$, we denote the slope of the vector by 
\[
m_{\xv}:=x_2/x_1,
\]
provided $x_1 \neq 0$. (If $x_1 = 0$ we write $m_{\xv} = \pm\infty$ according to whether the second component $x_2$ is positive or negative.) We will call a non-zero vector $\xv$ an {\it increasing vector} if both components of $\xv$ are non-negative; we will call $\xv$ a {\it decreasing vector} if $x_1\ge 0$ and $x_2 \le 0$. Note that under this definition, the vector $\la x_1, 0\ra$ is both increasing and decreasing whenever $x_1>0$.
A sequence of increasing lattice vectors $\xv_1, \ldots, \xv_k$ is called an \emph{increasing walk}, and a sequence of decreasing lattice vectors $\yv_1, \ldots, \yv_j$ is called an \emph{decreasing walk}. If we think of each vector in the sequence as a step on the lattice beginning at the origin, then the increasing walk $\xv_1, \ldots, \xv_k$ begins at $(0,0)$ and ends at $\xv = \sum_{i=1}^k \xv_i$ in the first quadrant, while the decreasing walk $\yv_1, \ldots, \yv_j$ begins at $(0,0)$ and ends at $\yv = \sum_{i=1}^j \yv_i$ in the fourth quadrant.

Conversely, given any lattice point $\xv$ in the first quadrant, there are a number of increasing walks from $(0,0)$ to $\xv$. Denote the set of such walks by
\[
 \wk^+(\xv) = \left\{ (\xv_1,\ldots,\xv_k) :{\xv_i}  \mbox{ are all increasing vectors and } \sum_{i=1}^k \xv_i = \xv\right\}.
\]
Similarly, for any lattice point $\yv$ in the fourth quadrant we define the collection of decreasing walks from $(0,0)$ to $\yv$ taking negatively sloped steps:
\[
 \wk^-(\yv) = \left\{ (\yv_1,\ldots,\yv_j) : \yv_i  \mbox{ are all decreasing vectors and } \sum_{i=1}^j \yv_i = \yv\right\}.
\]

The \emph{increasing lattice length} of a vector $\xv$ in $L$ is the greatest number of steps in an increasing walk to $\xv$:
\[
 \ell^+(\xv) = \max \{ k : (\xv_1,\ldots,\xv_k) \in \wk^+(\xv)\}.
\]
We similarly define the \emph{decreasing lattice length} of $\yv$:
\[
 \ell^-(\yv) = \max \{ k : (\yv_1,\ldots,\yv_k) \in \wk^-(\yv)\}.
\]
Globally, for a finite subset $S$ of a lattice, we denote the maxima of these quantities by 
\[
 \ell^+=\ell^+(S) = \max \{ \ell^+(\xv) : \xv \in S\} \quad \textrm{ and } \quad \ell^-=\ell^-(S) = \max \{ \ell^-(\yv) : \yv \in S\}.
\]

It will be important to understand lattice length in terms of vectors with lattice length one. A helpful way to characterize such vectors is in terms of their \emph{shadow}. For a lattice vector $\xv = \la x, y\ra$, define the set $\shad(\xv)$ to be the set of nonzero lattice points in the rectangle spanned by $\la x, 0\ra$ and $\la 0, y\ra$, not including $\xv$ itself. The key feature of shadows is the following.

\begin{obs}[Empty shadows]\label{obs:suff1}
Let $\xv \in L$. We have $\shad(\xv) = \emptyset$ if and only if $\xv$ has lattice length one.
\end{obs}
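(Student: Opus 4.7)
The plan is to prove the biconditional by setting up a direct correspondence between length-$2$ increasing-walk decompositions of $\xv$ and points of $\shad(\xv)$. I will treat the case where $\xv$ is an increasing vector (lying in the first quadrant); the case of a decreasing $\xv$ is symmetric, obtained by reflecting across the $x$-axis and replacing $\ell^+$ by $\ell^-$. Note that $\ell^+(\xv) \geq 1$ for every non-zero increasing $\xv$, since the one-step walk $(\xv)$ always exists, so ``lattice length one'' is equivalent to ``no walk of length $\geq 2$ exists''.

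For the direction $\shad(\xv) \neq \emptyset \Rightarrow \ell^+(\xv) \geq 2$, I would take any $\zv \in \shad(\xv)$ and exhibit the explicit length-$2$ walk $(\zv, \xv - \zv) \in \wk^+(\xv)$. By the definition of shadow, $\zv$ is a non-zero lattice vector with components lying componentwise between $0$ and those of $\xv$, and $\zv \neq \xv$. Thus $\zv$ is an increasing vector, and $\xv - \zv$ is a lattice vector with non-negative components (since $\zv$ sits inside the rectangle) that is non-zero (since $\zv \neq \xv$), so $\xv - \zv$ is also an increasing vector.

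For the reverse direction $\ell^+(\xv) \geq 2 \Rightarrow \shad(\xv) \neq \emptyset$, I would pick any walk $(\xv_1, \ldots, \xv_k) \in \wk^+(\xv)$ with $k \geq 2$ and show that the first step $\xv_1$ lies in $\shad(\xv)$. Writing $\xv_1 = \la a, b\ra$ and $\xv = \la x, y\ra$, the vector $\xv_1$ is non-zero (by the paper's convention that increasing vectors are non-zero), has $0 \leq a \leq x$ and $0 \leq b \leq y$ because $\xv - \xv_1 = \xv_2 + \cdots + \xv_k$ is a sum of increasing vectors and hence has non-negative components, and is distinct from $\xv$ because $\xv - \xv_1$ cannot vanish (it is a sum of at least one non-zero increasing vector). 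The only subtlety anywhere in the argument is tracking non-zero-ness at each step---ensuring that neither of the two constructed vectors is zero, and that $\xv_1$ is neither $0$ nor $\xv$---but this is immediate from the convention that increasing vectors are non-zero, so a sum of at least one of them is non-zero as well.
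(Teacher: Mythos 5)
Your proof is correct and follows essentially the same route as the paper: a nonempty shadow yields the two-step walk $(\zv,\xv-\zv)$, and conversely any walk of length at least two forces a lattice point (your $\xv_1$, the paper's intermediate partial sums) into the shadow. Your version just spells out the non-vanishing checks that the paper leaves implicit.
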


\begin{proof}
For simplicity, assume $\xv$ is an increasing lattice vector. Every sequence in $\wk^+(\xv)$ necessarily stays in the shadow of $\xv$. Therefore, if no lattice points are inside the shadow, the lattice length must be one. 

Conversely, if there is a lattice point in the shadow, call the vector from the origin to that point $\xv'$. Note also that $\xv-\xv'$ is an increasing lattice vector, and thus the path $( \xv', \xv-\xv')$ shows $\ell^+(\xv) >1$.
\end{proof}

From now on, we will assume that $L$ is a finite lattice on a torus, given by all multiples of an integer $a$ modulo $b$, with $a < b$ and $\gcd(a,b)=1$. That is, we now declare
\[
L=L_{a,b} = \{ (i, a\cdot i ) \mod b : i \in \Z \} = L[ \la 1, a\ra, \la -1, b-a \ra ] \pmod b
\]
We can picture this lattice as lying in the square $[0,b]\times [0,b]$. We can interpret any point $(x,y)$ in the square as an increasing vector $\xv = \la x,y \ra$ or as a decreasing vector $\yv = \xv - \la 0,b\ra = \la x, y-b\ra$. Hence to each point we can assign an increasing length, $\ell^+(\xv)$, and a decreasing length, $\ell^-(\yv)$. This is illustrated in Figure \ref{fig:ablattice} for $a = 51$ and $b=71$. The marked point is $(30,39)$, so that $\xv = \la 30, 39\ra$ and $\yv = \la 30,-32\ra$.

\begin{figure}
\[
\begin{tikzpicture}[scale=.12, >=stealth]
\foreach \x in {0,...,71}{
 \draw[fill=black] (\x, {mod(\x*51,71)} ) circle (5pt);
}
\draw[fill=black] (0, 71) circle (5pt);
\draw[fill=black] (71, 71) circle (5pt);
\draw (0,71)--(0,0)--(71,0)--(71,71)--(0,71);
\draw[line width =2, ->, opacity=.5, color=blue] (0,0)--(37,41 ) node[midway, left, opacity=1, color=black] {$\xv$};
\draw[->] (0,71)--(4,62);
\draw[->] (4,62)--(37,41);
\draw[line width =2, ->, opacity=.5,color=red] (0,71)--(37, 41 ) node[midway, below left, color=black,opacity=1] {$\yv$};
\draw[->] (33,50)--(37,41) node[midway,right,fill=white, inner sep=1] {$\vv_2$};
\draw[->] (0,71)--(33,50) node[midway,above right] {$3\vv_3$};
\draw[->] (0,0)--(28,8) node[midway,below right,fill=white, inner sep=1] {$4\uv_4$};
\draw[->] (28,8)--(37,41) node[midway,right] {$3\uv_3$};
\draw[->] (0,0)--(9,33);
\draw[->] (9,33)--(37,41);
\end{tikzpicture}
\]
\caption{The lattice $L_{51,71}$. The vector $\xv=\la 30,39\ra$ has $\ell^+(\xv) = 7$, while the vector $\yv = \xv - \la 0,71\ra$ has $\ell^-(\yv) = 4$.}\label{fig:ablattice}
\end{figure}

We will soon show that vectors of lattice length one possess many nice properties and will aid in characterizing lattice length for any point in $L$. The first thing to show is how these vectors come from the Euclidean algorithm.

\subsection{The slow Euclidean algorithm and continued fractions}\label{sec:slowEuclid}

For input integers $a$ and $b$, the usual Euclidean algorithm produces a sequence of remainders $r_i$ and integers $s_i$ and $t_i$ such that $r_i = s_i b + t_i a$. We now describe a version of the extended Euclidean algorithm that records not only every stage of the usual Euclidean algorithm, which invokes the division algorithm in each step, but also the steps of subtraction that the division summarizes. We will refer to this algorithm as the ``slow'' Euclidean algorithm.

Initialize the algorithm with two integers $a$ and $b$ such that $1\leq a \leq b$. We will index our steps with pairs $(i,j)$, where at each step, $(i,j)$, we have an integer combination
\eq\label{eq:EuclidanAlg}
 r_{i,j} = s_{i,j}b + t_{i,j}a.
\eeq
We initialize
\[
\begin{array}{lll}
r_{-1,1} = r_{-1}= b, & s_{-1,1}=s_{-1}=1, & t_{-1,1}=t_{-1}=0\\
r_{0,1} = r_0 = a, & s_{0,1}=s_0 = 0, & t_{0,1} = t_0 = 1.
\end{array}
\]

For each $i\geq 1$, rather than divide $r_{i-1}$ by $r_{i-2}$ (as in the common version of the algorithm), we successively subtract $r_{i-1}$ from $r_{i-2}$ until we can no longer subtract and remain nonnegative. That is, we set $r_{i,1} = r_{i-2}-r_{i-1}$, and while $r_{i,j-1}\geq r_{i-1}$ we set 
\[
r_{i,j} = r_{i,j-1}-r_{i-1}=r_{i-2}-jr_{i-1}.
\] 
We similarly define the coefficients $s_{i,j}$ and $t_{i,j}$ by setting 
\begin{equation}\label{eq:strec}
s_{i,j} = s_{i-2}-js_{i-1} \quad \mbox{ and } \quad t_{i,j} = t_{i-2}-jt_{i-1},
\end{equation}
where $j$ ranges over the same values as the $r_{i,j}$. We iterate $i$ when we reach a value of $j'$ for which $r_{i-1} > r_{i,j'} \geq 0$. At this point we set $r_i = r_{i,j'}$, $s_i=s_{i,j'}$, and $t_i = t_{i,j'}$.  We will call these terms with a single subscript the ``simple" remainders and $s, t$-coefficients, since they are the ones that appear in the usual (fast) Euclidean algorithm (with division rather than subtraction at each step). The terms with two subscripts ($r_{i,j}, s_{i,j}, t_{i,j}$) we refer to as ``slow'' remainders and coefficients. We also remark that this $j'$ is the usual integer quotient,  since $r_{i-2} = r_{i-1}j' + r_i$ with $0\leq r_i < r_{i-1}$. The algorithm terminates when we find $r_{i,j} = 0$. 

The Euclidean algorithm has a well known connection to continued fraction expansions. Recalling the language of continued fractions and their convergents from \eqref{def:convergents}, we write the rational number $a/b$ as
\[
 \frac{a}{b} = [a_0; a_1,a_2,\ldots,a_k] = a_0 + \frac{1}{a_1 + \frac{1}{a_2 + \frac{1}{\ddots + \frac{1}{a_k}}}}.
\]
For each $i=0,\ldots,k$, we thus have sequences of relatively prime pairs $(p_i,q_i)$ such that
\[
 \frac{p_i}{q_i} = [a_0; a_1,\ldots,a_i],
\]
with these fractions giving the principal convergents to $a/b$. It follows from the definition that 
\[
p_{i} =  p_{i-2}+ a_{i}p_{i-1}  \quad \mbox{ and } \quad q_{i} = q_{i-2}+ a_{i}q_{i-1}.
\]
Moreover, we can define intermediate convergents, for $j=1,\ldots,a_i$, via
\begin{equation}\label{eq:pqrec}
 p_{i,j} =  p_{i-2}+ jp_{i-1} \quad \mbox{ and } \quad q_{i,j} = q_{i-2} + jq_{i-1},
\end{equation}
with the property that
\[
 \frac{p_{i,j}}{q_{i,j}} = [a_0; a_1,\ldots,a_{i-1},j]. 
\]
We can see the recurrences in \eqref{eq:pqrec} for the $p_{i,j}$ and $q_{i,j}$ are identical, up to sign, to the recurrences in \eqref{eq:strec} for the $s_{i,j}$ and $t_{i,j}$.
Furthermore, we have 
\[
 \delta_i = | a/b - p_i/q_i| \quad \mbox{ and } \quad \delta_{i,j} = |a/b - p_{i,j}/q_{i,j}|,
\]
or equivalently,
\[
 bq_i \delta_i = | q_ia - p_i b| \quad \mbox{ and } \quad bq_{i,j}\delta_{i,j} = |q_{i,j} a - p_{i,j} b|.
\]
Up to sign, these identities are equivalent to that of Equation \eqref{eq:EuclidanAlg} expressing the $r_{i,j}$ in terms of $s_{i,j}$ and $t_{i,j}$.

As we have the same linear recurrence relations (up to sign) with the same initial values, we make the following observation relating the data in the slow Euclidean algorithm for the pair $(a,b)$ with the data in the sequence of convergents of the continued fraction for $a/b$.

\begin{obs}[Slow Euclidean algorithm and continued fraction convergents]\label{obs:SlowCF}
Let $(a,b)$ be a relatively prime pair of integers. Let $r_{i,j}, s_{i,j}, t_{i,j}$ be as defined in the slow Euclidean algorithm, and $p_{i,j}, q_{i,j}, \delta_{i,j}$ as defined for the sequence of convergents for the continued fraction expansion $a/b = [a_0;a_1,\ldots,a_k]$. Then the following statements hold for $i=0,\ldots,k$ and $j=1,\ldots,a_i$. 
\begin{itemize}
\item We have $a/b \geq p_{i,j}/q_{i,j}$ when $i$ is odd, $a/b \leq p_{i,j}/q_{i,j}$ when $i$ is even.
\item With
\[
 \pm ( a/b - p_{i,j}/q_{i,j} ) < 0,
\]
we have
\eq\label{eq:st_pq}
r_{i,j} = bq_{i,j} \delta_{i,j}, \quad s_{i,j} = \mp p_{i,j}, \quad t_{i,j} = \pm q_{i,j}.
\eeq
\item Moreover, the $a_i$ are the sizes of the blocks in the slow Euclidean algorithm, i.e., $r_i = r_{i-2}-a_ir_{i-1}$.
\end{itemize}
\end{obs}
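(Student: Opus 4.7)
The plan is to prove all three bullets simultaneously by strong induction on the pair $(i,j)$ in lexicographic order, exploiting the single structural fact that the slow-algorithm recurrences $r_{i,j}=r_{i-2}-jr_{i-1}$, $s_{i,j}=s_{i-2}-js_{i-1}$, $t_{i,j}=t_{i-2}-jt_{i-1}$ differ from the convergent recurrences $p_{i,j}=p_{i-2}+jp_{i-1}$, $q_{i,j}=q_{i-2}+jq_{i-1}$ only in the sign of the $j$-term. Iterating this single mismatch produces the alternating $(-1)^i$ that accounts for the $\pm, \mp$ in the statement.

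First I would handle the base cases $i=-1$ and $i=0$ directly, by checking $r_{-1}=b=s_{-1}b+t_{-1}a$ and $r_0=a=s_0b+t_0a$ against the initial convergent data $p_{-1}=1, q_{-1}=0$, $p_0=a_0, q_0=1$. These confirm both the sign of $a/b - p_{i,j}/q_{i,j}$ and the identifications $s_{i,j}=\mp p_{i,j}$, $t_{i,j}=\pm q_{i,j}$ in the lowest cases.

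For the inductive step at $(i,j)$, substituting the inductive hypothesis into \eqref{eq:strec} gives
\[
s_{i,j} = s_{i-2} - j s_{i-1} = (-1)^{i-1}\bigl(p_{i-2}+jp_{i-1}\bigr) = (-1)^{i-1}p_{i,j},
\]
and similarly $t_{i,j}=(-1)^i q_{i,j}$. Plugging this into \eqref{eq:EuclidanAlg} yields
\[
r_{i,j} = s_{i,j}b + t_{i,j}a = (-1)^i\bigl(q_{i,j}a - p_{i,j}b\bigr) = (-1)^i\, b q_{i,j}\bigl(a/b - p_{i,j}/q_{i,j}\bigr),
\]
and since $r_{i,j}\ge 0$ this simultaneously pins down the sign of $a/b-p_{i,j}/q_{i,j}$ (first bullet) and gives $r_{i,j}=bq_{i,j}\delta_{i,j}$ (second bullet). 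For the block-size claim, the slow algorithm's $i$-th block terminates at the largest $j$ with $r_{i-2}-jr_{i-1}\ge 0$, which by the identity just derived equals $\lfloor (q_{i-2}\delta_{i-2})/(q_{i-1}\delta_{i-1})\rfloor$; a short computation using the classical identity $p_{i-1}q_{i-2}-p_{i-2}q_{i-1}=\pm 1$ shows this integer part is exactly $a_i$.

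The main obstacle will be careful sign bookkeeping, particularly at the transition from the last entry of block $i-1$ (namely $(i-1,a_{i-1})$) to the first entry $(i,1)$ of block $i$, since this is the point where the third bullet feeds back into the inductive step for the other two. Once the sign convention is fixed consistently, the rest is routine algebra driven by the parallel shape of the two pairs of recurrences.
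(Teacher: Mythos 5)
Your approach is correct and is essentially the one the paper intends: the two sets of recurrences \eqref{eq:strec} and \eqref{eq:pqrec} differ only in the sign of the $j$-term, so a lexicographic induction from the common base data ($s_{-1}=1=p_{-1}$, $t_{-1}=0=q_{-1}$, $s_0=0=-p_0$, $t_0=1=q_0$, using $a_0=0$ since $a<b$) gives exactly $s_{i,j}=(-1)^{i-1}p_{i,j}$, $t_{i,j}=(-1)^iq_{i,j}$, and hence $r_{i,j}=(-1)^ibq_{i,j}(a/b-p_{i,j}/q_{i,j})\ge 0$.

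Two cautions are worth flagging. First, you never reconciled your derived identities with the bullets you are asked to prove. Your formulas force $a/b\ge p_{i,j}/q_{i,j}$ when $i$ is \emph{even} and $a/b\le p_{i,j}/q_{i,j}$ when $i$ is \emph{odd}, and they give $s_{i,j}=-p_{i,j}$, $t_{i,j}=+q_{i,j}$ for even $i$; equivalently, the $\pm$ convention in \eqref{eq:st_pq} should be tied to $\pm(a/b-p_{i,j}/q_{i,j})>0$, not $<0$. That is the opposite of what the observation literally states, and your formulas (not the printed observation) agree with Table~\ref{tab:eea} --- e.g.\ at $i=1$, $s_{1,1}=1=p_{1,1}$ and $t_{1,1}=-1=-q_{1,1}$. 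The mismatch is a sign typo in the statement as printed, but a proof proposal should notice and record the discrepancy rather than declare the first bullet "pinned down." Second, for the third bullet your detour through $\lfloor(q_{i-2}\delta_{i-2})/(q_{i-1}\delta_{i-1})\rfloor$ and the identity $p_{i-1}q_{i-2}-p_{i-2}q_{i-1}=\pm1$ is more roundabout than necessary, and you have not actually exhibited the claimed "short computation" --- as sketched it risks circularly invoking $r_i=r_{i-2}-a_ir_{i-1}$. The standard and shorter route is that $r_{i-2}/r_{i-1}$ equals the complete quotient $x^*_{i-1}$ (a one-line induction from $x^*_{i}=1/(x^*_{i-1}-a_i)$), so the paper's integer quotient $j'=\lfloor r_{i-2}/r_{i-1}\rfloor$ is $\lfloor x^*_{i-1}\rfloor=a_i$ by definition.
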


Another helpful way to express these relationships is to rethink \eqref{eq:EuclidanAlg} for the slow remainders as
\eq\label{eq:r_pq}
r_{i,j} = bt_{i,j}( a/b + s_{i,j}/t_{i,j}) = b|t_{i,j}|\left|a/b - |s_{i,j}/t_{i,j}|\right| = b|t_{i,j}|\delta_{i,j},
\eeq
keeping in mind that $s_{i,j}$ and $t_{i,j}$ have opposite sign.

\begin{table}
\[
 \begin{array}{ |c | c || c| c | c | c || c | c |}
 \hline
  i & j & a_{i} & r_{i,j} & s_{i,j} & t_{i,j} & \uv & \vv\\
  \hline
  \hline
   -1 & 1 &  & 71 & 1 & 0 &  & \\
   \hline
   0 & 1 &  & \mathbf{51} & 0 & \mathbf{1} & \uv_1 = (1,51) & \\
   \hline
   1 & 1 & 1 & \mathbf{20} & 1 & \mathbf{-1} & & \vv_1 = (1,-20) \\
   \hline
   2 & 1 &  & \mathbf{31} & -1 & \mathbf{2} & \uv_2 = (2,31) & \\
    & 2 & 2 & \mathbf{11} & -2 & \mathbf{3} & \uv_3 = (3,11) & \\
   \hline
   3 & 1 & 1 & \mathbf{9} & 3 & \mathbf{-4} & & \vv_2 = (4,-9) \\
   \hline
   4 & 1 & 1 & \mathbf{2} & -5 & \mathbf{7} & \uv_4 = (7,2) & \\
   \hline
   5 & 1 & & \mathbf{7} & 8 & \mathbf{-11} & & \vv_3=(11,-7) \\
    & 2 &  & \mathbf{5} & 13 & \mathbf{-18} & & \vv_4 = (18,-5)\\
    & 3 & & \mathbf{3} & 18 & \mathbf{-25} & & \vv_5 = (25,-3) \\
    & 4 & 4 & \mathbf{1} & 23 & \mathbf{-32} & & \vv_6 = (32,-1)\\
    \hline
    6 & 1 &  & \mathbf{1} & -28 & \mathbf{39} & \uv_5 =(39,1) & \\
     & 2 & 2 & 0 & -51 & 71 & & \\
     \hline
 \end{array}
\]
\caption{The steps of the slow Euclidean algorithm give unit lattice vectors.}\label{tab:eea}
\end{table}

\subsection{Unit lattice vectors}

Let $\uv_1, \uv_2, \ldots, \uv_d$ denote the vectors $\la t_{i,j}, r_{i,j}\ra$, such that $t_{i,j}>0$ are listed in order of appearance in the slow Euclidean algorithm. Similarly, denote the vectors $-\la t_{k,l}, r_{k,l} \ra$, with $t_{k,l} < 0$, by $\vv_1, \vv_2, \ldots, \vv_e$. The set $U=\{ \uv_1, \uv_2, \ldots, \uv_d\}$ is the set of positively sloped pairs of this type, while, $V=\{ \vv_1, \vv_2, \ldots, \vv_e\}$ is the collection of such negatively sloped vectors. We call the vectors in these sets \emph{unit lattice vectors}, since we will prove they are precisely those vectors with lattice length 1.

By construction (since generally speaking the $r_{i,j}$ decrease while the $t_{i,j}$ increase) we have their slopes in decreasing order of magnitude:
\[
 m_{\uv_1} > m_{\uv_2} > \cdots > m_{\uv_d} > 0 > m_{\vv_e} > \cdots > m_{\vv_2} > m_{\vv_1}.
\]
So long as $a < b$, the two steepest vectors are $\uv_1 = \la 1, a\ra$ and $\vv_1 = \la -1, b-a\ra$. Also, notice that each consecutive difference is $\uv_{i+1} - \uv_i =- \la t_{i'}, r_{i'} \ra$ for some $i'$. That is, each difference is a simple $(t,r)$-pair, appearing at the bottom of the block of the algorithm just above $\uv_{i+1}$. This fact can be restated as $\uv_{i+1} - \uv_i = \vv_j$ for some $j$. Likewise, the differences between consecutive entries in $V$ are simple $(t,r)$-pairs from set $U$: $\vv_{k+1}-\vv_k =  \la t_{k'}, r_{k'} \ra = \uv_l$.

In Table \ref{tab:eea} we have the steps of the slow Euclidean algorithm for $a=51$ and $b=71$, with the vectors in sets $U$ and $V$ identified.

\subsection{Basis pairs from the Euclidean algorithm}

It is well known that vectors $\xv=\la x_1, x_2 \ra$ and $\yv = \la y_1, y_2 \ra$ form an integer basis for $L_{a,b}$ if and only if the area of the parallelogram spanned by $\xv$ and $\yv$ is $b$, i.e., if:
\[
 \det\left( \begin{array}{cc} x_1 & y_1 \\ x_2 & y_2 \end{array} \right) = x_1y_2 - y_1x_2 = \pm b.
\]
It so happens that many pairs  of vectors $\langle t_{i,j}, r_{i,j}\rangle$ in the extended Euclidean algorithm have this property.

We begin with a helpful observation, which allows us to focus on the coefficients $s_{i,j}$ and $t_{i,j}$.

\begin{obs}\label{obs:reduce}
 If $r = s b + t a$ and $r' = s'b +t'a$, then 
 \[
 \det\left( \begin{array}{cc} t & t' \\ r & r' \end{array}\right) = (ts'-t's)b,
 \]
 and thus $\{ \la t,r \ra, \la t',r' \ra\}$ is a basis for $L_{a,b}$ if and only if 
 \[
 \det \left(\begin{array}{cc} t & t' \\ s & s'\end{array}\right) = \pm 1.
 \]
\end{obs}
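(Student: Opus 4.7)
The proof is a short, two-step computation that plugs the hypothesized relations into the $2\times 2$ determinant and then appeals to the well-known basis criterion recalled just before the statement. First, I would simply expand the determinant. Writing $\det\bigl(\begin{smallmatrix} t & t' \\ r & r'\end{smallmatrix}\bigr) = tr' - t'r$ and substituting $r = sb + ta$ and $r' = s'b + t'a$, the $a$-terms cancel:
\[
tr' - t'r = t(s'b + t'a) - t'(sb + ta) = (ts' - t's)b + (tt' - t't)a = (ts' - t's)b.
\]
This gives the first claimed identity as an immediate algebraic consequence of the fact that both $\la t, r\ra$ and $\la t', r'\ra$ lie in the lattice via the same $a,b$ expansion.

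For the second claim, I would cite the criterion stated in the paragraph preceding the observation: two vectors $\xv, \yv$ form an integer basis of $L_{a,b}$ precisely when the absolute value of their determinant equals $b$. Combining this with the identity just established, $\{\la t,r\ra, \la t',r'\ra\}$ is a basis for $L_{a,b}$ if and only if $(ts' - t's)b = \pm b$, which, since $b \neq 0$, is equivalent to $ts' - t's = \pm 1$. But this last expression is exactly $\det\bigl(\begin{smallmatrix} t & t' \\ s & s'\end{smallmatrix}\bigr)$, finishing the proof. There is no real obstacle here; the only thing to double-check is that the reduction from the $(t,r)$-determinant to the $(t,s)$-determinant is valid, i.e., that $b \neq 0$, which is ensured by the standing assumption $1 \leq a \leq b$ with $a < b$ and $\gcd(a,b) = 1$ in the setup of the slow Euclidean algorithm.
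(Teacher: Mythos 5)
Your proof is correct and matches the paper's (implicit) argument exactly: expand $tr'-t'r$ using $r=sb+ta$, $r'=s'b+t'a$ so the $a$-terms cancel, then apply the determinant-equals-$\pm b$ basis criterion stated just before the observation. Nothing further is needed.
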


We now provide a collection of bases for $L_{a,b}$ that come from the Euclidean algorithm.

\begin{prop}[Nice basis pairs]\label{prp:basepairs}
Fix integers $a$ and $b$ with $1\leq a < b$. Then for each pair of indices $i$ and $j$ appearing in the slow Euclidean algorithm for $a$ and $b$, we have bases of the form
\[
 \{ \la t_i, r_i \ra, \la t_{i+1,j}, r_{i+1,j} \ra \}, \{ \la t_i, r_i \ra, \la t_{i+2,1}, r_{i+2,1} \ra \}, \mbox{ and } \{ \la t_{i,j}, r_{i,j} \ra, \la t_{i,j+1}, r_{i,j+1} \ra \}.
\]
In other words, the vector at the bottom of one block of the Euclidean algorithm forms a basis with each of the vectors in the next block, as well as the first vector of the block after that. Moreover, adjacent vectors within each block give bases. 

As especially nice cases, consecutive elements of $U$ and $V$ give bases. That is, $\{\uv_i, \uv_{i+1}\}$ is a basis for each $i = 1, \ldots, d-1$, and $\{\vv_j, \vv_{j+1}\}$ is a basis for each $j = 1, \ldots, e-1$.
\end{prop}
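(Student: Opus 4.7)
The plan is to reduce every basis claim to a single underlying identity about the $(s,t)$-coefficients of the slow Euclidean algorithm, then observe that all three listed types of pairings obey the same $2\times 2$ determinant recurrence and so produce a $\pm 1$ answer. First I would invoke Observation \ref{obs:reduce} to trade the area condition $tr' - t'r = \pm b$ for the simpler $ts' - t's = \pm 1$, eliminating the $r$-coordinates from the discussion.

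The core input is the identity $t_i s_{i+1} - t_{i+1} s_i = \pm 1$ for consecutive simple pairs, which I would prove by induction on $i$. The base case is the initialization $(s_{-1}, t_{-1}) = (1,0)$, $(s_0, t_0) = (0,1)$, which yields $-1$. For the inductive step, the simple recurrences $s_{i+1} = s_{i-1} - a_{i+1} s_i$ and $t_{i+1} = t_{i-1} - a_{i+1} t_i$ make the $a_{i+1}$-terms cancel on expansion; what remains is $-(t_{i-1} s_i - t_i s_{i-1})$, so the sign simply alternates at each step.

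Next I would dispatch the three proposed pair-types by the same mechanism. In each case the recurrence for the slow coefficients is linear in the block index $j$, and expanding the determinant causes the $j$-dependent terms to cancel in pairs. Concretely, for $\{\la t_i, r_i\ra, \la t_{i+1,j}, r_{i+1,j}\ra\}$ the computation is literally the inductive step with $j$ in place of $a_{i+1}$. For $\{\la t_i, r_i\ra, \la t_{i+2,1}, r_{i+2,1}\ra\}$, substituting $s_{i+2,1} = s_i - s_{i+1}$ and $t_{i+2,1} = t_i - t_{i+1}$ collapses the determinant to $\pm(t_i s_{i+1} - t_{i+1} s_i)$. And for $\{\la t_{i,j}, r_{i,j}\ra, \la t_{i,j+1}, r_{i,j+1}\ra\}$, a direct expansion using $s_{i,j} = s_{i-2} - j s_{i-1}$ and the analogous identity for $t$ yields $\pm(t_{i-2} s_{i-1} - t_{i-1} s_{i-2})$. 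Each answer is $\pm 1$ by the core identity.

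Finally, for the corollary about $U$ and $V$: the sign of $t_{i,j}$ alternates with the parity of $i$, so two consecutive vectors of $U$ (respectively $V$) are either both slow entries in a single block of the algorithm or the last entry of block $i$ paired with the first entry of block $i+2$. These are precisely the third and second cases above. Negating both coordinates as in the definition $\vv_k = -\la t_{k,l}, r_{k,l}\ra$ does not affect the absolute value of the determinant, so the $V$-case reduces to the $U$-case. The main obstacle, such as it is, is the two-subscript case in which both $i$ and $j$ move; the expansion is short but requires careful bookkeeping. The rest is an essentially mechanical application of Observation \ref{obs:reduce} combined with the telescoping identity.
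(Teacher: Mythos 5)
Your proposal is correct and takes essentially the same route as the paper: both reduce via Observation \ref{obs:reduce} to showing the $(s,t)$-determinants are $\pm 1$, and both exploit the fact that each slow-algorithm step merely adds a multiple of one vector to another, so the determinant telescopes (up to sign) back to the initialization $(s_{-1},t_{-1})=(1,0)$, $(s_0,t_0)=(0,1)$. The only organizational difference is that you isolate the consecutive-simple-pairs identity as a core lemma and reduce the three pair types to it (also spelling out the $U$/$V$ case explicitly), whereas the paper runs a single induction over all pairs at once.
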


\begin{proof}
By Observation \ref{obs:reduce}, it will suffice to show the relevant determinants of $(s,t)$-pairs have absolute value 1.

We make the elementary observation that for any two pairs $(s,t), (s',t')$ such that  
\[
 \det\left(\begin{array}{cc} t & t' \\ s& s'\end{array} \right) = ts'-t's=\pm 1,
\]
then
\[
 \det\left(\begin{array}{cc} t & t' + jt\\ s& s'+js\end{array} \right) = ts'+jts- t's-jts = ts'-t's = \pm 1.
\]
In other words, if $\{ \xv, \yv\}$ form a basis, then $\{ \xv, \yv+j\xv\}$ also forms a basis.

The result now follows by induction, with base cases of $\{ \la 0, b \ra, \la 1, a \ra\}$ and $\{ \la 1, a\ra, \la -1, b-a \ra \}$, since

\[
\{ \la t_i, r_i \ra, \la t_{i+1,j}, r_{i+1,j} \ra \} = \{ \la t_i, r_i \ra, \la t_{i-1}, r_{i-1}\ra-j\la t_i, r_i \ra \},
\]

\[
\{ \la t_i, r_i \ra, \la t_{i+2,1}, r_{i+2,1} \ra \} = \{ \la t_i, r_i \ra, \la t_{i+1}, r_{i+1 }\ra - \la t_i, r_i \ra\}, 
\]
and 
\[
 \{ \la t_{i,j}, r_{i,j} \ra, \la t_{i,j+1}, r_{i,j+1} \ra \} = \{ \la t_{i,j}, r_{i,j} \ra, \la t_{i,j}, r_{i,j} \ra - \la t_{i-1}, r_{i-1} \ra \}.
\]
\end{proof}

Our next goal is to prove that the $(t,r)$-pairs appearing in the Euclidean algorithm give not only nice basis pairs for $L$, but that they are also the vectors of lattice length one.

\subsection{Characterizing unit lattice vectors}

We will now show that vectors in $U \cup V$, i.e., those $\la t_{i,j}, r_{i,j} \ra$ that come from the slow Euclidean algorithm, are the vectors of lattice length one in $L_{a,b}$.

\begin{prop}[Unit lattice length vectors]\label{prop:units}
Let $\xv$ be an increasing lattice vector and let $\yv$ be a decreasing lattice vector. Then $\xv \in U$ if and only if $\ell^+(\xv) = 1$. Likewise, $\yv \in V$ if and only if $\ell^-(\yv) =1$.
\end{prop}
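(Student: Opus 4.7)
By Observation \ref{obs:suff1}, the proposition reduces to showing $\shad(\xv) = \emptyset$ if and only if $\xv \in U$ for any increasing lattice vector $\xv$ (and analogously $\shad(\yv) = \emptyset$ iff $\yv \in V$ for decreasing $\yv$, which follows by the symmetric argument obtained by reflecting the lattice across the $x$-axis). I therefore focus on the increasing case.

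For the forward direction ($\uv_k \in U \Rightarrow \shad(\uv_k) = \emptyset$), the case $k = 1$ is immediate since $[0,1] \times [0,a]$ contains only $\mathbf{0}$ and $\uv_1 = \la 1, a\ra$ as lattice points. For $k \geq 2$, Proposition \ref{prp:basepairs} supplies the basis $\{\uv_{k-1}, \uv_k\}$, so any $\pv \in \shad(\uv_k)$ has a unique expansion $\pv = m \uv_{k-1} + n \uv_k$ for integers $m, n$. The shadow constraints
\[
0 \leq m t_{\uv_{k-1}} + n t_{\uv_k} \leq t_{\uv_k}, \qquad 0 \leq m r_{\uv_{k-1}} + n r_{\uv_k} \leq r_{\uv_k},
\]
combined with $0 < t_{\uv_{k-1}} < t_{\uv_k}$ and $r_{\uv_{k-1}} > r_{\uv_k} > 0$, admit only the integer solutions $(m,n) = (0,0)$ and $(0,1)$, giving $\pv = \mathbf{0}$ or $\pv = \uv_k$ (both excluded from $\shad(\uv_k)$). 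A brief case analysis handles the remaining values of $n$: when $n \geq 2$, the inequalities force $m$ into the open interval $(-n, 1-n)$; when $n \leq -1$, into $(|n|, 1+|n|)$; both are integer-free.

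For the reverse direction ($\xv \notin U$, nonzero and increasing $\Rightarrow \shad(\xv) \neq \emptyset$), write $\xv = \la t, r\ra$. Since distinct lattice points have distinct $t$-coordinates (as $\gcd(a,b) = 1$) and $\uv_1 = \la 1, a\ra$, we have $t > 1$. Let $k$ be the largest index with $t_{\uv_k} < t$. The plan is to show $\uv_k \in \shad(\xv)$, which reduces to $r > r_{\uv_k}$. The case $k = d$ is handled directly: $r_{\uv_d} = 1$, and $t \neq t_{\uv_d}$ forces $r \neq 1$, so $r \geq 2 > 1 = r_{\uv_d}$. For $k < d$, I rule out the alternatives by contradiction: if $r \leq r_{\uv_{k+1}}$, then $\xv$ lies in the (by the already-proved forward direction, empty) shadow of $\uv_{k+1}$; if $r = r_{\uv_k}$, uniqueness of the $t$-coordinate forces $t = t_{\uv_k}$, contradicting $t > t_{\uv_k}$; and if $r_{\uv_{k+1}} < r < r_{\uv_k}$, then $\xv$ sits in the open rectangle $(t_{\uv_k}, t_{\uv_{k+1}}) \times (r_{\uv_{k+1}}, r_{\uv_k})$, which by a slope comparison (using $r_{\uv_{k+1}}/t_{\uv_{k+1}} < r_{\uv_k}/t_{\uv_k}$) is contained in the open fundamental parallelogram spanned by the basis $\{\uv_k, \uv_{k+1}\}$ from Proposition \ref{prp:basepairs}, and thus contains no lattice point.

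The principal obstacle is the two case analyses: the forward direction's exhaustion of the integer $(m, n)$ consistent with all four shadow inequalities, and the reverse direction's geometric verification that the open rectangle sits strictly inside the fundamental parallelogram. The key structural input in both directions is that consecutive $U$-vectors form a basis (Proposition \ref{prp:basepairs}), and that their $t$- and $r$-coordinates have opposite monotonicity.
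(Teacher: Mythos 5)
Your proof is correct, and it rests on the same two pillars as the paper's argument --- Observation \ref{obs:suff1} (empty shadow $\Leftrightarrow$ lattice length one) and the fact from Proposition \ref{prp:basepairs} that consecutive elements of $U$ form a basis --- but the execution is genuinely different, especially in the converse direction. For the forward direction the paper argues by induction with a slope/WLOG step that places a hypothetical shadow point of $\uv_{i+1}$ inside the fundamental parallelogram of $\{\uv_i,\uv_{i+1}\}$; you instead expand the shadow point in the basis $\{\uv_{k-1},\uv_k\}$ and grind out the integer coefficients from the four shadow inequalities and the opposite monotonicity of the $t$- and $r$-coordinates --- same underlying fact (no lattice points in the open fundamental domain), algebraic rather than geometric bookkeeping, and your case analysis of $(m,n)$ checks out. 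The real divergence is the converse: the paper brackets the \emph{slope} of $\xv$ between consecutive $\uv_i,\uv_{i+1}$ and shows $\xv=r\uv_i+s\uv_{i+1}$ with $r,s\ge 0$, which immediately gives $\ell^+(\xv)\ge r+s\ge 2$ unless $\xv$ is a unit vector; you bracket the \emph{first coordinate} and exhibit an explicit point of $\shad(\xv)$, namely $\uv_k$ for the largest $k$ with $t_{\uv_k}<t$, via a trichotomy on $r$ that reuses the forward direction and the emptiness of the open rectangle inside the parallelogram spanned by $\uv_k,\uv_{k+1}$ (your slope-comparison claim there is correct: with $t_{\uv_k}<t_{\uv_{k+1}}$ and $r_{\uv_k}>r_{\uv_{k+1}}>0$, all four barycentric coordinates land strictly in $(0,1)$). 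What each buys: the paper's converse produces, as a byproduct, the nonnegative expansion of an arbitrary increasing vector in a slope-bracketing unit basis, which the paper explicitly reuses later (Observation \ref{obs:Pi} and Proposition \ref{prp:lengthk} cite ``in the proof of Proposition \ref{prop:units}''), so substituting your proof would require re-deriving that expansion separately; your route keeps everything in terms of shadows and makes the forward direction fully explicit. Two small caveats, at the same level of informality as the paper's own treatment: the claim that distinct lattice points have distinct $t$-coordinates is only valid inside the strip $0\le y\le b$ (in the full lattice $(i,y)$ and $(i,y+b)$ are distinct), which is the intended setting, and vectors along the axes such as $\la 0,b\ra$ or $\la b,0\ra$ are implicitly excluded by both you and the paper; also, your reduction of the $V$ statement to the $U$ statement by reflecting across the $x$-axis tacitly uses that this reflection carries $V$ for $L_{a,b}$ onto $U$ for $L_{b-a,b}$ --- true, but it deserves a sentence, or one can simply rerun the mirrored argument as the paper does.
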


\begin{proof}
We will prove only the statement for the increasing vectors, since the argument is identical in the decreasing case.

To begin we prove that each element $\uv \in U$ has $\ell^+(\uv) = 1$. By Observation \ref{obs:suff1}, it is enough to show that $\shad(\uv_i) = \emptyset$ for all $i$.
Since $\uv_1 = \la 1, a \ra$, it is trivially true that $\shad(\uv_1) = \emptyset$.  Now we will show that if $\uv_i$ has an empty shadow, then $\uv_{i+1}$ has an empty shadow as well. 

Suppose for a contradiction that $\shad(\uv_i) = \emptyset$ but there is a point $(x,y) \in \shad(\uv_{i+1})$. Then either the vector $\xv = \la x, y\ra$ or the vector $\uv_{i+1}-\xv =\la t-x, r-y \ra$ is a vector with slope at least $m_{\uv_{i+1}}$. Without loss of generality, suppose $m_{\xv} \geq m_{\uv_{i+1}}$.  But if this is the case, then the lattice point $(x,y)$ lies in the parallelogram spanned by $\uv_i$ and $\uv_{i+1}$, contradicting the fact that, by Proposition \ref{prp:basepairs}, $\{ \uv_i, \uv_{i+1}\}$ is a basis. 

Hence, we must conclude $\shad(\uv_{i+1}) = \emptyset$, as desired. This proves the first implication.

We now argue for the converse, that if $\xv \notin U$, then $\ell^+(\uv) > 1$. Suppose $\xv$ is an increasing lattice vector. Then for some $i$, $m_{\uv_i} \geq m_{\xv} \geq m_{\uv_{i+1}}$. Since $\{ \uv_i, \uv_{i+1}\}$ is an integer basis, we know $\xv = r\uv_i + s\uv_{i+1}$ for integers $r$ and $s$ where at least one of $r$ or $s$ must be positive since $\xv$ is an increasing lattice vector.

If  $s<0 <r$, then $m_{\xv} > m_{\uv_i}$, a contradiction. Similarly, if $r<0<s$, then $m_{\uv_{i+1}} > m_{\xv}$. Thus both $r$ and $s$ are nonnegative, and $\ell^+(\xv)\geq r+s\geq 1$. If $r=0$ and $s=1$ or vice-versa, then $\xv \in \{ \uv_i, \uv_{i+1}\} \subset U$. Otherwise, $\ell^+(\xv) > 1$, as desired.  
\end{proof}

\subsection{Length in terms of unit vectors}

Having established that the vectors $U\cup V$ are precisely those vectors in $L_{a,b}$ with lattice length one, we now describe the length of any lattice vector in terms of these. To facilitate discussion, throughout the remainder of the section we will focus on increasing vectors only. Similar ideas will yield analogous results for decreasing vectors. 

Let $P=\bigcup_{i=1}^d P_i$ denote the union of the triangles $P_i =\{ \lam \uv_i + \mu \uv_{i+1} : \lam, \mu \in [0,1] \}$. (Note $\lambda, \mu$ are real numbers in this construction.) The boundary of $P$ is the union of segments in the first quadrant given by 
\[
\partial P = \{ \lam \uv_i + (1-\lam)\uv_{i+1} : 0\leq i \leq d, \lam \in [0,1] \},
\]
where $\uv_0=\uv_{d+1} = \mathbf{0}$. Apart from the zero vector, every lattice vector in $P$ has lattice length 1 by Proposition \ref{prop:units}.  Let $kP$ denote the dilation of $P$ by $k$ units. Then
\[
 \partial (kP) = \{ \lam \uv_i + (k-\lam)\uv_{i+1} : 0\leq i \leq d, \lam \in [0,k] \}.
\]

In the proof of Proposition \ref{prop:units}, we showed that if $\xv$ is an increasing vector, then there is an integer $i$ such that $m_{\uv_i} \geq m_{\xv} \geq m_{\uv_{i+1}}$, and moreover we can write $\xv = r\uv_i + s\uv_{i+1}$ for nonnegative integers $r$ and $s$. In other words, this means $\xv \in \partial(kP_i)$, where $k=r+s$.

\begin{obs}\label{obs:Pi}
Every increasing vector $\xv$ lies in $\partial(kP_i)$ for some $i$ and $k$.
\end{obs}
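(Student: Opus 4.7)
The plan is to unpack the definition of $\partial(kP_i)$ and invoke facts already established in the proof of Proposition \ref{prop:units}, so that the observation is essentially a repackaging of what is already known.

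First I would dispose of the trivial case $\xv = \mathbf{0}$, which lies on $\partial(0 \cdot P_i)$ for any $i$. For $\xv \neq \mathbf{0}$, observe that the only lattice points in $[0,b) \times [0,b)$ with a zero coordinate are congruent to $\mathbf{0}$ (if $ai \equiv 0 \pmod b$ then $\gcd(a,b)=1$ forces $i \equiv 0 \pmod b$, and symmetrically for the other coordinate), so both coordinates of $\xv$ are strictly positive and $0 < m_{\xv} < \infty$. Using the strict slope ordering $m_{\uv_1} > m_{\uv_2} > \cdots > m_{\uv_d} > 0$, I would locate an index $i \in \{1,\ldots,d-1\}$ such that
\[
m_{\uv_i} \geq m_{\xv} \geq m_{\uv_{i+1}}.
\]

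Next, by Proposition \ref{prp:basepairs}, the pair $\{\uv_i, \uv_{i+1}\}$ is an integer basis for $L_{a,b}$, so there exist integers $r,s$ with $\xv = r\uv_i + s\uv_{i+1}$. The same slope argument used in the second part of the proof of Proposition \ref{prop:units} forces $r, s \geq 0$: if $s < 0 < r$, then $r\uv_i + s\uv_{i+1}$ would have slope strictly greater than $m_{\uv_i}$, contradicting $m_{\xv} \leq m_{\uv_i}$; the case $r < 0 < s$ is ruled out symmetrically. Since $\xv \neq \mathbf{0}$, we have $k := r + s \geq 1$.

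Finally, I would rewrite $\xv = r\uv_i + (k-r)\uv_{i+1}$ and compare against the parameterization
\[
\partial(kP_i) = \{\lambda \uv_i + (k-\lambda)\uv_{i+1} : \lambda \in [0,k]\}
\]
with $\lambda = r \in [0,k]$, concluding $\xv \in \partial(kP_i)$. No substantive obstacle arises here; the real work is contained in Propositions \ref{prp:basepairs} and \ref{prop:units}, and this observation simply recasts that pointwise information as the statement that the dilations $kP$ collectively sweep out every increasing lattice vector on their boundaries, which is the form in which it will be used in the sequel.
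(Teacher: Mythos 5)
Your proposal is correct and follows essentially the same route as the paper: the paper's justification for this observation is precisely the remark that, as in the proof of Proposition \ref{prop:units}, one finds $i$ with $m_{\uv_i} \geq m_{\xv} \geq m_{\uv_{i+1}}$, writes $\xv = r\uv_i + s\uv_{i+1}$ with nonnegative integers $r,s$ using the basis property from Proposition \ref{prp:basepairs}, and sets $k = r+s$. Your additional handling of the zero vector and of vectors with a zero coordinate is harmless extra detail, not a different argument.
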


We now show that the dilation of $P$ gives us a way to understand lattice length.

\begin{prop}\label{prp:lengthk}
For any integer $k$, $\ell^+(k\uv_1) = \ell^+(k\uv_d) = k$. If $\xv \in \partial(kP)\cap L_{a,b}$, and if $\xv$ is not a multiple of $\uv_1$ or $\uv_d$, then $\ell^+(\xv)=k$. In particular, if $\xv \in \partial(kP_i)$, then there exist nonnegative integers $c$ and $d$ such that $\xv= c\uv_i + d\uv_{i+1}$ and
 \[
  \ell^+(\xv) = c+d.
 \]
\end{prop}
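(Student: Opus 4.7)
The plan is to combine an explicit walk (for the lower bound) with a linear functional argument (for the upper bound). First I would observe that Proposition \ref{prp:basepairs} gives $\{\uv_i, \uv_{i+1}\}$ as an integer basis of $L_{a,b}$, so any $\xv \in \partial(kP_i) \cap L_{a,b}$, lying on the segment from $k\uv_{i+1}$ to $k\uv_i$, has nonnegative integer coordinates $c, d$ in this basis with $c + d = k$, giving $\xv = c\uv_i + d\uv_{i+1}$. The lower bound $\ell^+(\xv) \geq k$ is then immediate: the walk consisting of $c$ copies of $\uv_i$ followed by $d$ copies of $\uv_{i+1}$ is an increasing walk of length $k$ terminating at $\xv$. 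The extremal cases $\xv = k\uv_1$ and $\xv = k\uv_d$ are handled by taking $(c,d)=(k,0)$ and $(c,d)=(0,k)$ respectively.

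For the upper bound, I would introduce the linear functional $\phi_i(\yv) = \mathbf{n}_i \cdot \yv / b$, where $\mathbf{n}_i = (r_i - r_{i+1}, t_{i+1} - t_i)$ is the outward normal to the line through $\uv_i$ and $\uv_{i+1}$. Both components of $\mathbf{n}_i$ are strictly positive since the slow Euclidean algorithm yields strictly increasing $t$-values and strictly decreasing $r$-values across $\uv_1, \ldots, \uv_d$; and a short calculation using $\det(\uv_i, \uv_{i+1}) = \pm b$ (from the basis property) gives $\phi_i(\uv_i) = \phi_i(\uv_{i+1}) = 1$. The key lemma I would establish is that $\phi_i(\yv) \geq 1$ for every nonzero increasing lattice vector $\yv \in L_{a,b}$: expanding $\yv = p\uv_i + q\uv_{i+1}$ in the basis and applying linearity yields $\mathbf{n}_i \cdot \yv = (p+q)b$, so this dot product is always an integer multiple of $b$. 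Since $\mathbf{n}_i$ has strictly positive entries and $\yv$ is nonzero with nonnegative entries, $\mathbf{n}_i \cdot \yv > 0$, forcing this multiple to be at least $b$.

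Given the lemma, the upper bound follows by linearity in a single line: for any increasing walk $(\xv_1, \ldots, \xv_m) \in \wk^+(\xv)$, we have $k = \phi_i(\xv) = \sum_l \phi_i(\xv_l) \geq m$, so $\ell^+(\xv) \leq k$. The main obstacle is the key lemma; once one notices that the basis property quantizes $\mathbf{n}_i \cdot (\cdot)$ to multiples of $b$, strict positivity of $\mathbf{n}_i$ on the first quadrant does the rest, and the lower and upper bounds coincide at $k$.
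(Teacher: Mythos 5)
Your proposal is correct, and while the lower bound (the explicit walk made of $c$ copies of $\uv_i$ followed by $d$ copies of $\uv_{i+1}$) coincides with the paper's, your upper bound travels a genuinely different route. The paper argues by induction on $k$: it refines an arbitrary increasing walk into unit steps, uses $\ell^+(\xv)=1+\max\{\ell^+(\xv-\uv):\uv\in U,\ \xv-\uv\in L_{a,b}\}$, and invokes the geometric fact that $\xv-\uv$ lies on $\partial(jP)$ for some $j\le k$ (with Proposition \ref{prop:units} as the base case and Observation \ref{obs:Pi} doing the bookkeeping). You instead produce a dual certificate: the outward normal functional $\phi_i(\cdot)=\mathbf{n}_i\cdot(\cdot)/b$ to the edge of $P$ through $\uv_i,\uv_{i+1}$. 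Your key point --- that the basis property of $\{\uv_i,\uv_{i+1}\}$ from Proposition \ref{prp:basepairs} quantizes $\mathbf{n}_i\cdot\yv$ to integer multiples of $b$ for every lattice vector $\yv$, while strict positivity of both entries of $\mathbf{n}_i$ makes this quantity positive on every nonzero increasing vector --- forces $\phi_i\ge 1$ on every admissible step, so any walk ending at $\xv$ has at most $\phi_i(\xv)=c+d=k$ steps. This is a clean linear-programming-duality style argument (very much in the spirit of the Boyd--Steele integer program the paper alludes to) that dispenses with the induction and with refining walks into unit vectors; the paper's induction, in exchange, stays inside the shadow/dilation machinery it has already built and leads naturally to the shell picture behind Corollary \ref{cor:simpleangle}. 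Two minor points worth making explicit: strict positivity of $\mathbf{n}_i$ holds because $\uv_{i+1}-\uv_i$ is always some $\vv_j$, with positive first and negative second component; and your construction presupposes two consecutive unit vectors, so the degenerate case $d=1$ (i.e.\ $a=1$, where $U=\{\la 1,1\ra\}$ and the claim is an easy direct check) should be disposed of separately.
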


See Figure \ref{fig:ablatticelength} for an illustration of this result for $a = 51$ and $b=71$.

\begin{proof} 
Since $\uv_1$ has the greatest slope among the increasing unit vectors, multiples of $\uv_1$ have only one increasing path with unit lattice vectors, namely, $\uv_1, \uv_1, \ldots, \uv_1$. Similarly, there is only one increasing path to any multiple of $\uv_d$ since it has the smallest slope. This proves $\ell^+(k\uv_1) = \ell^+(k\uv_d) = k$ as claimed.
 
 We now turn to the points on $\partial (kP)$ that are not multiples of $\uv_1$ or $\uv_d$. We proceed by induction, with the base case of $k=1$ true by Proposition \ref{prop:units}.
 
 Now suppose the claim is true for some integer $k\geq 1$. Let $\xv \in \partial((k+1)P) \cap L_{a,b}$. By Observation \ref{obs:Pi}, there is an $i$ such that $\xv \in \partial((k+1)P_i) \cap L_{a,b}$. Then for some integer $c$ with $0\leq c\leq k+1$, 
\[
\xv = c\uv_i + (k+1-c)\uv_{i+1}.
\]
This shows that $\ell^+(\xv) \geq k+1$.
 
Now, in general, any path in $\wk^+(\xv)$ can be refined into a path consisting only of unit lattice vectors, and we can see
 \[
  \ell^+(\xv) = 1 + \max\{ \ell^+(\xv-\uv) : \uv \in U \mbox{ and } \xv-\uv \in L_{a,b}\}.
 \] 
Now for any $\uv$ such that $\xv-\uv \in L_{a,b}$, the vector $\xv-\uv$ must lie in the interior of $(k+1)P$ since it is in the shadow of $\xv$. Again by Observation \ref{obs:Pi}, this means $\xv-\uv \in \partial(jP)$ for some $j \leq k$, and hence by induction $\ell^+(\xv-\uv)=j \leq k$. Therefore 
\[
 \ell^+(\xv) \leq 1 + k.
\]

Together with our earlier inequality, we have proved the desired result:
\[
 \ell^+(\xv) = k+1.
\]
\end{proof}

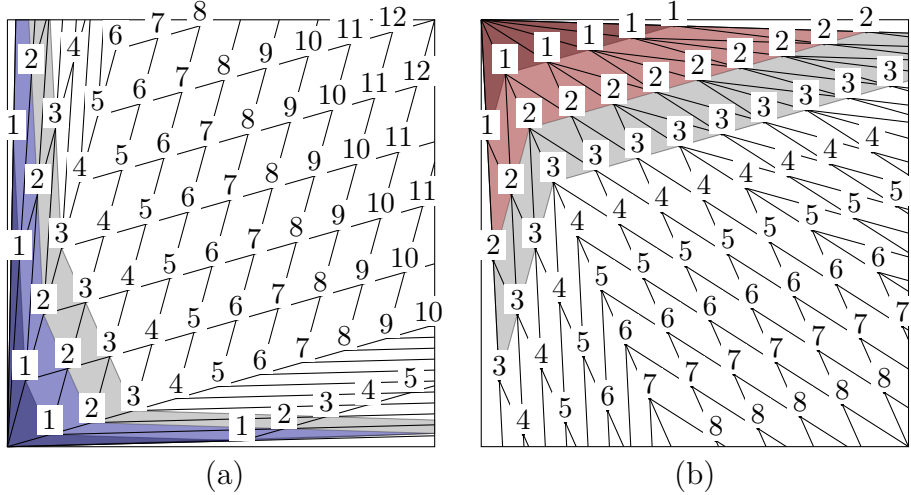
\begin{figure}
\[
\begin{array}{cc}
\begin{tikzpicture}[scale=.08]
\draw[fill=black, opacity=.6] (0,0)--(1,51)--(2,31)--(3,11)--(7,2)--(39,1)--(0,0);
\draw[fill=blue, opacity=.4] (0,0)--(1*71/51,71)--(6 - 1*49/20, 71)--(6,22)--(14,4)--(14+32*57/32, 4-1*57/32)--(39,1)--(0,0);
\draw[fill=gray, opacity=.4] (0,0)--(1*71/51,71)--(9 - 1*38/20, 71)--(9,33)--(21,6)--(21+32*50/32, 6-1*50/32)--(14+32*57/32, 4-1*57/32)--(0,0);
\draw (0,0)-- (71*1/51,71);
\draw (0,0)-- (71*2/31,71);
\draw (3,11)--(3 + 2*60/31, 11+60);
\draw (6,22)--(2*3 + 2*49/31, 2*11+31*49/31);
\draw (3*3,3*11)--(3*3 + 2*38/31, 3*11+31*38/31);
\draw (4*3,4*11)--(4*3 + 2*27/31, 4*11+31*27/31);
\draw (5*3,5*11)--(5*3 + 2*16/31, 5*11+31*16/31);
\draw (6*3,6*11)--(6*3 + 2*5/31, 6*11+31*5/31);
\draw (0,0)--(3*71/11, 71);
\draw (2,31)--(2 + 3*40/11, 31+11*40/11);
\draw (4,62)--(4 + 3*9/11, 62+11*9/11);
\foreach \x in {1,...,8}{
\draw (7*\x,2*\x)--(7*\x + 3*71/11-6*\x/11, 71);
}
\draw (7*9,2*9) --(7*9+3*8/3, 2*9+11*8/3);
\draw (0,0)--(71,2*71/7);
\foreach \x in {1,...,4}{
\draw (3*\x,11*\x)--(71, 11*\x + 2*71/7-6*\x/7 );
}
\draw (3*5, 11*5)--(3*5 + 7*16/2 , 71);
\draw (3*6, 11*6)--(3*6 + 2*7, 11*6+2*2);
\foreach \x in {1,...,10}{
\draw (7*\x,2*\x)--(71, 2*\x + 71/39-7*\x/39 );
}
\draw (39,1)--(71, 1+2*32/7);
\draw (0,0)--(71,71/39);
\draw (0,71)--(0,0)--(71,0)--(71,71)--(0,71);
\draw (1,51) node[above,inner sep=2,fill=white] {\small $1$};
\draw (2,31) node[above,inner sep=2,fill=white] {\small $1$};
\foreach \x in {1,...,6}{
\draw (3*\x,11*\x) node[above,inner sep=2, fill=white] {\small $\x$};
}
\foreach \x in {1,...,10}{
\draw (7*\x,2*\x) node[above,inner sep=2, fill=white] {\small $\x$};
}
\foreach \x in {2,...,10}{
\draw (-4+7*\x,9+2*\x) node[above,inner sep=2, fill=white] {\small $\x$};
}
\foreach \x in {3,...,11}{
\draw (-8+7*\x,18+2*\x) node[above,inner sep=2, fill=white] {\small $\x$};
}
\foreach \x in {4,...,11}{
\draw (-12+7*\x,27+2*\x) node[above,inner sep=2, fill=white] {\small $\x$};
}
\foreach \x in {5,...,12}{
\draw (-16+7*\x,36+2*\x) node[above,inner sep=2, fill=white] {\small $\x$};
}
\foreach \x in {6,...,12}{
\draw (-20+7*\x,45+2*\x) node[above,inner sep=2, fill=white] {\small $\x$};
}
\foreach \x in {7,...,8}{
\draw (-24+7*\x,54+2*\x) node[above,inner sep=2, fill=white] {\small $\x$};
}
\foreach \x in {2,...,5}{
\draw (32+7*\x,-1+2*\x) node[above,inner sep=2, fill=white] {\small $\x$};
}
\draw (39,1) node[above,inner sep=2, fill=white] {\small $1$};
\draw (5,42) node[above,inner sep=2, fill=white] {\small $2$};
\draw (4,62) node[above,inner sep=2, fill=white] {\small $2$};
\draw (8,53) node[above,inner sep=2, fill=white] {\small $3$};
\draw (11,64) node[above,inner sep=2, fill=white] {\small $4$};
\end{tikzpicture}
&
\begin{tikzpicture}[scale=.08]
\draw (0,0)--(71,0)--(71,-71)--(0,-71)--(0,0);
\foreach \x in {1,...,70}{
 \draw[fill=black] (\x, {mod(\x*51,71)-71} ) circle (5pt);
}
\draw[fill=black, opacity=.6] (0,0)--(1,-20)--(4,-9)--(11,-7)--(18,-5)--(25,-3)--(32,-1)--(0,0);
\draw[fill=red, opacity=.4] (0,0)--(2,-40)--(8, -18)--(22,-14)--(36,-10)--(50, -6)--(64,-2)--(0,0);
\draw[fill=gray, opacity=.4] (0,0)--(3,-60)--(12,-27)--(33,-21)--(54,-15)--(71,-15 + 17/7*2)--(71, -1*71/32 )--(0,0);
\draw (0,0)--(71/20*1,-71);
\draw (0,0)--(4*71/9,-71);
\draw (0,0)--(71,-7*71/11);
\draw (0,0)--(71,-5*71/18);
\draw (0,0)--(71,-3*71/25);
\draw (0,0)--(71,-1*71/32);
\draw (4,-9)--(4+62/20,-71);
\draw (8,-18)--(8+53/20 ,-71);
\draw (12,-27)--(12+44/20 ,-71);
\draw (16,-36)--(16+35/20 ,-71);
\draw (20,-45)--(20+26/20 ,-71);
\draw (24,-54)--(24+17/20 ,-71);
\draw (28,-63)--(28+8/20 ,-71);
\draw (11,-7)--(11+4*64/9, -71);
\draw (22,-14)--(22+4*57/9, -71);
\draw (33,-21)--(33+4*50/9, -71);
\draw (44,-28)--(44+4*43/9, -71);
\draw (55,-35)--(55+4*36/9, -71);
\draw (66,-42)--(71, -42-9*5/4);
\draw (18,-5)--(71,-5-7*53/11);
\draw (18,-5)--(71,-5-3*53/25);
\draw (36,-10)--(71,-10-3*35/25);
\draw (36,-10)--(71,-10-7*35/11);
\draw (54,-15)--(71,-15-7*17/11);
\draw (54, -15)--(71,-15-3*17/25);
\draw (25,-3)--(71,-3-5*46/18);
\draw (25,-3)--(71, -3 -46/32);
\draw (50,-6)--(71, -6 -21/32);
\draw (50,-6)--(71,-6-5*21/18);
\draw (32,-1)--(71,-1-3*39/25);
\draw (64,-2)--(71,-2-3*7/25);
\draw (1,-20)--(1+4*51/9, -71);
\draw (2,-40)--(2+4*31/9, -71);
\draw (3,-60)--(3+4*11/9, -71);
\draw (4,-9)--(71, -9-7*67/11);
\draw (8,-18)--(71, -18-7*63/11);
\draw (12,-27)--(71, -27-7*59/11);
\draw (16,-36)--(71, -71);
\draw (20,-45)--(20+11*26/7, -71);
\draw (24,-54)--(24+11*17/7, -71);
\draw (28,-63)--(28+11*8/7, -71);
\draw (11, -7)--(71, -7-5*60/18);
\draw (22,-14)--(71, -14 -5*49/18);
\draw (33,-21)--(71, -21 -5*38/18);
\draw (44,-28)--(71, -28 -5*27/18);
\draw (55,-35)--(71, -35 -5*16/18);
\draw (66,-42)--(71, -42 -5*5/18);
\draw (1,-20) node [above,inner sep=2, fill=white] {\small $1$};
\draw (4,-9) node [above,inner sep=2, fill=white] {\small $1$};
\draw (11,-7) node [above,inner sep=2, fill=white] {\small $1$};
\draw (18,-5) node [above,inner sep=2, fill=white] {\small $1$};
\draw (25,-3) node [above,inner sep=2, fill=white] {\small $1$};
\draw (32,-1) node [above,inner sep=2, fill=white] {\small $1$};
\foreach \x in {0,1,2}{
 \draw (2+\x*3,31+\x*11-71) node [above,inner sep=2, fill=white] {\small $2$};
} 
\foreach \x in {1,...,8}{
 \draw (2+2*3+\x*7,31+2*11-71+\x*2) node [above,inner sep=2, fill=white] {\small $2$};
} 
\foreach \x in {0,1,2,3}{
 \draw (3+\x*3,11+\x*11-71) node [above,inner sep=2, fill=white] {\small $3$};
} 
\foreach \x in {1,...,8}{
 \draw (12+\x*7,44-71+\x*2) node [above,inner sep=2, fill=white] {\small $3$};
}
\foreach \x in {0,1,2,3}{
 \draw (7+\x*3,2+\x*11-71) node [above,inner sep=2, fill=white] {\small $4$};
} 
\foreach \x in {1,...,7}{
 \draw (7+3*3+\x*7,2+3*11-71+\x*2) node [above,inner sep=2, fill=white] {\small $4$};
}
\foreach \x in {0,1,2}{
 \draw (14+\x*3,4+\x*11-71) node [above,inner sep=2, fill=white] {\small $5$};
} 
\foreach \x in {1,...,7}{
 \draw (14+2*3+\x*7,4+2*11-71+\x*2) node [above,inner sep=2, fill=white] {\small $5$};
} 
\foreach \x in {0,1}{
 \draw (21+\x*3,6+\x*11-71) node [above,inner sep=2, fill=white] {\small $6$};
}
\foreach \x in {1,...,6}{
 \draw (24+\x*7,17-71+\x*2) node [above,inner sep=2, fill=white] {\small $6$};
}
\foreach \x in {0,...,6}{
 \draw (24+\x*7+4,17-71+\x*2-9) node [above,inner sep=2, fill=white] {\small $7$};
}
\foreach \x in {0,...,4}{
 \draw (39+\x*7,1-71+\x*2) node [above,inner sep=2, fill=white] {\small $8$};
}
\end{tikzpicture}
\\
\mbox{(a)} & \mbox{(b)}
\end{array}
\]
\caption{The lattice $L_{51,71}$, with points labeled according to (a) increasing lattice length, and (b) decreasing lattice length. Dilations of the polygon $P$ are shaded in blue in (a), and red in (b).}\label{fig:ablatticelength}
\end{figure}

In later sections, we will want to have a different description of the location of a vector $\xv$ that also determines its length. Let us denote three consecutive unit vectors from simple $(t,r)$-pairs as
\[
\uv = \langle t_{2i-2}, r_{2i-2}\rangle, \vv=\langle t_{2i-1}, -r_{2i-1}\rangle, \mbox{ and } \wv = \langle t_{2i}, r_{2i} \rangle,
\] 
so that $\wv = \uv + a_{2i}\vv$, where $a_{2i}$ is the corresponding continued fraction term. Then every vector of the form $\uv + a\vv$, with $0\leq a\leq a_{2i}$, is a unit lattice vector corresponding to a slow $(t,r)$-pair. If $\uv =\uv_j$, then $\uv + \vv= \uv_{j+1}$, $\uv + 2\vv = \uv_{j+2}$, and so on, with $\wv = \uv_{j+a_{2i}}$. Thus, by Proposition \ref{prp:lengthk}, any lattice vector in the $k$-fold dilation of the line segment $(1-\lambda)\uv + \lambda\wv$, $(0\leq \lambda \leq 1)$ will also have length $k$, since this segment is just a union of dilations of segments between consecutive unit lattice vectors. We capture this idea in the following corollary to Proposition \ref{prp:lengthk}.

\begin{corollary}\label{cor:simpleangle}
Let $\uv$, $\vv$, and $\wv$, as above, with $\wv = \uv + a_{2i}\vv$. Then the following are equivalent for an increasing vector $\xv$:
\begin{itemize}
\item Vector $\xv$ has $\ell^+(\xv) = k$ and slope bounded by that of $\uv$ and $\wv$: $m_{\wv} \leq m_{\xv} \leq m_{\uv}$.
\item There exists an integer $0\leq a \leq ka_{2i}$ such that $\xv = k\uv + a \vv = k\wv-(ka_{2i}-a)\vv$.
\end{itemize}
\end{corollary}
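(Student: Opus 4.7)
The plan is to translate the claim directly into the polygonal framework of Proposition \ref{prp:lengthk}. The essential observation is that, as noted at the end of Section \ref{sec:slowEuclid}, the unit vectors $\uv_j, \uv_{j+1}, \ldots, \uv_{j+a_{2i}}$ produced during a single block of the slow Euclidean algorithm form an arithmetic progression with common difference $\vv$; hence $\uv_{j+m} = \uv + m\vv$ for all $0 \leq m \leq a_{2i}$, with $\uv_j = \uv$ and $\uv_{j+a_{2i}} = \wv$. In particular, the condition $m_{\wv} \leq m_{\xv} \leq m_{\uv}$ is equivalent, in view of the strict monotonicity $m_{\uv_1} > m_{\uv_2} > \cdots > m_{\uv_d}$, to saying that $\xv$ lies in a dilation of the union $P_j \cup P_{j+1} \cup \cdots \cup P_{j+a_{2i}-1}$.

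For the forward implication, suppose $\ell^+(\xv) = k$ and $m_{\wv} \leq m_{\xv} \leq m_{\uv}$. Proposition \ref{prp:lengthk} places $\xv \in \partial(kP_l) \cap L_{a,b}$ for some $l$ and furnishes nonnegative integers $c, d$ with $c + d = k$ and $\xv = c\uv_l + d\uv_{l+1}$. Since the slopes of $\uv_l$ and $\uv_{l+1}$ bracket the slope of $\xv$, the monotonicity above forces $l = j + m$ for some $0 \leq m \leq a_{2i} - 1$. Substituting $\uv_{j+m} = \uv + m\vv$ and $\uv_{j+m+1} = \uv + (m+1)\vv$ yields
\[
\xv = c(\uv + m\vv) + d(\uv + (m+1)\vv) = k\uv + (km + d)\vv,
\]
so $a := km + d$ satisfies $0 \leq a \leq ka_{2i}$, as required.

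Conversely, suppose $\xv = k\uv + a\vv$ with $0 \leq a \leq ka_{2i}$; the case $k = 0$ is trivial, so assume $k \geq 1$. Apply the division algorithm to write $a = km + d$ with $0 \leq d < k$ and $0 \leq m \leq a_{2i}$, with the convention that if $a = ka_{2i}$ we instead take $m = a_{2i} - 1$ and $d = k$. Setting $c = k - d \geq 0$, a direct computation gives $\xv = c\uv_{j+m} + d\uv_{j+m+1}$, so that $\xv \in \partial(kP_{j+m}) \cap L_{a,b}$. Proposition \ref{prp:lengthk} then supplies $\ell^+(\xv) = c + d = k$, and the slope of $\xv$ lies between $m_{\uv_{j+m+1}}$ and $m_{\uv_{j+m}}$, hence between $m_{\wv}$ and $m_{\uv}$.

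The only nontrivial input is the identification $\uv_{j+m} = \uv + m\vv$ across the block, which is an immediate consequence of the linear recurrences for the slow remainders and coefficients established in Section \ref{sec:slowEuclid}; the rest is a routine re-indexing of the decomposition furnished by Proposition \ref{prp:lengthk}. The mild bookkeeping in the edge case $a = ka_{2i}$ is the only place one has to be careful.
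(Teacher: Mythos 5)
Your proof is correct and takes essentially the same route as the paper: it relies on Proposition \ref{prp:lengthk} together with the identification $\uv + m\vv = \uv_{j+m}$ across a block of the slow Euclidean algorithm, which is exactly the observation the paper makes in the paragraph preceding the corollary. You simply spell out the re-indexing (including the edge case $a = ka_{2i}$) that the paper leaves implicit.
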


\subsection{Best unit vectors to approximate a given slope}

To use our notions of lattice length to find longest increasing/decreasing subsequences it is important for us to consider the unit vectors with slopes closest to $\pm 1$. In fact, we do better, and find the analogous unit lattice vectors for any slope $\pm \tau$. 

From this point forward, fix a real slope $\tau \geq 1$ (our illustrations will use $\tau =1$ for ease of viewing) and let $\av$, $\bv$, $\cv$, and $\dv$ denote those consecutive unit lattice vectors such that
\[
 m_{\av} \geq  \tau > m_{\bv} \quad \mbox{ and } \quad m_{\cv} > -\tau \geq m_{\dv}.
\]
Note $\av = \uv_i$ and $\bv=\uv_{i+1}$ for some $i$, while $\cv=\vv_{j+1}$ and $\dv=\vv_{j}$ for some $j$. We will show that $\cv$ and $\dv$ are nice linear combinations of $\av$ and $\bv$.

For example, in Table \ref{tab:eea}, we see that for $a=51$ and $b=71$ and $\tau=1$, we have $\av = \uv_3 = \la 3,11 \ra$,  $\bv = \uv_4 = \la 7,2 \ra$, $\cv=\vv_3= \la 11,-7 \ra$, and $\dv = \vv_2 = \la 4,-9 \ra$.

First we make some observations about the slow Euclidean algorithm in order to describe where $\av, \bv, \cv$, and $\dv$ occur.  The idea behind the following observations is merely that ``simple remainders decrease'' and ``remainders decrease within subtraction blocks'' while ``$t$-coefficients increase in absolute value and alternate signs from block to block.''

\begin{obs}\label{obs:abcd}
We have the following characterizations. 
\begin{itemize}
\item Let $i$ and $j$ be the lexicographically first pair such that  $\tau > r_{i,j}/t_{i,j}>0$. Then $\bv = \la t_{i,j}, r_{i,j}\ra$ is the unit vector with greatest slope less than $\tau$ and 
\[
\av = \begin{cases}
   \la t_{i,j-1}, r_{i,j-1} \ra & \mbox{ if  $j>1$,}\\
   \la t_{i-2}, r_{i-2} \ra & \mbox{ if $j=1$.}
 \end{cases}
\]
\item Let $k$ and $l$ be the lexicographically first pair such that $\tau > -r_{k,l}/t_{k,j} > 0$. Then $\cv = \la -t_{k,l}, -r_{k,l}\ra $ is the unit vector with least slope greater than $-\tau$ and
\[
 \dv = \begin{cases}
   \la -t_{k,l-1}, -r_{k,l-1}) & \mbox{ if  $l>1$,}\\
   \la -t_{k-2}, -r_{k-2} \ra & \mbox{ if $l=1$.}
 \end{cases}
\]
\end{itemize}
\end{obs}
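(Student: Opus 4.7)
The plan is to identify $\bv$ and $\av$ (respectively $\cv$ and $\dv$) as consecutive entries in the monotone sequence of positively sloped unit lattice vectors $\uv_1, \ldots, \uv_d$ (respectively negatively sloped $\vv_1, \ldots, \vv_e$), so the observation reduces to a bookkeeping translation between the indices of these sequences and the indices of the slow Euclidean algorithm.

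First I would pin down the parity structure of the $t$-coefficients. A straightforward induction using the recurrence $t_{i,j} = t_{i-2} - j t_{i-1}$ and the initial conditions $t_{-1}=0$, $t_0 = 1$ shows $t_{i,j} > 0$ when $i$ is even and $t_{i,j} < 0$ when $i$ is odd, while the $r_{i,j}$ are always non-negative. Consequently each pair $\langle t_{i,j}, r_{i,j}\rangle$ with $i$ even is some $\uv_m$, each pair with $i$ odd contributes $-\langle t_{i,j}, r_{i,j}\rangle = \vv_m$, and the condition $r_{i,j}/t_{i,j} > 0$ in the statement forces $i$ even (symmetrically, $-r_{k,l}/t_{k,l} > 0$ forces $k$ odd).

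Next I would verify that, within any even block $i$, the slope $r_{i,j}/t_{i,j}$ strictly decreases as $j$ runs from $1$ to $a_i$: $r_{i,j} = r_{i-2} - j r_{i-1}$ is strictly decreasing in $j$ while $t_{i,j} = t_{i-2} - j t_{i-1}$ is strictly increasing (since $-t_{i-1} > 0$ for even $i$), and both remain positive over the range. Combined with the already-noted chain $m_{\uv_1} > m_{\uv_2} > \cdots > m_{\uv_d} > 0$, this shows that $r_{i,j}/t_{i,j}$ is strictly decreasing in the lexicographic order on pairs $(i,j)$ with $i$ even. Therefore the lex-first pair $(i,j)$ with $r_{i,j}/t_{i,j} < \tau$ singles out the first $\uv_m$ whose slope drops below $\tau$, which is $\bv$ by definition.

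Identifying $\av$ as the lex-predecessor of $(i,j)$ is then a case split: if $j > 1$, the predecessor within block $i$ is $\langle t_{i,j-1}, r_{i,j-1}\rangle$; if $j=1$, it must come from the most recent even block, namely block $i-2$ (block $i-1$ being odd), at the terminal position $j = a_{i-2}$, where the slow and simple labels coincide and give $\langle t_{i-2}, r_{i-2}\rangle$. The characterization of $\cv$ and $\dv$ is formally identical after swapping the even-block analysis for an odd-block analysis and applying the negation convention $\vv_m = \langle -t_{k,l}, -r_{k,l}\rangle$. The only real obstacle is keeping the parities, signs, and within-block versus cross-block predecessor cases straight; no new ideas beyond those already present in Section~\ref{sec:slowEuclid} and Observation~\ref{obs:SlowCF} are required.
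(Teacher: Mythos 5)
Your proposal is correct and follows essentially the same route as the paper, which states this as an Observation justified only by the informal remark that remainders decrease (within and across blocks) while the $t$-coefficients grow in absolute value and alternate sign from block to block; your induction on the signs of $t_{i,j}$, the slope monotonicity in lexicographic order, and the two-case identification of the predecessor (within block for $j>1$, terminal entry of block $i-2$ for $j=1$) are exactly the bookkeeping that remark summarizes. No further comment is needed.
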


From now on, let us write  $\xv=\cv-\dv$ and $\yv = \bv - \av$. As discussed Section \ref{sec:slowEuclid}, it follows from the definition of the slow Euclidean algorithm that $\xv \in U$ and $\yv\in V$ are unit lattice vectors and moreover that they are simple remainder pairs (of the form $\la t_i, r_i \ra$). These vectors will provide a convenient way for us to frame our results. 

\begin{obs}\label{obs:bc}
 Let $i$ be minimal such that $\tau > r_i/|t_i| = b\delta_i >0$. 
\begin{enumerate}
\item If $t_i > 0$, i.e., if $i$ is even, then $\bv$ is in the $i$th block: $\bv = \la t_{i,j}, r_{i,j} \ra$ for some $j$, and $\cv$ is in the $i+1$st block: $\cv = -\la t_{i+1,k},r_{i+1,k} \ra$ for some $k$. Moreover:
\begin{itemize}
\item $\yv = -\la t_{i-1}, r_{i-1}\ra $, $t_{i-1} < 0$, and by minimality of $i$, $\tau \leq r_{i-1}/|t_{i-1}|=-m_{\yv}$,
\item $\xv = \la t_i, r_i \ra$, with $m_{\xv} \leq m_{\bv} < \tau$.
\end{itemize}
\item If $t_i < 0$, i.e., if $i$ is odd, then $\cv$ is in the $i$th block: $\cv = -\la t_{i,j},r_{i,j}\ra$ for some $j$, and $\bv$ is in the $i+1$st block: $\bv = \la t_{i+1,k}, r_{i+1,k} \ra$ for some $k$. Moreover:
\begin{itemize}
\item $\yv =- \la t_i, r_i\ra$, with $0 <  -m_{\yv}=r_i/|t_i| <\tau$,
\item $\xv = \la t_{i-1}, r_{i-1} \ra$, $t_{i-1}>0$, and by minimality of $i$, $\tau \leq r_{i-1}/t_{i-1} = m_{\xv}$.
\end{itemize}
\end{enumerate}
In either case, precisely one of $\xv$ and $\yv$ has slope of absolute value less than $\tau$. Moreover, as $\xv$ and $\yv$ come from simple $(t,r)$-pairs in adjacent blocks, $\{\xv, \yv\}$ is a basis for $L_{a,b}$ by Proposition \ref{prp:basepairs}.
\end{obs}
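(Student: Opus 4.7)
The plan is to prove Case 1 ($i$ even, so $t_i > 0$) in detail; Case 2 will follow by the mirror-image argument with the roles of $U$ and $V$ exchanged. The key structural fact I will use is that within the slow Euclidean algorithm, the slope magnitudes $r_{i,j}/|t_{i,j}|$ strictly decrease in the lexicographic order on $(i,j)$, while the blocks alternate in the sign of $t_{i,j}$: even $i$ produces positively-sloped unit vectors (members of $U$) and odd $i$ produces negatively-sloped ones (members of $V$, after sign flip). This monotonicity is already encoded in the ordering $m_{\uv_1}>m_{\uv_2}>\cdots>0>\cdots>m_{\vv_1}$ established earlier, together with the fact that $U$ and $V$ interleave by block.

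The first substantive step is to locate $\av, \bv, \cv, \dv$ among the blocks. By minimality of $i$, $r_{i-1}/|t_{i-1}|\geq \tau > r_i/|t_i|$, and the monotonicity of slope magnitudes ensures that every entry of every even block preceding block $i$ has slope $\geq\tau$, while every entry of every odd block strictly following block $i-1$ has slope magnitude $<\tau$. Since $\bv$ is the positively-sloped unit vector of greatest slope less than $\tau$, it must be $\bv = \la t_{i,j}, r_{i,j}\ra$ for the least $j$ with $r_{i,j}/t_{i,j}<\tau$. Symmetrically, $\cv$ is the first entry of the next odd block, $\cv = -\la t_{i+1,1}, r_{i+1,1}\ra$, and $\dv$ is the terminal simple entry of block $i-1$, $\dv = -\la t_{i-1}, r_{i-1}\ra$. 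A direct computation using $t_{i+1,1}=t_{i-1}-t_i$ and $r_{i+1,1}=r_{i-1}-r_i$ then yields $\xv=\cv-\dv=\la t_i, r_i\ra$. For $\yv=\bv-\av$, I would split into the subcases $j>1$ (where $\av=\la t_{i,j-1}, r_{i,j-1}\ra$) and $j=1$ (where $\av=\la t_{i-2}, r_{i-2}\ra$); in both, the recurrences $t_{i,j}-t_{i,j-1}=-t_{i-1}$ and $t_{i,1}-t_{i-2}=-t_{i-1}$ (and likewise for $r$) collapse to $\yv = -\la t_{i-1}, r_{i-1}\ra$.

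The remaining assertions are essentially bookkeeping. The computed slopes satisfy $m_{\xv}=r_i/t_i<\tau$ and $-m_{\yv}=r_{i-1}/|t_{i-1}|\geq \tau$, so precisely one of $\xv,\yv$ (namely $\xv$) has slope of absolute value less than $\tau$. The basis claim follows immediately from Proposition \ref{prp:basepairs}, since $\xv$ and $\yv$ are (up to a harmless sign flip on $\yv$) the simple $(t,r)$-pairs at the ends of the consecutive blocks $i-1$ and $i$. Case 2 is the identical argument with the parity of $i$ reversed, placing $\bv$ in block $i+1$ and $\cv$ in block $i$. The main subtlety I anticipate is not mathematical but notational: tracking sign conventions so that the definition of $V$-vectors as negatives of raw $(t,r)$-pairs interacts cleanly with the algorithm data, and ensuring the strict versus weak inequalities line up correctly at the threshold $r_i/|t_i|<\tau\leq r_{i-1}/|t_{i-1}|$.
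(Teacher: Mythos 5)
Your proposed argument rests on a claimed ``key structural fact'' that is false: the slope magnitudes $r_{i,j}/|t_{i,j}|$ do \emph{not} strictly decrease in the lexicographic order on $(i,j)$. What is true (and what the paper actually relies on) is that the \emph{simple} slopes $r_i/|t_i|=b\delta_i$ decrease in $i$, and within each fixed block $i$ the slopes $r_{i,j}/|t_{i,j}|$ decrease in $j$; but there is typically a jump \emph{up} when passing from the bottom of one block to the top of the next. Concretely, for $a=51$, $b=71$ (the paper's running example in Table~\ref{tab:eea}), $r_{4,1}/|t_{4,1}|=2/7\approx 0.286$ while $r_{5,1}/|t_{5,1}|=7/11\approx 0.636$, so the slope magnitude increases from $(4,1)$ to $(5,1)$. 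The two true monotonicities ($m_{\uv_1}>m_{\uv_2}>\cdots$ and $m_{\vv_1}<m_{\vv_2}<\cdots$) interleave blockwise and do not combine into a single monotone sequence in lex order.

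This invalidates the step where you conclude that ``every entry of every odd block strictly following block $i-1$ has slope magnitude $<\tau$,'' and hence the claimed identifications $\cv=-\la t_{i+1,1},r_{i+1,1}\ra$ and $\dv=-\la t_{i-1},r_{i-1}\ra$. That pair can be wrong. Take $a=6$, $b=11$, $\tau=1.1$ (so $n=10$): the slow algorithm gives $r_2/|t_2|=1/2<\tau$ with $t_2=2>0$, so $i=2$, yet $r_{3,1}/|t_{3,1}|=4/3>\tau$. Here $\cv=\vv_3=\la 5,-3\ra=-\la t_{3,2},r_{3,2}\ra$ (so $k=2$, not $1$), and $\dv=\vv_2=\la 3,-4\ra=-\la t_{3,1},r_{3,1}\ra$ lies in block $i+1$ as well, not in block $i-1$. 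Your proposed $\cv=\la 3,-4\ra$ has slope $-4/3<-\tau$, contradicting the defining property $m_{\cv}>-\tau$. Notice that you correctly allow both the $j>1$ and $j=1$ subcases for the $\av,\bv$ pair; by symmetry the $\cv,\dv$ pair requires the same case split over $k>1$ and $k=1$. Once you do that split (using $t_{i+1,k}-t_{i+1,k-1}=-t_i$, $r_{i+1,k}-r_{i+1,k-1}=-r_i$ for $k>1$, and $t_{i+1,1}-t_{i-1}=-t_i$, $r_{i+1,1}-r_{i-1}=-r_i$ for $k=1$), both cases collapse to $\xv=\cv-\dv=\la t_i,r_i\ra$ and the rest of the observation follows. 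The final formulas you state are therefore correct, but the route you took to them passes through a false claim and must be repaired as above. For locating $\bv$ in block $i$ (rather than an earlier even block), the correct justification uses only the monotonicity of the simple slopes: every entry of block $i-2$ has slope at least $r_{i-2}/|t_{i-2}|>r_{i-1}/|t_{i-1}|\geq\tau$, while block $i$ ends at $r_i/|t_i|<\tau$; the analogous argument places $\cv$ in block $i+1$.
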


Now that we have very tightly identified where vectors $\av, \bv, \cv, \dv, \xv$, and $\yv$ appear in the slow Euclidean algorithm, we are ready to characterize the linear relationships between them. There are four cases, which are equivalent up to transformations of the lattice.

\begin{prop}\label{prp:shortbases}
We have the following expressions relating $\av, \bv, \cv$, and $\dv$.
\begin{enumerate}
 \item If $m_{\yv} \leq -\tau$, then $0 < m_{\xv} < \tau$. Furthermore, 
 \begin{enumerate}
  \item if $|\yv|\leq |\xv|$, then there exists an integer $s\geq 1$ such that:
   \[
    \left(\begin{array}{cc} \av & \cv \\ \bv & \dv \end{array}\right) = \left(\begin{array}{cc} \xv - s\yv & \yv + \xv \\ \xv - (s-1)\yv & \yv \end{array}\right) = \left(\begin{array}{cc} \cv-(s+1)\dv & (s+1)\bv-s\av\\ \cv-s\dv & \bv-\av \end{array}\right).
   \] 
  \item if $|\yv| > |\xv|$, then there exists an integer $s\geq 1$ such that:
   \[
    \left(\begin{array}{cc} \av & \cv \\ \bv & \dv \end{array}\right) = \left(\begin{array}{cc} -\yv + \xv & \yv+s\xv \\ \xv & \yv+(s-1)\xv \end{array}\right) = \left(\begin{array}{cc} s\cv-(s+1)\dv & (s+1)\bv - \av \\ \cv - \dv & s\bv-\av \end{array}\right).
   \] 
   \end{enumerate}
 \item If $0> m_{\yv} >  -\tau$, then $m_{\xv} \geq \tau$. Furthermore,
 \begin{enumerate}
  \item if $|\yv|\leq |\xv|$, then there exists an integer $s\geq 1$ such that:
   \[
    \left(\begin{array}{cc} \av & \cv \\ \bv & \dv \end{array}\right) = \left(\begin{array}{cc} \xv + (s-1)\yv & \yv \\ \xv +s\yv & \yv-\xv \end{array}\right) = \left(\begin{array}{cc} s\cv-\dv & \bv - \av \\ (s+1)\cv - \dv  & s\bv-(s+1)\av \end{array}\right).
   \] 
  \item if $|\yv| > |\xv|$, then there exists an integer $s\geq 1$ such that:
   \[
    \left(\begin{array}{cc} \av & \cv \\ \bv & \dv \end{array}\right) = \left(\begin{array}{cc} \xv & \yv-(s-1)\xv \\ \yv+\xv & \yv-s\xv \end{array}\right) = \left(\begin{array}{cc} \cv-\dv & \bv - s\av \\ (s+1)\cv - s\dv  & \bv-(s+1)\av \end{array}\right).
   \] 
 \end{enumerate}
\end{enumerate}
\end{prop}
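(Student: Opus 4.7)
The plan is to verify the formulas by direct computation, using the block structure of the slow Euclidean algorithm to place every vector explicitly in terms of $\xv$ and $\yv$. I would begin by invoking Observation \ref{obs:bc}: in case (1), with $i_0$ the minimal even index for which $r_{i_0}/|t_{i_0}|<\tau$, it locates $\bv$ inside block $i_0$ and $\cv$ inside block $i_0+1$. The intermediate unit vectors in block $i_0$ form an arithmetic progression $\av_0, \av_0+\yv, \ldots, \av_0+a_{i_0}\yv = \xv$ with strictly decreasing slope, where $\av_0 = \la t_{i_0-2}, r_{i_0-2}\ra$ is the preceding simple pair; the intermediates in block $i_0+1$ form the analogous progression $\yv, \yv+\xv, \ldots, \yv+a_{i_0+1}\xv$ whose slopes increase toward $0$. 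Case (2) is symmetric under swapping the roles of $\bv,\cv$ and of $\xv,\yv$.

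Therefore there exist integers $s_{\uv} \in \{1,\ldots,a_{i_0}\}$ and $s_{\vv} \in \{1,\ldots,a_{i_0+1}\}$ with $\bv = \av_0 + s_{\uv} \yv$, $\cv = \yv + s_{\vv} \xv$, and automatically $\av = \bv - \yv$ and $\dv = \cv - \xv$. The crucial structural step is to prove a dichotomy: at least one of $s_{\uv} = a_{i_0}$ (i.e.\ $\bv = \xv$) or $s_{\vv} = 1$ (i.e.\ $\dv = \yv$) must hold. I would prove this by translating $s_{\uv} < a_{i_0}$ into the slope inequality $m_{\xv - \yv} < \tau$ and $s_{\vv} > 1$ into $m_{\xv + \yv} \leq -\tau$; writing each as a linear inequality in the components of $\xv=(x_1,x_2)$ and $\yv=(y_1,y_2)$ and adding them, the $y_2$ terms cancel and one obtains $2x_2 < -2\tau y_1$, contradicting $x_2 > 0 > -\tau y_1$.

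With the dichotomy established, the subcases are pure bookkeeping. When $s_{\vv} = 1$, choosing $s = a_{i_0} - s_{\uv} + 1 \geq 1$ and substituting $\av_0 = \xv - a_{i_0}\yv$ recovers formula (a); when $s_{\uv} = a_{i_0}$, choosing $s = s_{\vv} \geq 1$ recovers formula (b); the boundary case where both hold corresponds to $s = 1$ in either formula. To match the subcase to the hypothesis on $|\yv|$ versus $|\xv|$, I would unpack $s_{\vv} > 1$ as $-y_2 \geq \tau(x_1 + y_1) + x_2$ and square it; for $\tau \geq 1$ the resulting estimate $|\yv|^2 \geq y_1^2(1+\tau^2) + x_1^2(\tau^2 - 1) + 2\tau(x_1 y_1 + x_1 x_2 + y_1 x_2)$ exceeds $x_1^2 + x_2^2 = |\xv|^2$ strictly. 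Contrapositively, $|\yv| \leq |\xv|$ forces $s_{\vv} = 1$ and hence formula (a) applies. The equivalent second-matrix expressions in each subcase follow by routine linear algebra after substituting $\xv = \cv - \dv$ and $\yv = \bv - \av$.

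I expect the dichotomy step to be the main obstacle: a priori one might worry about a ``mixed'' configuration in which neither $\bv = \xv$ nor $\dv = \yv$, necessitating a formula the proposition does not supply. The single-line contradiction derived by summing the two slope inequalities is what rules this out cleanly, and once it is in hand the remaining verifications reduce to substitution in a fixed $2\times 2$ basis change. Case (2) is handled identically, with the roles of the two blocks reversed.
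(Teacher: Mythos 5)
Your structural skeleton is correct, and in one respect cleaner than the paper's: the placement of $\bv$ and $\cv$ inside their blocks is exactly the paper's use of Observation \ref{obs:bc}, and your dichotomy---adding $m_{\xv-\yv}<\tau$ (equivalent to $\bv\neq\xv$) to $m_{\xv+\yv}\le-\tau$ (equivalent to $\dv\neq\yv$) and getting $2x_2<-2\tau y_1$---is a correct, norm-free way to rule out the ``mixed'' block configuration. The paper instead assumes $|\yv|\le|\xv|$ from the start and pins $\dv=\yv$ by showing any lattice vector with slope in $(m_{\yv},-\tau]$ has decreasing lattice length greater than one, then climbs the block to get $\av,\bv$; your squaring computation correctly yields $s_{\vv}>1\Rightarrow|\yv|>|\xv|$ (a factor $\tau$ is dropped from the $x_1y_1$ cross term, but all terms stay nonnegative), so case (1a) is fully proved by your route.

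The gap is in case (1b) (and its mirror in case (2)). You have proved $|\yv|\le|\xv|\Rightarrow s_{\vv}=1$; what (1b) asserts is the different implication $|\yv|>|\xv|\Rightarrow s_{\uv}=a_{i_0}$, i.e.\ $\bv=\xv$ and $\av=\xv-\yv$. Your dichotomy does not give this: it leaves open the configuration $s_{\vv}=1$, $s_{\uv}<a_{i_0}$, $|\yv|>|\xv|$, where formula (a) holds with $s\ge2$ while the norm test points to (b), so the ``pure bookkeeping'' step for subcase (b) is unsupported. Nor can you close it by mirroring the squaring trick except when $\tau=1$: from $x_2-y_2<\tau(x_1-y_1)$ one gets $|\xv|>|\yv|$ when $\tau=1$, but not in general for $\tau>1$. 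In fact for $\tau>1$ the needed implication genuinely fails: take $a=49$, $b=61$ (so $49/61=[0;1,4,12]$, increasing unit vectors $\la1,49\ra,\la2,37\ra,\la3,25\ra,\la4,13\ra,\la5,1\ra$, decreasing unit vectors $\la1,-12\ra,\la6,-11\ra,\dots$) and $\tau=4$. Then $\av=\la3,25\ra$, $\bv=\la4,13\ra$, $\cv=\la6,-11\ra$, $\dv=\la1,-12\ra$, hence $\xv=\la5,1\ra$, $\yv=\la1,-12\ra$, we are in case (1) with $|\yv|^2=145>26=|\xv|^2$, yet $\dv=\yv$, $\bv=\xv-\yv\neq\xv$, $\av=\xv-2\yv$: formula (a) with $s=2$, not formula (b). So the norm criterion of (1b)/(2a) is only correct in the rescaled situation $\tau=1$, which is what Section \ref{sec:fulllattice} actually uses (the inequalities \eqref{avbv1a}--\eqref{avbv2b}, whose existence your dichotomy does establish), and it is also where the paper's ``by symmetry'' transfer of (1a) works, since the $90^\circ$ map that swaps $|\xv|$ with $|\yv|$ also replaces $\tau$ by $1/\tau$. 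To finish, either restrict to the rescaled vectors with $\tau=1$ and add the mirrored squaring argument for $s_{\uv}<a_{i_0}$, or state and prove the matching criterion you actually need; as written, your proof of (1b) and (2a) does not go through.
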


\begin{proof}
 We will prove case (1a) in detail. The other cases follow from symmetries, as indicated in Figure \ref{fig:cases}.
 
 Suppose that $-\yv = \av - \bv = \la t_{i-1}, r_{i-1}\ra $ with $t_{i-1} < 0$ and $\tau \leq  r_{i-1}/|t_{i-1}| = -m_{\yv}$, whereas $\xv = \cv - \dv = \la t_i, r_i\ra$ with $\tau > r_i/t_i = m_{\xv}$.  
From Observation \ref{obs:bc}, $\xv$ and $\yv$ are of unit lattice length and form an integer basis for the lattice. Moreover, by considering their slopes, we know that $\dv$ and $\cv$ are in the nonnegative span of $\xv$ and $\yv$. Further, we suppose that $|\xv| \geq |\yv|$. Since $\xv$ is positively sloped and $\yv$ is negatively sloped with $|\yv|\leq |\xv|$, we have 
 \[
 m_{\xv+\yv} > -1 \geq -\tau.
 \]
 
 We claim that $\yv = \dv$. Indeed, suppose $\dv'$ is a lattice vector whose slope satisfies $m_{\dv} < m_{\dv'} \leq -\tau$. Then $\dv' = k\yv + l \xv = (k-l)\yv + l(\xv + \yv)$ for some nonnegative integers $k$ and $l$. Since $m_{\xv+\yv} > -\tau$ and $\yv$ and $\dv'$ have slopes less than or equal to $-\tau$, we see that $k > l > 0$. But also the vector $(k-1)\yv + l\xv$ must have negative slope (else, it is positively sloped and $|\yv| > |(k-1)\yv + l\xv| > |\xv|$), so we can write $\dv' = ((k-1)\yv + l\xv) + \yv$, the sum of two negatively sloped lattice vectors. This shows the lattice length of any such $\dv'$ has $\ell^-(\dv') > 1$.

Now, given that $\dv = \yv$, we have $\cv = \yv + \xv$ by the definition of $\xv$.

Observation \ref{obs:bc} tells us that in the context of the slow Euclidean algorithm, $\xv = \la t_i, r_i \ra$ is the simple pair at the bottom of the block containing $\bv$. Thus adding some number of copies of $-\yv$ to $\xv$ will lead to $\bv$ (say $s-1$ of them) and by definition, one more copy gives $\av$. This proves
\[
 \bv = \xv - (s-1)\yv \quad \mbox{ and } \quad \av = \xv - s\yv.
\]
The expressions for $\av$ and $\bv$ in terms of $\cv$ and $\dv$ (and vice-versa) now follow from easy algebra.
\end{proof}

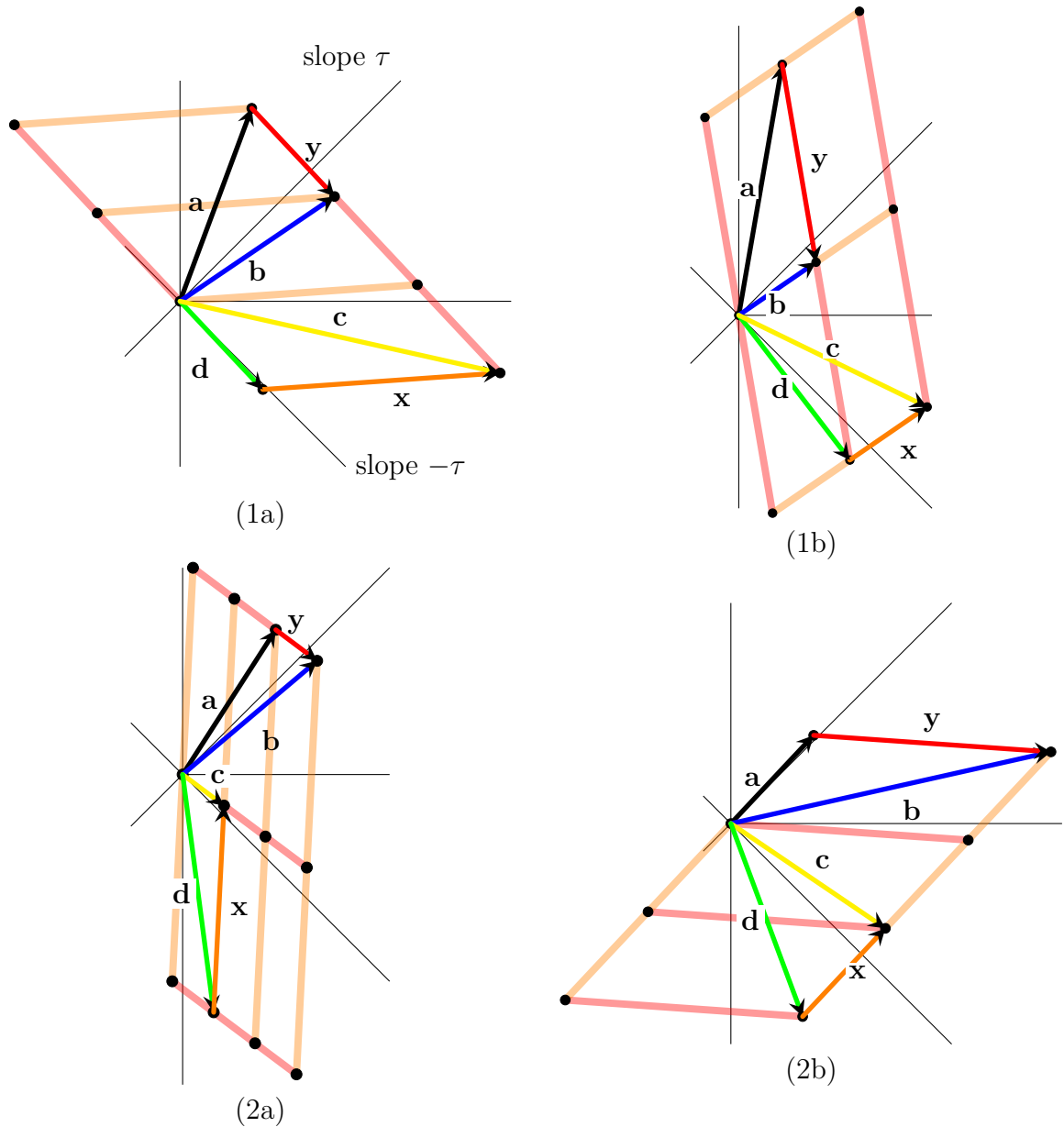
\begin{figure}
\[
\begin{array}{cc}
 \begin{array}{c}
  \begin{tikzpicture}[scale=.08,>=stealth]
\draw (0,40)--(0,0)--(60,0);
\draw (-10,-10)--(40,40) node[above left] {slope $\tau$};
\draw (-10,10)--(30,-30) node[right] {slope $-\tau$};
\draw (0,0)--(0,-30);
\draw[line width=3, cap =round, draw=red, opacity=.4] (-15,16)--(0,0);
\draw[line width=3, cap =round, draw=red, opacity=.4] (28,19)--(43,3);
\draw[line width=3, cap =round, draw=red, opacity=.4] (43,3)--(58,-13);
\draw[line width=3, cap =round, draw=red, opacity=.4] (-15,16)--(-30,32);
\draw[line width=3, cap =round, draw=orange, opacity=.4] (-30,32)--(13,35);
\draw[line width=3, cap =round, draw=orange, opacity=.4] (-15,16)--(28,19);
\draw[line width=3, cap =round, draw=orange, opacity=.4] (0,0)--(43,3);
\draw[fill=black] (13,35) circle (25pt);
\draw[fill=black] (28,19) circle (25pt);
\draw[fill=black] (58,-13) circle (25pt);
\draw[fill=black] (15,-16) circle (25pt);
\draw[fill=black] (-15,16) circle (25pt);
\draw[fill=black] (-30,32) circle (25pt);
\draw[fill=black] (43,3) circle (25pt);
\draw[fill=black] (0,0) circle (25pt);
\draw[line width=2, cap =round, draw=black, ->] (0,0)--(13,35) node[midway,left] {$\av$};
\draw[line width=2, cap =round, draw=blue, ->] (0,0)--(28,19) node[midway,below] {$\bv$};
\draw[line width=2, cap =round, draw=orange, ->] (15,-16)--(58,-13) node[midway,below right] {$\xv$};
\draw[line width=2, cap =round, draw=red, ->] (13,35)--(28,19) node[midway,right] {$\yv$};
\draw[line width=2, cap =round, draw=green, ->] (0,0)--(15,-16) node[midway, below left] {$\dv$};
\draw[line width=2, cap =round, draw=yellow, ->] (0,0)--(58,-13) node[midway,above] {$\cv$};
\end{tikzpicture}
\\
\mbox{(1a)}
 \end{array}
 &
  \begin{array}{c}
\begin{tikzpicture}[scale=.07,>=stealth]
\draw (0,60)--(0,0)--(40,0);
\draw (-10,-10)--(40,40);
\draw (-10,10)--(40,-40);
\draw (0,0)--(0,-40);
\draw[line width=3, cap =round, draw=red, opacity=.4] (7,-41)--(0,0);
\draw[line width=3, cap =round, draw=red, opacity=.4] (-7,41)--(0,0);
\draw[line width=3, cap =round, draw=red, opacity=.4] (16,11)--(23,-30);
\draw[line width=3, cap =round, draw=red, opacity=.4] (32,22)--(39,-19);
\draw[line width=3, cap =round, draw=red, opacity=.4] (32,22)--(25,63);
\draw[line width=3, cap =round, draw=orange, opacity=.4] (-7,41)--(9,52);
\draw[line width=3, cap =round, draw=orange, opacity=.4] (25,63)--(9,52);
\draw[line width=3, cap =round, draw=orange, opacity=.4] (7,-41)--(23,-30);
\draw[line width=3, cap =round, draw=orange, opacity=.4] (16,11)--(32,22);
\draw[fill=black] (9,52) circle (25pt);
\draw[fill=black] (16,11) circle (25pt);
\draw[fill=black] (39,-19) circle (25pt);
\draw[fill=black] (23,-30) circle (25pt);
\draw[fill=black] (-7,41) circle (25pt);
\draw[fill=black] (7,-41) circle (25pt);
\draw[fill=black] (32,22) circle (25pt);
\draw[fill=black] (25,63) circle (25pt);
\draw[fill=black] (0,0) circle (25pt);
\draw[line width=2, cap =round, draw=black, ->] (0,0)--(9,52) node[midway,left,fill=white, inner sep =1] {$\av$};
\draw[line width=2, cap =round, draw=blue, ->] (0,0)--(16,11) node[midway,below, fill=white, inner sep=1] {$\bv$};
\draw[line width=2, cap =round, draw=orange, ->] (7+16,-41+11)--(7+32,-41+22) node[midway,below right] {$\xv$};
\draw[line width=2, cap =round, draw=red, ->] (9,52)--(16,11) node[midway,right] {$\yv$};
\draw[line width=2, cap =round, draw=green, ->] (0,0)--(7+16,11-41) node[midway, left, fill=white, inner sep=1] {$\dv$};
\draw[line width=2, cap =round, draw=yellow, ->] (0,0)--(7+32,22-41) node[midway,above, fill=white, inner sep =1] {$\cv$};
\end{tikzpicture}
\\
\mbox{(1b)}
 \end{array}\\
  \begin{array}{c}
\begin{tikzpicture}[scale=.15,>=stealth]
\draw (0,20)--(0,0)--(20,0);
\draw (-5,-5)--(20,20);
\draw (-5,5)--(20,-20);
\draw (0,0)--(0,-30);
\draw[line width=3, cap =round, draw=orange, opacity=.4] (0,0)--(1,20);
\draw[line width=3, cap =round, draw=orange, opacity=.4] (0,0)--(-1,-20);
\draw[line width=3, cap =round, draw=orange, opacity=.4] (4,-3)--(5,17);
\draw[line width=3, cap =round, draw=orange, opacity=.4] (8,-6)--(9,14);
\draw[line width=3, cap =round, draw=orange, opacity=.4] (12,-9)--(13,11);
\draw[line width=3, cap =round, draw=orange, opacity=.4] (8,-6)--(7,-26);
\draw[line width=3, cap =round, draw=orange, opacity=.4] (12,-9)--(11,-29);
\draw[line width=3, cap =round, draw=red, opacity=.4] (1,20)--(5,17);
\draw[line width=3, cap =round, draw=red, opacity=.4] (5,17)--(9,14);
\draw[line width=3, cap =round, draw=red, opacity=.4] (4,-3)--(8,-6);
\draw[line width=3, cap =round, draw=red, opacity=.4] (8,-6)--(12,-9);
\draw[line width=3, cap =round, draw=red, opacity=.4] (-1,-20)--(3,-23);
\draw[line width=3, cap =round, draw=red, opacity=.4] (3,-23)--(7,-26);
\draw[line width=3, cap =round, draw=red, opacity=.4] (7,-26)--(11,-29);
\draw[fill=black] (9,14) circle (15pt);
\draw[fill=black] (13,11) circle (15pt);
\draw[fill=black] (4,-3) circle (15pt);
\draw[fill=black] (3,-23) circle (15pt);
\draw[fill=black] (1,20) circle (15pt);
\draw[fill=black] (5,17) circle (15pt);
\draw[fill=black] (8,-6) circle (15pt);
\draw[fill=black] (12,-9) circle (15pt);
\draw[fill=black] (-1,-20) circle (15pt);
\draw[fill=black] (7,-26) circle (15pt);
\draw[fill=black] (11,-29) circle (15pt);
\draw[fill=black] (0,0) circle (15pt);
\draw[line width=2, cap =round, draw=black, ->] (0,0)--(9,14) node[midway, left] {$\av$};
\draw[line width=2, cap =round, draw=blue, ->] (0,0)--(13,11) node[midway, below right] {$\bv$};
\draw[line width=2, cap =round, draw=red, ->] (9,14)--(13,11) node[midway, above] {$\yv$};
\draw[line width=2, cap =round, draw=yellow, ->] (0,0)--(4,-3) node[midway, above right, fill=white, inner sep=2] {$\cv$};
\draw[line width=2, cap =round, draw=green, ->] (0,0)--(3,-23) node[midway, left, fill=white, inner sep=2] {$\dv$};
\draw[line width=2, cap =round, draw=orange, ->] (3,-23)--(4,-3) node[midway, right] {$\xv$};
\end{tikzpicture}
\\
\mbox{(2a)}
 \end{array}
 &
  \begin{array}{c}
\begin{tikzpicture}[scale=.08,>=stealth]
\draw (0,40)--(0,0)--(60,0);
\draw (-5,-5)--(40,40);
\draw (-5,5)--(40,-40);
\draw (0,0)--(0,-40);
\draw[line width=3, cap =round, draw=red, opacity=.4] (0,0)--(43,-3);
\draw[line width=3, cap =round, draw=red, opacity=.4] (-15,-16)--(-15+43,-16-3);
\draw[line width=3, cap =round, draw=red, opacity=.4] (-15*2,-16*2)--(-15*2+43,-16*2-3);
\draw[line width=3, cap =round, draw=orange, opacity=.4] (0,0)--(-15,-16);
\draw[line width=3, cap =round, draw=orange, opacity=.4] (-15,-16)--(-30,-32);
\draw[line width=3, cap =round, draw=orange, opacity=.4] (28,-19)--(43,-3);
\draw[line width=3, cap =round, draw=orange, opacity=.4] (58,13)--(43,-3);
\draw[fill=black] (15,16) circle (25pt);
\draw[fill=black] (58,13) circle (25pt);
\draw[fill=black] (28,-19) circle (25pt);
\draw[fill=black] (13,-35) circle (25pt);
\draw[fill=black] (-15,-16) circle (25pt);
\draw[fill=black] (-30,-32) circle (25pt);
\draw[fill=black] (43,-3) circle (25pt);
\draw[fill=black] (0,0) circle (25pt);
\draw[line width=2, cap =round, draw=black, ->] (0,0)--(15,16) node[midway, left] {$\av$};
\draw[line width=2, cap =round, draw=blue, ->] (0,0)--(58,13) node[midway, below right] {$\bv$};
\draw[line width=2, cap =round, draw=red, ->] (15,16)--(58,13) node[midway, above] {$\yv$};
\draw[line width=2, cap =round, draw=yellow, ->] (0,0)--(28,-19) node[midway, above right, fill=white, inner sep=2] {$\cv$};
\draw[line width=2, cap =round, draw=green, ->] (0,0)--(13,-35) node[midway, left, fill=white, inner sep=2] {$\dv$};
\draw[line width=2, cap =round, draw=orange, ->] (13,-35)--(28,-19) node[midway, right,fill=white, inner sep=1] {$\xv$};
\end{tikzpicture}
\\
\mbox{(2b)}
 \end{array}
\end{array}
\]
\caption{The four cases in Proposition \ref{prp:shortbases}.}\label{fig:cases}
\end{figure}

\subsection {The vectors $\xv$ and $\yv$ in terms of the convergents for $a/b$.}\label{xy_conv}

The vectors $\xv$ and $\yv$ will play an important role in what follows, and they admit a nice interpretation in terms of the continued fraction expansion of $a/b$. From Observation \ref{obs:SlowCF}, and specifically \eqref{eq:st_pq} and \eqref{eq:r_pq}, we have generally that 
\[
 \langle t_{i,j}, r_{i,j} \rangle = \langle q_{i,j}, \pm bq_{i,j} \delta_{i,j} \rangle =  \langle q_{i,j}, \pm bq_{i,j}|a/b - p_{i,j}/q_{i,j}| \rangle.
\]

Since $\yv$ is a decreasing vector whose entries are a simple $(t,r)$-pair, we have $\yv = -\langle t_i, r_i\rangle$, with $t_i = q_{2h+1}$ for some $h$, i.e.,
\eq\label{eq:xv_q}
\yv =\langle q_{2h+1}, -bq_{2h+1}\delta_{2h+1}\rangle = \left\langle q_{2h+1},-bq_{2h+1}\left|a/b - p_{2h+1}/q_{2h+1}\right| \right\rangle.
\eeq
Since $\xv$ and $\yv$ come from adjacent blocks of the slow Euclidean algorithm, we find the similar formula for $\xv$:
\eq\label{eq:yv_q}
\xv = \langle q_j, bq_j\delta_j\rangle = \left\langle q_{j},bq_j\left|a/b-p_j/q_j\right| \right\rangle,
\eeq
where $j=2h$ or $j=2h+2$. 

Though it has not yet been emphasized, we now notice that the slope of any unit lattice vector $\langle t_{i,j}, r_{i,j}\rangle$ is $m = \pm b\delta_{i,j}$. Thus, following Observation \ref{obs:bc}, we let $i$ be minimal such that 
\[
r_i/|t_i| = b\delta_i \leq \tau.
\]
If $i$ is even, then we have $i=j=2h+2$ in \eqref{eq:yv_q}. Then we are in the Case 1 from Proposition \ref{prp:shortbases}, and 
\eq\label{xy_formulas_jeven}
\begin{array}{l}
\xv = \langle q_{2h+2}, bq_{2h+2}\delta_{2h+2}\rangle = \left\langle q_{2h+2},bq_{2h+2}\left|\frac{a}{b}-\frac{p_{2h+2}}{q_{2h+2}}\right| \right\rangle, \\
\yv = \langle q_{2h+1}, -bq_{2h+1}\delta_{2h+1}\rangle = \left\langle q_{2h+1},-bq_{2h+1}\left|\frac{a}{b}-\frac{p_{2h+1}}{q_{2h+1}}\right| \right\rangle.
\end{array}
\eeq
If $i$ is odd, then we are in the Case 2 from Proposition \ref{prp:shortbases}, and we have
\eq\label{xy_formulas_jodd}
\begin{array}{l}
\xv = \langle q_{2h}, bq_{2h}\delta_{2h}\rangle = \left\langle q_{2h},bq_{2h}\left|\frac{a}{b}-\frac{p_{2h}}{q_{2h}}\right| \right\rangle, \\
\yv = \langle q_{2h+1}, -bq_{2h+1}\delta_{2h+1}\rangle = \left\langle q_{2h+1},-bq_{2h+1}\left|\frac{a}{b}-\frac{p_{2h+1}}{q_{2h+1}}\right| \right\rangle.
\end{array}
\eeq

\section{The shape of the permutation $w(n,a/N)$}\label{sec:fulllattice}

In this section we consider the shape of Young diagrams for \sos{} permutations $w=w(n,a/N)$, where $a, n$ and $N$ are non-negative integers. As mentioned, $\sh(w)=\sh(w^{-1})$, and upon vertical rescaling, we see the set
\[
\{(i, w^{-1}(i))\}_{i=1}^n
\]
(a subset of integer points in $[1,n]\times [1,n]$), the set
\[
\{(i, f_{a/N}(i) )\}_{i=1}^n 
\]
(a subset of points in $[1,n]\times [0,1)$), and the set
\[
\{ (i, a\cdot i \mod N) \}_{i=1}^n 
\]
(a subset of points in $[1,n]\times [1,N]$), all have the same order structure. Thus we can leverage all the tools developed in Section \ref{sec:paths} to study $\sh(w)$. It is convenient to identify the permutation with the third lattice, scaled vertically to fit in a square of size $n\times n$. Throughout this section, define the lattices
\[
L:=L_{a,N}=\{(i,a\cdot i) \mod N : i \in \Z\} \quad \textrm{ and } \quad L_n:=\left\{\left(x,\frac{n}{N}y\right) : (x,y) \in L_{a,N}\right\}.
\]
These lattices differ only by a scaling factor of $n/N$ in the vertical direction. Notice then that lines of slope $\pm\tau=\pm N/n$ in the lattice $L$ are mapped to lines of slope $\pm 1$ in the lattice $L_n$.
We define also
\[
L_n^{\boxslash}:=L_n\cap [0,n]^2 \quad \textrm{ and } \quad L_n^{\boxbslash}:=L_n\cap \left([0,n]\times [-n,0]\right).
\]
In summary, the shape $\lam=\sh(w)$ is characterized by increasing paths in $L_n^{\boxslash}$ and decreasing paths in $L_n^{\boxbslash}$.

In Proposition \ref{prp:shortbases}, it is shown that there are unit lattice vectors $\xv$ and $\yv$ forming a basis for $L$ whose slopes satisfy one of several inequalities, relative to a fixed slope $\tau$. Take $\tau = N/n$, and let vectors $\xv_n$ and $\yv_n$ be obtained from the vectors $\xv$ and $\yv$ in Proposition \ref{prp:shortbases} by $\xv_n = \langle 1, n/N\rangle \circ \xv$, $\yv_n =  \langle 1, n/N\rangle \circ  \yv$, where $\circ$ is the Hadamard (entrywise) product. The slopes of these rescaled basis vectors satisfy one of the following strings of inequalities:
\begin{align}
&m_{\yv_n} \le -1 \le m_{\yv_n+\xv_n} < 0 < m_{\xv_n} \le 1 \le m_{\xv_n-s\yv_n}, \label{avbv1a}\\
&m_{\yv_n} \le -1 \le m_{\yv_n+s\xv_n} < 0 < m_{\xv_n} \le 1 \le m_{\xv_n-\yv_n}, \label{avbv1b}\\
&m_{\yv_n-\xv_n} \le -1 \le m_{\yv_n} < 0 < m_{\xv_n+s\yv_n} \le 1 \le m_{\xv_n}, \label{avbv2a} \\
&m_{\yv_n-s\xv_n} \le -1 \le m_{\yv_n} < 0 < m_{\yv_n+\xv_n} \le 1 \le m_{\xv_n}, \label{avbv2b}
\end{align}
where, in each case, $s\geq 1$ is the minimal positive integer that makes the inequalities true. The four cases above correspond to the cases 1(a), 1(b), 2(a), and 2(b), respectively, from Proposition \ref{prp:shortbases}.  Recall from Section \ref{xy_conv} that $\xv$ and $\yv$ are determined by the minimal value of $i$ such that $N\delta_i \leq \tau$, or 
\[
\delta_i = \left|\frac{a}{N} - \frac{p_i}{q_i}\right| \leq \frac{\tau}{N} = \frac{1}{n},
\]
where $p_i/q_i$ is the $i$th convergent for $a/N$.

A rescaling of the formulas \eqref{xy_formulas_jeven} and \eqref{xy_formulas_jodd} gives
\eq\label{xyn_formulas_jodd}
\xv_n = \left\langle q_{2h+2}, nq_{2h+2}\left|\frac{a}{N} - \frac{p_{2h+2}}{q_{2h+2}}\right| \right\rangle, \qquad \yv_n = \left\langle q_{2h+1}, -nq_{2h+1}\left|\frac{a}{N} - \frac{p_{2h+1}}{q_{2h+1}}\right| \right\rangle,
\eeq
if $i=2h+2$ is even; and
\eq\label{xyn_formulas_jeven}
\xv_n = \left\langle q_{2h}, nq_{2h}\left|\frac{a}{N} - \frac{p_{2h}}{q_{2h}}\right| \right\rangle, \qquad \yv_n = \left\langle q_{2h+1}, -nq_{2h+1}\left|\frac{a}{N} - \frac{p_{2h+1}}{q_{2h+1}}\right| \right\rangle.
\eeq
if $i=2h+1$ is odd.

\subsection{Symmetries between cases}\label{sec:symmetry}

There are two fundamental lattice transformations that allow us to reduce our analysis of a priori eight cases (increasing/decreasing paths for each of Inequalities \eqref{avbv1a}--\eqref{avbv2b}) down to only two cases. In terms of the basis pair $(\xv, \yv)$, these two transformations are:
\[
 \left( \begin{array}{cc} \xv & \yv\end{array} \right) = \left( \begin{array}{cc} x_1 & y_1 \\ x_2 & y_2 \end{array} \right) \mapsto \left( \begin{array}{cc} -y_2 & x_2 \\ y_1 & -x_1 \end{array} \right) = \left( \begin{array}{cc} \xv^* & \yv^* \end{array} \right) = \rho\left( \begin{array}{cc} \xv & \yv\end{array} \right),
\]
and:
\[
 \left( \begin{array}{cc} \xv & \yv\end{array} \right) = \left( \begin{array}{cc} x_1 & y_1 \\ x_2 & y_2 \end{array} \right) \mapsto \left( \begin{array}{cc} y_1 & x_1 \\ -y_2 & -x_2 \end{array} \right) = \left( \begin{array}{cc} \overline{\xv} & \overline{\yv} \end{array} \right) = \omega\left( \begin{array}{cc} \xv & \yv\end{array} \right).
\]
It is easily verified that $\rho^2 = \omega^2$ is the identity map. Simple calculations yield the following observation.

\begin{obs}
Let $(\xv, \yv)$ be a lattice basis pair. Then:
\begin{enumerate}
\item $(\xv,\yv)$ satisfies inequalities \eqref{avbv1a} if and only if $(\xv^*,\yv^*)$ satisfies \eqref{avbv1b} if and only if $(\overline{\xv},\overline{\yv})$ satisfies \eqref{avbv2b}, and
\item $(\xv,\yv)$ satisfies inequalities \eqref{avbv2a} if and only if $(\xv^*,\yv^*)$ satisfies \eqref{avbv2b} if and only if $(\overline{\xv},\overline{\yv})$ satisfies \eqref{avbv1b}.
\end{enumerate}
\end{obs}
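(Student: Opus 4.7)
The plan is to verify the biconditionals by direct computation, using the fact that $\rho$ and $\omega$ act compatibly on the relevant slope inequalities. Since $\rho^2 = \omega^2 = \mathrm{id}$, each biconditional reduces to a single implication, and the reverse direction is free.

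The key identity is that, viewed as transformations of the plane, $\rho$ behaves like the $90^\circ$ rotation $J$ (combined with swapping and negating the basis vectors), while $\omega$ behaves like reflection across the horizontal axis. Writing $J\langle u, v\rangle = \langle -v, u\rangle$ and $R\langle u, v\rangle = \langle u, -v\rangle$, one checks directly from the definitions that for all integers $a, b$,
\[
a\xv^* + b\yv^* \;=\; J(a\yv - b\xv), \qquad a\overline{\xv} + b\overline{\yv} \;=\; R(a\yv + b\xv).
\]
Taking slopes, and using the facts that $J$ sends slope $m$ to $-1/m$ and $R$ sends $m$ to $-m$,
\[
m_{a\xv^* + b\yv^*} \;=\; -\frac{1}{m_{a\yv - b\xv}}, \qquad m_{a\overline{\xv} + b\overline{\yv}} \;=\; -\,m_{a\yv + b\xv}.
\]
The same identities persist after replacing $\xv, \yv$ by the scaled vectors $\xv_n, \yv_n$ used in the inequalities.

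Given these formulas, each of the four biconditionals reduces to checking a short list of slope inequalities term by term. For example, to show that \eqref{avbv1a} implies \eqref{avbv1b} under $\rho$: the outer inequalities $m_{\yv_n} \le -1$ and $m_{\xv_n} \le 1$ transform, via $m_{\xv^*_n} = -1/m_{\yv_n}$ and $m_{\yv^*_n} = -1/m_{\xv_n}$, into $m_{\xv^*_n} \le 1$ and $m_{\yv^*_n} \le -1$. The middle inequality $m_{\yv_n + \xv_n} \ge -1$ becomes $m_{\xv^*_n - \yv^*_n} \ge 1$, since $\xv^*_n - \yv^*_n = J(\yv_n + \xv_n)$. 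Finally, the minimal $s$ with $m_{\xv_n - s\yv_n} \ge 1$ corresponds, via $\yv^*_n + s\xv^*_n = J(s\yv_n - \xv_n)$, to the minimal $s$ with $m_{\yv^*_n + s\xv^*_n} \ge -1$. The $\omega$ direction and the three remaining pairings follow identically by applying the two slope-transformation identities and tracking the signs.

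\textbf{Main obstacle.} The calculations themselves are elementary; the only subtlety is bookkeeping across the four cases and careful sign tracking, since slope is invariant under vector negation but the direction of an inequality flips when multiplying by a negative slope value. Arranging the outer, middle, and $s$-dependent inequalities of each of \eqref{avbv1a}--\eqref{avbv2b} in a uniform table, then applying the two slope-transformation identities above once, produces all four biconditionals in parallel; the involution property $\rho^2 = \omega^2 = \mathrm{id}$ then closes each equivalence.
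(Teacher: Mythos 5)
Your proof is correct, and it spells out exactly the ``simple calculations'' that the paper merely asserts yield the observation (the paper gives no actual proof). The organizing device you use — noting that $\rho$ factors through a $90^\circ$ rotation $J$ with a basis swap-and-sign, while $\omega$ factors through a reflection $R$, leading to the linearity identities
\[
a\xv^* + b\yv^* = J(a\yv - b\xv), \qquad a\overline{\xv} + b\overline{\yv} = R(a\yv + b\xv),
\]
and hence $m_{a\xv^*+b\yv^*} = -1/m_{a\yv-b\xv}$ and $m_{a\overline{\xv}+b\overline{\yv}} = -m_{a\yv+b\xv}$ — is a clean way to reduce all four biconditionals to one-line checks, and it also handles the minimal-$s$ condition correctly since the transformed slope is a monotone function of the original one on the relevant interval. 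One small remark: the sentence ``the same identities persist after replacing $\xv,\yv$ by the scaled vectors'' should be read as applying $\rho$ and $\omega$ directly to $(\xv_n,\yv_n)$, not as scaling $\rho(\xv,\yv)$ afterward — those two operations do not commute because the scaling $\langle 1, n/N\rangle$ is not rotation-equivariant. Since the observation is stated for an arbitrary basis pair, the correct reading is the former, and your argument goes through; it is only worth being explicit about which one you mean so as not to mislead a reader into thinking $\rho$ commutes with the vertical rescaling.
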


Moreover, both transformations $\rho$ and $\omega$ map increasing paths to decreasing paths and vice-versa. Therefore we can translate results for increasing paths in case \eqref{avbv1a} to results about decreasing paths in case \eqref{avbv1b}, and so on. A conceptual diagram for our cases is as follows:
\[
 (\textrm{Case } \eqref{avbv1a} \nearrow) \xleftrightarrow{\rho} (\textrm{Case } \eqref{avbv1b}\searrow) \xleftrightarrow{\omega} (\textrm{Case } \eqref{avbv2a}\nearrow) \xleftrightarrow{\rho} (\textrm{Case } \eqref{avbv2b}\searrow)
\]
and
\[
 (\textrm{Case } \eqref{avbv1b} \nearrow) \xleftrightarrow{\rho} (\textrm{Case } \eqref{avbv1a}\searrow) \xleftrightarrow{\omega} (\textrm{Case } \eqref{avbv2b}\nearrow) \xleftrightarrow{\rho} (\textrm{Case } \eqref{avbv2a}\searrow).
\]

Thus, it suffices to only verify our results in detail for increasing paths in cases \eqref{avbv1a} and \eqref{avbv1b}, with results translated via $\rho$ and $\omega$ to all other cases. In fact, the differences in the arguments used for case \eqref{avbv1b} versus those in \eqref{avbv1a} are negligible (all of Subsection \ref{sec:my} applies equally to both cases), so we focus on \eqref{avbv1a} for the duration of this section and leave the modifications for \eqref{avbv1b} to the reader.

With this assumption now fixed, and with a slight abuse of notation to streamline notation, we write our basis vectors as
\[
\xv=\xv_n = \langle x_1, x_2\rangle, \qquad \yv =\yv_n = \langle y_1, y_2\rangle,
\]
and assume the inequalities of case \eqref{avbv1a}. In particular, Subsection \ref{sec:lem} and Subsection \ref{sec:greene} make repeated use of the fact that $1\geq m_{\xv} > 0 > m_{\xv+\yv} \geq -1 \geq m_{\yv}$. 

\subsection{Lines of slope $m_{\yv}$}\label{sec:my}

Let $f_j$ be the line of slope $m_{\yv}$ passing though the point $j \xv$. That is, $f_j=\{t\yv + j\xv: t \in \R\}$. Points $(x,y) \in f_j$ satisfy 
\eq\label{eq:Lj_eq}
y=\frac{y_2}{y_1} x + j\left(x_2-\frac{y_2}{y_1} x_1\right)=\frac{y_2}{y_1} x+\frac{nj}{y_1},
\eeq
where we have used the identity $n=y_1x_2 - x_1y_2$ for the second equality.
Similarly, let $g_i$ be the line of slope $m_{\xv}$ passing through the point $i\yv$. These lines are described by the equations
\eq\label{eq:gi}
y=\frac{x_2}{x_1} x-\frac{ni}{x_1}.
\eeq

In what follows, our convention will be to treat $j$ as a real-valued parameter, and $i$ will typically be integer-valued. With that in mind, define the \emph{crossing set}
\[
\li_j := \{f_j \cap g_i : i\in \Z\} \cap [0,n]^2.
\]
If $j$ is an integer, then $f_j$ and $g_i$ necessarily intersect at lattice points, in which case $\li_j = f_j \cap L_n^{\boxslash}$.  We denote the number of crossings by $l_j=|\li_j|$.

As a function of $j$, $l_j$ is a step function which changes only by jumps of magnitude 1 or 2. More concretely, we see $l_j$ jumps when one or more of the following quantities are integers:
\begin{equation}\label{eq:fjgi}
 \frac{x_1}{y_1}j, \quad \frac{x_2}{y_2}j, \quad  \frac{x_1}{y_1}(y_2+j) - x_2, \quad \mbox{ or } \quad \frac{x_2}{y_2}(y_1-j) - x_1.
\end{equation}
These four cases correspond to a line $f_j$, $j\geq 0$, meeting a line $g_i$, $i\in \Z$, along the left edge, bottom edge, right edge, and top edge, respectively, of the box $[0,n]^2$. Since each line $f_j$, intersects the boundary of the box $[0,n]^2$ in at most two points, we see that there are at most two distinct integers among the values in \eqref{eq:fjgi}.

Suppose $j\in \N$. By Corollary \ref{cor:simpleangle}, we know that lattice vectors $\vv$ in $\li_j$  have $\ell^+(\vv) \leq j$, and that this bound is sharp. In particular, $\li_{\ell^+}$ contains the lattice vectors of maximal length, where we recall $\ell^+ = \arm(w)$ denotes the longest increasing lattice walk in $L_n$, and $\ell^- = \leg(w)$ denotes the longest decreasing walk. 

We can now find bounds for $\ell^+$ and, with a slight reorientation, $\ell^-$. 

\begin{prop}[Longest increasing and decreasing paths]\label{prp:l+bounds}
We have the following bounds on $\ell^+$ and $\ell^-$:
\eq\label{eq:l+bounds}
 y_1 - y_2 -2 < \ell^+ \leq y_1 - y_2, \quad \mbox{ and } \quad x_1+x_2 -2< \ell^- \leq x_1 + x_2.
\eeq
\end{prop}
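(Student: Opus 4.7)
I will work in the $(s, t)$ coordinate system on $L_n$ associated to the basis $(\xv_n, \yv_n)$, under which integer pairs $(s, t) \in \Z^2$ correspond exactly to lattice points of $L_n$. Using $\det(\xv_n, \yv_n) = x_1 y_2 - x_2 y_1 = -n$ (from the identity in \eqref{eq:Lj_eq}), one computes that the box $[0, n]^2$ maps to a parallelogram $\mathcal{P}^+$ with vertices $(0,0), (-y_2, x_2), (y_1 - y_2, x_2 - x_1), (y_1, -x_1)$ whose maximum $s$-coordinate $y_1 - y_2$ is attained at the image of the corner $(n,n)$. An analogous parallelogram $\mathcal{P}^-$, the image of $[0,n] \times [-n, 0]$, has vertices $(0,0), (-y_2, x_2), (-y_1 - y_2, x_1 + x_2), (-y_1, x_1)$ with maximum $t$-coordinate $x_1 + x_2$. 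The bounds for $\ell^+$ and $\ell^-$ reduce to controlling these extremal coordinates, and the argument for $\ell^-$ is the mirror of that for $\ell^+$.

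For the upper bound $\ell^+ \leq y_1 - y_2$, I first observe that every increasing unit lattice vector $\uv_i = \sigma_i \xv_n + \tau_i \yv_n$ has $\sigma_i \geq 1$: the positivity of its $x$-component and nonnegativity of its $y$-component, combined with $y_1 > 0 > y_2$, rule out $\sigma_i \leq 0$. By Proposition \ref{prp:lengthk}, any increasing lattice vector $\vv = s \xv_n + t \yv_n$ has $\ell^+(\vv) = c + d$ for a decomposition $\vv = c \uv_i + d \uv_{i+1}$, so equating $s$-coordinates yields $s = c \sigma_i + d \sigma_{i+1} \geq c + d = \ell^+(\vv)$. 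Since $\vv \in [0,n]^2$ forces $s \leq y_1 - y_2$, the bound follows. A parallel argument using decreasing unit vectors and the $t$-coordinate on $\mathcal{P}^-$ yields $\ell^- \leq x_1 + x_2$.

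For the lower bound $\ell^+ > y_1 - y_2 - 2$, parametrizing the two edges of $\mathcal{P}^+$ incident to the top vertex shows that at $s = y_1 - y_2 - r$, the $t$-range inside $\mathcal{P}^+$ is the interval $[x_2 - x_1 - r x_2 / |y_2|,\, x_2 - x_1 + r x_1/y_1]$ of length $r(x_1/y_1 + x_2/|y_2|)$. In case \eqref{avbv1a} under consideration, the inequalities $m_{\xv_n} \leq 1$, $m_{\yv_n} \leq -1$, and $|\yv_n| \leq |\xv_n|$ force $y_1 \leq x_1$, so $x_1/y_1 \geq 1$ and the interval length is at least $r$. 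Setting $j = \lfloor y_1 - y_2 \rfloor - 1 \in \Z$ gives $r \geq 1$, so the $t$-interval at $s = j$ contains an integer $t$, producing a lattice point $\vv = j \xv_n + t \yv_n \in [0,n]^2$. Because $(j, t)$ lies within one unit of the top vertex (whose image $(n, n)$ has slope exactly $1$), the vector $\vv$ has slope in $[m_{\bv_n}, m_{\av_n}]$, so Corollary \ref{cor:simpleangle} identifies $\ell^+(\vv) = j \geq \lfloor y_1 - y_2 \rfloor - 1 > y_1 - y_2 - 2$. The analogous construction on $\mathcal{P}^-$ supplies the lower bound for $\ell^-$.

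The main obstacle is verifying that the integer point $(j, t)$ produced in the lower bound actually satisfies the slope constraint of Corollary \ref{cor:simpleangle}, i.e., lies in the $(\av_n, \bv_n)$-triangle rather than an adjacent $P_k$; this requires a quantitative bound on $|t - (x_2 - x_1)|$ in terms of the interval endpoints. Minor bookkeeping is also needed for the edge cases where $y_1 - y_2$ or $x_2 - x_1$ is an integer, and the cases \eqref{avbv1b}--\eqref{avbv2b} must be reduced to \eqref{avbv1a} via the symmetries $\rho, \omega$ of Section \ref{sec:symmetry}.
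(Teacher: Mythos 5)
Your upper-bound argument is sound and is essentially the paper's argument in $(\xv,\yv)$-coordinates: the $s$-coordinate of a lattice point is exactly the index $j$ of the line $f_j$ through it, every increasing unit vector has $\xv$-coefficient at least $1$, and the maximum of $s$ over the box is $y_1-y_2$ (similarly $x_1+x_2$ for $\ell^-$). The problem is the lower bound, and it is exactly the obstacle you flag at the end: producing \emph{some} integer $t$ in the cross-section of $\mathcal{P}^+$ at $s=j=\lfloor y_1-y_2\rfloor-1$ does not give a point of length $j$. To conclude $\ell^+(j\xv_n+t\yv_n)=j$ from Corollary \ref{cor:simpleangle} you need the slope of that point to lie in the fan bounded by $\xv_n$ and $\uv=\xv_n-a_{2i}\yv_n$ (equivalently $-ja_{2i}\le t\le 0$); the range $[m_{\bv_n},m_{\av_n}]$ you cite is a subinterval of this, but neither is guaranteed. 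Your justification that the point ``lies within one unit of the top vertex'' is not true in general: the cross-section at $s=j$ is an interval of length $r(x_1/y_1+x_2/|y_2|)$, which is unbounded when $y_1=q_{2h+1}$ is small (e.g.\ $y_1=1$), so the integer $t$ you produce can sit far from the corner, and in particular can satisfy $t\ge 1$ whenever $rx_1/y_1>x_1-x_2+1$. Such points have increasing length far below their $s$-coordinate; for instance in $L_{51,71}$ with $\xv=\la 7,2\ra$, $\yv=\la 4,-9\ra$, the point $5\xv+\yv=\la 39,1\ra$ is a unit vector, so $\ell^+=1$ while $s=5$. Thus the pigeonhole step, as written, can select a useless lattice point, and no quantitative control on $t-(x_2-x_1)$ is supplied. (Two smaller issues: your cross-section formula is only valid for $r\le y_1$, while $r$ can be as large as $2$ and $y_1$ can equal $1$; and the norm inequality $|\yv_n|\le|\xv_n|$ is not literally part of \eqref{avbv1a}, though $y_1\le x_1$ does follow from $1\le m_{\xv_n-s\yv_n}$.)

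The paper avoids this by a different device at the top line: if $f_{\lfloor y_1-y_2\rfloor}$ contains no lattice point of the box, it takes the two consecutive lattice points $\xv'$, $\xv''=\xv'+\yv$ on that line straddling the box (one above the top edge, one beyond the right edge) and checks directly that $\xv'''=\xv''-\xv=\xv'-(\xv-\yv)$ lands inside $[0,n]^2$ on the previous line; this pins the witness to within one $\xv$-step of the corner region rather than leaving it an arbitrary integer point of a possibly long segment. To repair your version you would have to show that the cross-section interval intersected with $[-ja_{2i},0]$ still has length greater than $1$ (or otherwise contains an integer), which is precisely the quantitative bound you acknowledge is missing; until that is done, the lower bounds for $\ell^+$ and $\ell^-$ are not established.
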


By substituting our expressions for $x_1, x_2, y_1, y_2$ in terms of continued fraction convergents from Equations \eqref{xyn_formulas_jodd} and \eqref{xyn_formulas_jeven}, Proposition \ref{prp:l+bounds} yields Theorem \ref{thm:BSarm} for rational numbers $\alpha = a/N$.

\begin{proof}
Notice that if we solve for $j$ such that $\la n, n\ra \in f_j$, we find, from \eqref{eq:Lj_eq},
\[
 n = \frac{y_2}{y_1}n + \frac{j}{y_1}n,
\]
and thus $j=y_1-y_2$. Since $y_1-y_2$ is a priori rational, we see the lines with integer index intersecting $[0,n]^2$ are $f_0, f_1, \ldots, f_{\lfloor y_1 - y_2\rfloor}$. Thus $\ell^+ \leq y_1-y_2$. 

It is possible that this last line $f=f_{\lfloor y_1 - y_2\rfloor}$ has no lattice points inside $[0,n]^2$. If so, let $\xv'$ be the lowest point on $f$ above $[0,n]^2$ and let $\xv''=\xv'+\yv$ be the leftmost point on $f$ to the right of $[0,n]^2$. Note that since $f$ intersects $[0,n]^2$, $x'_2 \geq n \geq x_1'$, while $x_2''\leq n \leq x_1''$. 

The vector $\xv-\yv$ has positive slope, and so $\xv'''=\xv'-(\xv-\yv)=\xv''-\xv$ is to the left of $\xv'$ (so $x'''_1 < n$) and below $\xv''$ (so $x'''_2 < n$). In other words, $\xv''' \in \li_{\lfloor y_1 - y_2 \rfloor -1}\neq \emptyset$. This shows $\ell^+$ is at least $\lfloor y_1 - y_2 \rfloor -1$, i.e., $y_1 - y_2 - 2 < \ell^+$, which proves the desired result.

The case of $\ell^-$ follows the same argument for increasing paths in the lattice with basis $\xv^* = \langle -y_2,y_1\rangle$ and $\yv^* = \langle x_2, -x_1\rangle$, where we find $y_1^*-y_2^* = x_2+x_1$.
\end{proof}

For example, in Figure \ref{fig:ablatticelength}, we have $a = 51, n=N=71$, and the upper bound is realized. There, $\xv=\langle 7,2\rangle$ and $\yv = \langle 4,-9\rangle$ and we see $\ell^+ = 4+9 = 13$, while $\ell^-=7+2 = 9$.

\begin{remark}
One may compare the bounds in Proposition \ref{prp:l+bounds} with Corollary 1 in \cite{BoydSteele}. The quantities $y_1-y_2$ and $x_1+x_2$, denoted $\lam_n^+$ and $\lam_n^-$ in \cite{BoydSteele}, are the solutions to the linear programming problem in that paper, which tightly approximate $\ell^+$ and $\ell^-$ (denoted $l_n^+$ and $l_n^-$ in that paper), the solutions to an integer  programming problem.
\end{remark}

In what follows, it will be convenient to partition the collection of points in $L_n^{\boxslash}$ according to where they intersect the lines $f_j$, and according to where those lines intersect the boundary of $[0,n]^2$. Define
\[
 \Lcal = \bigcup_{0\leq j \leq y_1} \li_{\lfloor j \rfloor}, \quad 
 \Mcal = \bigcup_{y_1 < j \leq  -y_2 } \li_{\lfloor j \rfloor}, \mbox{ and } \quad 
 \Rcal = \bigcup_{  -y_2< j \leq y_1-y_2} \li_{\lfloor j \rfloor},
\]
so that $\Lcal$ contains points from lines $f_j$ that intersect $[0,n]^2$ on the left and bottom edges, $\Mcal$ contains points from lines that intersect on the top and bottom, while set $\Rcal$ contains points from lines that intersect on the top and right edges. 

Notice that, as a function of $j$, $l_j$ is weakly increasing for $0\le j \le y_1$ and weakly decreasing for $|y_2| \le j \le y_1-y_2$. In what follows, for a fixed positive integer $k$, we will need to identify the first and last lines $f_j$ which contain at least $k$ crossings. With that in mind, define for $k=1,2,3,\ldots, l_{y_1}$,
\[
j_k := \min_{0\le j \le y_1}\{ j : l_j \ge k\}, \qquad j_k' := \max_{|y_2| \le j \le y_1- y_2}\{ j : l_j \ge k\}.
\]

We now present some straightforward results for counting both the number of crossings $l_j=|\li_j|$, as well as estimating the value of $j_k$ and $j_k'$.

\begin{prop}[Counting crossings and lines]\label{prop_lj}
The numbers $l_j$ satisfy the bounds
\begin{align}
\frac{jn}{|y_1y_2|} -1 &\le l_j \le \frac{jn}{|y_1y_2|} +1, \quad &&\textrm{for} \ 0\leq j\leq y_1, \label{eq:ljbound1} \\
\frac{n}{|y_2|} -1 &\le l_j \le \frac{n}{|y_2|} +1, \quad &&\textrm{for} \ y_1 < j \leq |y_2|, \label{eq:ljbound2} \\
\frac{n(y_1-y_2-j)}{|y_1y_2|} -1 &\le l_j \le \frac{n(y_1-y_2-j)}{|y_1y_2|} +1, \quad &&\textrm{for} \ |y_2|< j \le y_1-y_2.\label{eq:ljbound3}
\end{align}
Thus, for any $k=1,2,\ldots,l_{y_1}$ we have 
\begin{align}
\frac{(k-1)|y_1y_2|}{n}  &\le j_k \le \frac{(k+1)|y_1y_2|}{n}, \label{eq:jk_est} \\
y_1-y_2-\frac{(k+1)|y_1y_2|}{n} &\le j_k' \le y_1-y_2-\frac{(k-1)|y_1y_2|}{n}.\label{eq:jkp_est}
\end{align}
\end{prop}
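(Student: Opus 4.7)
The plan is to combine a direct calculation of the intersection points $f_j \cap g_i$ with an elementary case analysis based on which pair of edges of $[0,n]^2$ the line $f_j$ meets, and then to translate the resulting bounds on $l_j$ into the stated estimates on $j_k$ and $j_k'$ by monotonicity.

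First, I would observe that since $f_j = \{t\yv + j\xv : t \in \R\}$ and $g_i = \{s\xv + i\yv : s \in \R\}$, linear independence of $\{\xv,\yv\}$ forces $f_j \cap g_i = j\xv + i\yv$. Hence the crossings along $f_j$ are the points $(jx_1+iy_1,\,jx_2+iy_2)$ as $i$ ranges over $\Z$, and consecutive crossings have $x$-coordinates separated by exactly $y_1$. Counting $l_j$ therefore reduces to counting how many integers $i$ place $j\xv+i\yv$ inside $[0,n]^2$, which in turn reduces to measuring the horizontal span of the segment $f_j \cap [0,n]^2$ and dividing by $y_1$.

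Next, using the slope-intercept form \eqref{eq:Lj_eq} together with $y_1>0$ and $y_2<0$, I would compute the endpoints of $f_j \cap [0,n]^2$ in the three regimes. For $0 \le j \le y_1$ the line enters the square at $(0,nj/y_1)$ on the left edge and exits at $(nj/|y_2|,0)$ on the bottom; for $y_1 \le j \le |y_2|$ it enters at $(n(j-y_1)/|y_2|,n)$ on the top and exits at $(nj/|y_2|,0)$ on the bottom; and for $|y_2| \le j \le y_1-y_2$ it enters at $(n(j-y_1)/|y_2|,n)$ on the top and exits at $(n,n(j+y_2)/y_1)$ on the right. The horizontal spans are $nj/|y_2|$, $ny_1/|y_2|$, and $n(y_1-y_2-j)/|y_2|$, respectively. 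Since an interval of real length $L$ contains between $L-1$ and $L+1$ integers, dividing each span by $y_1$ yields exactly \eqref{eq:ljbound1}--\eqref{eq:ljbound3}. Consistency at the transitions $j=y_1$ and $j=|y_2|$ is automatic, since both adjacent formulas evaluate to $n/|y_2|$ there.

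Finally, for the bounds on $j_k$ and $j_k'$, I would combine monotonicity of $l_j$ (weakly increasing on $[0,y_1]$, weakly decreasing on $[|y_2|,y_1-y_2]$) with inversion of \eqref{eq:ljbound1} and \eqref{eq:ljbound3}. The upper bound $l_j \le jn/|y_1y_2|+1$ forces $j_k \ge (k-1)|y_1y_2|/n$, while the lower bound $l_j \ge jn/|y_1y_2|-1$ makes $j = (k+1)|y_1y_2|/n$ sufficient to guarantee $l_j \ge k$, yielding $j_k \le (k+1)|y_1y_2|/n$. The symmetric inversion at the far end gives \eqref{eq:jkp_est}. There is no conceptual obstacle here; the only real work is the clean bookkeeping of three cases, making sure the endpoint computations use the correct edges and that the $\pm 1$ error from counting integers in a real interval is handled uniformly.
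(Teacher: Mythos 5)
Your proposal is correct and follows essentially the same route as the paper: the paper counts crossings as the Euclidean length of $f_j\cap[0,n]^2$ divided by $|\yv|$, up to $\pm 1$, and then inverts \eqref{eq:ljbound1} and \eqref{eq:ljbound3} at $j_k$ and $j_k'$, which is exactly your argument with the horizontal span divided by $y_1$ playing the role of length divided by $|\yv|$ (these are the same quantity since the crossings are spaced by $\yv$). Your version simply makes explicit the edge-by-edge endpoint computations that the paper leaves implicit.
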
 

\begin{proof}
The inequalities \eqref{eq:ljbound1}--\eqref{eq:ljbound3} are simply given by the Euclidean length of line segment $f_j \cap [0,n]^2$ divided by $|\yv|$, plus or minus 1.

The inequality \eqref{eq:jk_est} follows from \eqref{eq:ljbound1} along with the fact that $j_k$ satisifies $l_{j_k-\epsilon}< k \leq l_{j_k}$ for any $\epsilon >0$. Similarly \eqref{eq:jkp_est} follows from \eqref{eq:ljbound3} along with the fact that $l_{j_k'+\epsilon}< k \leq l_{j_k'}$ for any $\epsilon >0$.
\end{proof}

\subsection{Lemmas for lines of slope $m_{\yv}$}\label{sec:lem}

In this subsection, we record two useful lemmas regarding the lines of slope $m_{\yv}$. We first compare crossings at a fixed distance from the bottom left and top right corner. Define 
\[
\ol{j} := y_1-y_2-j.
\]

\begin{lem}\label{lem:ldiff}
For any $j \leq y_1$,
\[
 |l_j - l_{\ol{j}}|\leq 1.
\]
\end{lem}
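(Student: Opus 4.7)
The plan is to exhibit a central symmetry of the box $[0,n]^2$ under which the family of lines $\{f_j\}$ is permuted by $j \mapsto \bar{j}$ while the family $\{g_i\}_{i\in\Z}$ is permuted among itself. Consider the involution $\sigma:\R^2\to\R^2$ defined by $\sigma(x,y)=(n-x,\,n-y)$. Obviously $\sigma([0,n]^2)=[0,n]^2$. Substituting $(n-x,n-y)$ into the equation \eqref{eq:Lj_eq} for $f_j$ and solving, one sees directly that $\sigma(f_j)$ is the line of slope $y_2/y_1$ with vertical intercept $n(y_1-y_2-j)/y_1$, which is precisely $f_{\bar{j}}$. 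Similarly, substituting into \eqref{eq:gi} for $g_i$ shows $\sigma(g_i)=g_{x_2-x_1-i}$. Since the map $i\mapsto x_2-x_1-i$ is a bijection of $\Z$, the involution $\sigma$ permutes the family $\{g_i\}_{i\in\Z}$.

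Combining these facts, $\sigma$ restricts to a bijection from the intersection set $\li_j=\{f_j\cap g_i : i\in\Z\}\cap[0,n]^2$ onto $\li_{\bar{j}}$, sending the crossing $f_j\cap g_i$ to the crossing $f_{\bar{j}}\cap g_{x_2-x_1-i}$. Consequently $l_j=l_{\bar{j}}$, which is even stronger than the $|l_j-l_{\bar{j}}|\leq 1$ bound asserted in the lemma.

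\textbf{Main obstacle.} There is essentially no obstacle: the proof is a one-line symmetry argument followed by two routine algebraic verifications of how $\sigma$ acts on the equations \eqref{eq:Lj_eq} and \eqref{eq:gi}. The only point requiring mild attention is checking that $x_2-x_1-i$ is genuinely an integer whenever $i$ is (immediate since $x_1,x_2\in\Z$), so that the image line $\sigma(g_i)$ is truly a member of the family $\{g_i\}_{i\in\Z}$, not merely a line of the correct slope. If one instead preferred a direct counting approach, Proposition \ref{prop_lj} would yield only $|l_j-l_{\bar j}|\leq 2$, which explains why the symmetry route is preferable.
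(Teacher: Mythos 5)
Your rotation computation is fine as far as it goes: $\sigma(x,y)=(n-x,n-y)$ does send $f_j$ to $f_{\ol{j}}$ and $g_i$ to the line of slope $x_2/x_1$ with index $x_2-x_1-i$. The gap is in the step you dismissed as ``immediate'': in the setting of this lemma the vectors are the \emph{rescaled} basis vectors $\xv=\xv_n$, $\yv=\yv_n$ of the lattice $L_n$, whose second coordinates are $x_2=nq_{2h+*}\delta_{2h+*}$ and $y_2=-nq_{2h+1}\delta_{2h+1}$ (see \eqref{xyn_formulas_jodd}--\eqref{xyn_formulas_jeven}). These are rational but generically \emph{not} integers; e.g.\ in the paper's running example $a=25$, $n=210$, $N=211$ one has $\xv=\langle 17,3\tfrac{210}{211}\rangle$, so $x_2-x_1\notin\Z$. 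Hence $\sigma(g_i)=g_{x_2-x_1-i}$ is a line of the correct slope but is \emph{not} a member of the integer-indexed family $\{g_i\}_{i\in\Z}$, the rotation does not give a bijection $\li_j\to\li_{\ol{j}}$, and your conclusion $l_j=l_{\ol{j}}$ is false in general. Indeed, the $\pm1$ slack in the lemma exists precisely because the corner $\langle n,n\rangle$ generally lies strictly between $g_{\lfloor x_2-x_1\rfloor}$ and $g_{\lceil x_2-x_1\rceil}$, breaking the $180^\circ$ symmetry of the integer-indexed family; your argument proves the lemma only in the special case $x_2-x_1\in\Z$, which is exactly the easy case the paper disposes of in one sentence.

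To repair this you must quantify the failure of symmetry rather than assume it away. That is what the paper's proof does: it writes $l_j=l_j^{\lf}+l_j^{\bo}+1$ and $l_{\ol{j}}=l_j^{\tp}+l_j^{\rt}$, where the counts on opposite edges are taken over congruent segments (of lengths $w(j)$ and $h(j)$) against the same equally spaced family of $g$-lines with spacings $A=n/x_2$, $B=n/x_1$, but with offsets shifted by the distances $a,b$ from $\langle n,n\rangle$ to the nearest integer-indexed lines. Comparing the fractional parts $r,r'$ with $a,b$ shows each of the two edge counts can change by at most one, and the two changes cannot both be $+1$ and $-1$ incoherently, giving $l_{\ol{j}}-l_j\in\{-1,0,1\}$. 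Your closing remark is right that a crude appeal to Proposition \ref{prop_lj} only yields a bound of $2$, but the fix is this offset bookkeeping, not the exact-symmetry claim.
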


\begin{proof}
We can be completely explicit here. The line $g_{x_2-x_1}$ passes through $\langle n, n\rangle$. If $i=x_2-x_1 \in \Z$, then the lines of slope $m_{\xv}$ passing through $[0,n]^2$ have 180 degree rotational symmetry, and we have $l_j = l_{\ol{j}}$ for all $j$. 

In general, however, $i\notin \Z$, and $g_{\lfloor i \rfloor}$ and $g_{\lceil i \rceil}$ are the nearest integer lines to this top corner. Let $A$ denote the horizontal spacing between lines of slope $m_{\xv}$, and let $B$ denote the vertical spacing. Then from Equation \eqref{eq:gi} we deduce
\[
 A = \frac{n}{x_2} \quad \textrm{ and } \quad B = \frac{n}{x_1}.
\]
Let $a$ denote the horizontal distance from $g_{\lfloor i \rfloor}$ to $\langle n, n\rangle$ along the line $y=n$, and let $b$ denote the vertical distance from $g_{\lceil i \rceil}$ to $\langle n,n \rangle$ along the line $x=n$. Then solving for these values concretely (again using \eqref{eq:gi}), we have:
\[
a = \frac{n}{x_2}\left( (x_2-x_1) - \lfloor x_2 - x_1\rfloor \right) \quad \textrm{ and } \quad b = \frac{n}{x_1}\left( \lceil x_2 - x_1\rceil - (x_2-x_1) \right).
\]

For any $j \in \Q$, write $w(j) = -\frac{n}{y_2}j$ and $h(j) = \frac{n}{y_1}j$ to denote the width and height of the triangle formed by the origin and the $x-$ and $y-$intercepts of the line $f_j$. Note that by definition, $w(j)$ (resp. $h(j)$) is also the horizontal (resp. vertical) distance between $f_{\ol{j}}$ and the corner $\langle n, n\rangle$. 

For $j < y_1$, define $l^{\lf}_j$ to be the number of integer-indexed lines $g_i$ that intersect the line $x=0$ with a positive vertical component at most $h(j)$ and define $l^{\bo}_j$ to be the number of lines $g_i$ that intersect the line $y=0$ with a positive horizontal component at most $w(j)$. Then counting the line $g_0$ that passes through the origin, we have
\[
 l_j = l^{\lf}_j + l^{\bo}_j + 1.
\]

Similarly, let $l^{\tp}_j$ denote the number of lines $g_i$ that intersect the line $y=n$ at a horizontal component $w(j)\leq x < n$, and let $l^{\rt}_j$ denote the number of lines that intersect the line $x=n$ at height $h(j) \leq y < n$. Then (unless $x_2 - x_1 \in \Z$ as discussed earlier), we have
\[
 l_{\ol{j}} = l^{\tp}_j + l^{\rt}_j.
\]

Now suppose $w(j) = mA+r$ for some $0\leq r < A$. Then 
\[
 l^{\bo}_j = m \quad \textrm{ and } \quad l^{\tp}_j = m+ \begin{cases} 0 & \textrm{ if } r < a, \\ 1 & \textrm{ if } a \leq r < A. \end{cases}
\]
Similarly if $h(j) = m'B+r'$,
\[
 l^{\lf}_j = m' \quad \textrm{ and } \quad l^{\rt}_j = m'+ \begin{cases} 0 & \textrm{ if } r' < b, \\ 1 & \textrm{ if } b \leq r' < B. \end{cases}
\]

Summing, the result follows since $l^{\bo}_j + l^{\lf}_j = l_j - 1$, and thus,
\[
l_{\ol{j}} = l^{\tp}_j + l_j^{\rt} = l_j + \begin{cases} -1 & \textrm{ if } r < a, r'<b \\ 1 & \textrm{ if } a \leq r < A, b \leq r' < B, \\ 0 & \textrm{ otherwise.} \end{cases}
\]
\end{proof}

We now introduce notation to indicate mapping an arbitrary crossing set $\li_j$ into an integer-indexed crossing set. For $j < y_1$, we define
\[
 \psi_j = \{ \wv + (\lceil j \rceil - j) \xv : \wv \in \li_j\} \subseteq \li_{\lceil j \rceil},
\]
and for $j > |y_2|$, we define
\[
 \phi_j = \{ \wv - (j-\lfloor j \rfloor) \xv : \wv \in \li_j\} \subseteq \li_{\lfloor j \rfloor}.
\]
By construction $\psi_j, \phi_j \in L_n^{\boxslash}$.

\begin{lem}\label{lem:bot}
Suppose $j < y_1$. If $\zv$ is the $i$th lattice point from the top in $\psi_j$, then $\zv - (i+1)\xv \notin [0,n]^2$. Simlilarly, if $j > |y_2|$ and $\zv$ is the $i$th lattice point from the bottom in $\phi_j$, then $\zv + (i+1)\xv \notin [0,n]^2$.
\end{lem}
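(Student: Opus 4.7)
The plan is to explicitly parametrize the topmost lattice point of $\li_j$ and then bound the first coordinate of $\zv - (i+1)\xv$ directly. Since $j < y_1$, the line $f_j$ exits $[0,n]^2$ through its left and bottom edges, so the top of $\li_j$ is controlled by the constraint that the first coordinate be nonnegative. Writing the $i$th point on $f_j$ as $\wv = j\xv + i\yv$, the smallest admissible integer index is $i_1 = \lceil -jx_1/y_1\rceil$; setting $\delta := \{jx_1/y_1\} \in [0,1)$ and using $n = y_1 x_2 - x_1 y_2$, a direct computation yields the closed form
\[
\wv_1 = (\delta y_1,\; jn/y_1 + \delta y_2).
\]
Since consecutive points in $\li_j$ differ by $\yv$, I have $\wv_k = \wv_1 + (k-1)\yv$, and with $\epsilon := \lceil j\rceil - j \in [0,1)$ the translation defining $\psi_j$ gives
\[
\zv_k = \wv_k + \epsilon\xv = \bigl((k-1+\delta)y_1 + \epsilon x_1,\; jn/y_1 + (k-1+\delta)y_2 + \epsilon x_2\bigr).
\]

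The first coordinate of $\zv_k - (k+1)\xv$ then simplifies to $(k-1)(y_1 - x_1) + \delta y_1 - (2-\epsilon)x_1$. The structural fact that makes the argument work is $y_1 < x_1$, which I would derive from the case~\eqref{avbv1a} assumption: the inequality $m_{\xv - s\yv} \geq 1 > 0$ forces $x_1 - sy_1 > 0$ for the relevant $s \geq 1$, hence $y_1 < x_1/s \leq x_1$. Since $k \geq 1$, $\delta < 1$, and $2 - \epsilon > 1$, this yields $(k-1)(y_1 - x_1) \leq 0$ and $\delta y_1 < y_1 \leq x_1 < (2-\epsilon)x_1$, so the first coordinate is strictly negative and $\zv_k - (k+1)\xv \notin [0,n]^2$.

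The claim for $\phi_j$ is handled by a symmetric argument. When $j > |y_2|$, the line $f_j$ exits $[0,n]^2$ through the top and right edges, so the bottom-most lattice point of $\li_j$ has first coordinate close to $n$ rather than to $0$. Setting $\eta := \{(n - jx_1)/y_1\} \in [0,1)$, the bottom-most point equals $(n - \eta y_1,\; n(j+y_2)/y_1 - \eta y_2)$, and running the same calculation shows the first coordinate of the analogous translated point exceeds $n$ by $(k-1)(x_1-y_1) + (2-\{j\})x_1 - \eta y_1$, which is strictly positive by the same $y_1 < x_1$ argument. The main technical obstacle is bookkeeping the various fractional shifts ($\delta$, $\epsilon$, $\eta$, and $\{j\}$) that arise from the rounding; once these are handled correctly, the inequality $y_1 < x_1$ from case~\eqref{avbv1a} does all the heavy lifting.
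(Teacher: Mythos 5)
Your proof is correct and hinges on the same key fact as the paper's, namely $y_1 < x_1$, applied to show that the first coordinate of the shifted point is negative (or exceeds $n$ in the mirrored case). The paper reaches this more abstractly---observing that $\zv' - i\yv$ already lies past the left edge by definition of topmost, so replacing $\yv$ by the wider $\xv$ only shrinks the first coordinate further---whereas you obtain the same conclusion by explicitly parametrizing the topmost crossing of $\li_j$ via $\lceil -jx_1/y_1\rceil$ and tracking the fractional offsets, and your derivation of $y_1 < x_1$ from $m_{\xv - s\yv}\geq 1$ in~\eqref{avbv1a} is in fact somewhat more explicit than the paper's terse appeal to $\xv - \yv$ lying in the first quadrant.
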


\begin{proof}
We argue only for the case of $j<y_1$, as the case of $j>|y_2|$ follows by 180 degree rotation.

First, we make the simple observation that because $\xv -\yv$ is in the first quadrant, $x_1 > y_1$. Suppose $\zv'=\langle z_1, z_2\rangle$ is the $i$th crossing from the top in $\li_j$. Then $\zv'-i\yv \notin [0,n]$, and $z_1-iy_1 < 0$. But then $z_1 - ix_1 < z_1 - iy_1 <0$ as well, and hence $\zv' - i \xv \notin [0,n]^2$.

Now, $\zv = \zv'+\delta\xv$, where $\delta = (\lceil j \rceil - j) <1$. Thus $\zv - (i+1) \xv = \zv' - i\xv - (1-\delta)\xv$, which is clearly outside of $[0,n]^2$.
\end{proof}

We now define the quantity
\eq\label{eq:j_bound}
J_0=  \frac{y_1y_2(x_1-x_2)}{x_1y_2+x_2y_1}.
\eeq
which will play a role in much of what follows. 

\begin{lem}\label{lem:pathP}
Fix $k$ such that $j= \max\{j_k, \overline{j_k'}\} \leq J_0$. Let $\wv$ be the topmost point in $\li_j$ and let $\vv=\wv+(\lceil j \rceil - j)\xv$ be the topmost point in $\psi_j$. Similarly, let $\wv'$ be the topmost point in $\li_{\overline{j}}$ and let $\vv'= \wv'-(\ol{j} - \lfloor \ol{j} \rfloor)\xv$ be the topmost point in $\phi_{\ol{j}}$.  Then there is an increasing path of length $\lfloor \overline{j}\rfloor - \lceil j \rceil$ from $\vv$ to $\vv'$. 
\end{lem}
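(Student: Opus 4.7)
The plan is to construct an explicit increasing path of length $k := \lfloor \overline{j}\rfloor - \lceil j \rceil$ from $\vv$ to $\vv'$ one step at a time. First I observe that any increasing unit vector whose decomposition in the basis $\{\xv,\yv\}$ has $\xv$-coefficient equal to $1$ must be of the form $\xv - r\yv$ for some $r \in \{0,1,\ldots,s\}$: these are precisely the $s+1$ positive-slope unit vectors in block $i$ of the slow Euclidean algorithm that interpolate in slope between $\xv$ and $\av = \xv - s\yv$. Since $\vv$ and $\vv'$ lie on integer-indexed $f$-lines whose indices differ by exactly $k$, the $\xv$-coefficient of $\vv'-\vv$ in the $\{\xv,\yv\}$-basis equals $k$; because every increasing unit vector has a nonnegative $\xv$-coefficient, any $k$-step increasing path from $\vv$ to $\vv'$ must use only steps of the form $\xv-r\yv$ with $0 \leq r \leq s$.

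I will construct the path greedily. Set $\vv_{\lceil j\rceil} := \vv$, and for $\lceil j\rceil \leq l < \lfloor \overline{j}\rfloor$ define $\vv_{l+1} := \vv_l + \xv - r_l\yv$, where $r_l \geq 0$ is the smallest integer such that $\vv_{l+1}\in [0,n]^2$. Geometrically, each step first translates by $\xv$, which may push the point above the top edge $y=n$ (since $x_2 > 0$), and is then corrected by subtracting $\yv$ as many times as needed to re-enter the box. Each $\vv_l$ lies on $f_l$, and the resulting walk is an increasing walk as long as $r_l \leq s$ at every step.

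The critical claim is that $r_l \leq s$ throughout, and this is where the hypothesis $j \leq J_0$ enters. Using \eqref{eq:Lj_eq} and the boundary property $\vv_1 < x_1$ coming from Lemma \ref{lem:bot} applied to $\vv$ (the topmost point in $\psi_j$ forces $\vv - \xv \notin [0,n]^2$, and in Case \eqref{avbv1a} this exit must occur through the left edge), I can compute the $y$-coordinate of $\vv_l + \xv$ explicitly as a function of $l$. The largest overshoot above $y=n$ occurs for the largest $l$ in the range, and a direct algebraic calculation using the identity $n = x_2y_1 - x_1y_2$ shows that the extremal $j$ at which this overshoot requires more than $s$ copies of $\yv$ to correct is precisely $J_0$; hence $j \leq J_0$ gives $r_l \leq s$ uniformly.

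Finally, I will verify that the recursion terminates at $\vv'$. Both $\vv_{\lfloor \overline{j}\rfloor}$ and $\vv'$ are lattice points on $f_{\lfloor \overline{j}\rfloor}$, and an argument analogous to Lemma \ref{lem:bot} applied at the top-right corner to $\vv'$ in $\phi_{\overline{j}}$ forces both to satisfy the same boundary condition $\zv + \xv \notin [0,n]^2$, identifying them uniquely. I expect the main obstacle to be the uniform bound $r_l \leq s$; while the geometric intuition is clear---$J_0$ is the threshold past which a forced ``cliff drop'' in topmost heights would occur---translating $j \leq J_0$ into the precise algebraic inequality needed requires careful bookkeeping in the $(\xv,\yv)$-coordinate system.
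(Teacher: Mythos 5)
Your construction has a genuine gap: the greedy walk you define does not end at $\vv'$, and reaching $\vv'$ exactly is the whole point of the lemma (the translates of this path are glued to $\psi_j$ and $\phi_{\ol{j}}$ in Proposition \ref{prp:construction}). Taking $r_l\geq 0$ minimal so that $\vv_l+\xv-r_l\yv$ stays in $[0,n]^2$ means you take $r_l=0$ whenever possible, so the walk hugs the shallow direction $\xv$; in the configuration of Figure \ref{fig:parallel}, for instance, $\vv\approx(4,100)$, $\vv'\approx(135,210)$, and $k=11$, but $\vv+11\xv=(191,133)$ lies comfortably inside the box, so your greedy walk is eleven $\xv$-steps ending far below and to the right of $\vv'$. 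Your termination argument cannot rescue this: the condition $\zv+\xv\notin[0,n]^2$ does not determine a point of $f_{\lfloor\ol{j}\rfloor}$ uniquely, and the greedy endpoint need not satisfy it at all. There is also a sign error in the geometric picture: since $-\yv=\la -y_1,|y_2|\ra$ points up and to the left, subtracting $\yv$ pushes a point \emph{further} past the top edge rather than correcting an overshoot. Finally, your claim that every increasing unit vector with $\xv$-coefficient $1$ is $\xv-r\yv$ with $0\le r\le s$ is wrong in both directions ($\cv=\xv+\yv$ is increasing, and $\xv-r\yv$ stays an increasing unit vector up to $r=\lfloor x_1/y_1\rfloor\ge s$), and insisting on $r\le s$ aims at a stronger statement than is true: the paper only establishes that the slope of $\vv'-\vv$ is at most $m_{\uv}$ with $\uv=\xv-\lfloor x_1/y_1\rfloor\yv$, not at most $m_{\av}$.

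What the lemma actually requires is that the displacement $\vv'-\vv$ decomposes as $k\xv-m\yv$ with $0\le m\le k\lfloor x_1/y_1\rfloor$, equivalently that its slope lies in $[m_{\xv},m_{\uv}]$; Corollary \ref{cor:simpleangle} then hands you the path. That is exactly what the paper proves, in two separate estimates: the lower bound $m_{\xv}\le m_j$ is where the hypothesis $j\le J_0$ enters (via the computation that $\wv+(\ol{j}-j)\xv$ has height at most $n$, using $n=x_2y_1-x_1y_2$), and the upper bound $m_j\le m_{\uv}$ holds for all $0<j\le y_1$ and needs the comparison with the segment $AB$ together with the analysis of expression \eqref{eq:jb2} as a function of $a=\lfloor x_1/y_1\rfloor$ past its vertical asymptote. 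In your write-up this quantitative core is replaced by the assertion that ``a direct algebraic calculation'' identifies $J_0$ as the threshold for a single top-edge overshoot; that is not a proof, and it also mislocates the role of $J_0$, which governs only one of the two slope inequalities. To repair your approach you would need to abandon the minimal-$r_l$ greedy (or replace it with one that provably terminates at $\vv'$) and in either case supply the two slope estimates, at which point you have reproduced the paper's argument.
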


\begin{proof}
For each $j$ and $\vv$, $\vv'$ defined as above, let $m_j$ denote the slope of the line from $\vv$ to $\vv'$. Corollary \ref{cor:simpleangle} says there is an increasing path of length $\lfloor \overline{j} \rfloor-\lceil j\rceil $ from $\vv$ to $\vv'$ if $m_{\xv} \leq m_j \leq m_{\uv}$.

We first show $m_j \ge m_{\xv}$. It will be convenient to consider the slope of the line from $\wv$ to $\wv'$, which we denote $\tilde{m}_j$. We will show that $\tilde m_j \ge m_{\xv}$, which is sufficient to show $m_j \ge \tilde{m}_{j}$.  Indeed, if $\tilde{m}_j \ge m_{\xv}$, then the point $\vv = \wv+(\lceil j \rceil - j)\xv$ is weakly below the line segment connecting $\wv$ and $\wv'$, and $\vv'=\wv' -(\ol{j} - \lfloor \ol{j} \rfloor)\xv$ is weakly above it. Thus $m_j \ge \tilde{m}_j \ge m_{\xv}$. So let us show that $\tilde m_j \ge m_{\xv}$. Since $\wv+(\overline{j} - j)\xv$ is in $f_{\overline{j}}$, it is enough to show that $\wv+(\overline{j} - j)\xv$ has vertical component at most $n$.

 Note that $\wv$ is very near the point $\langle 0, \frac{nj}{y_1}\rangle$, the point at which line $f_j$ intersects the left edge of $[0,n]^2$. In particular, $ w_2 \le \frac{nj}{y_1}$. 
Computing this vertical component  we find:
\begin{align*}
 w_2 + (\overline{j}-j)x_2 &< \frac{nj}{y_1} + (\overline{j}-j)x_2=\frac{nj}{y_1} + (\overline{j}+j - 2j)x_2 \\
  & \le  j\left( \frac{n}{y_1} - 2x_2\right) + (y_1-y_2)x_2.
\end{align*}
Using $j\leq \frac{y_1y_2(x_1-x_2)}{x_1y_2+x_2y_1}$ we find 
\begin{align*}
j\left( \frac{n}{y_1} - 2x_2\right) + (y_1-y_2)x_2 &\leq \frac{y_1y_2(x_1-x_2)}{x_1y_2+x_2y_1}\left( \frac{n}{y_1} - 2x_2\right) + (y_1-y_2)x_2 \\
&= \frac{y_2(x_1-x_2)(n-2x_2y_1)}{x_1y_2+x_2y_1} + (y_1-y_2)x_2 \\
&= -y_2(x_1-x_2) + (y_1-y_2)x_2\\
&= -y_2x_1+y_1x_2 = n,
\end{align*}
where we have used $n = -y_2x_1+y_1x_2$ in two different places. Thus $\wv+(\overline{j}-j)\xv$ has vertical component at most $n$, and $\tilde m_j \geq m_{\xv}$ as desired.

To show $m_j \leq m_{\uv}$, we use a more direct comparison of the two slopes. The line $f_j$ hits the left edge of the box $[0,n]^2$ at height $nj/y_1$, so $\vv$ can be no lower than $nj/y_1 + y_2$. On the other hand, $\vv$ can be no farther to the right than $x_1$, since otherwise $\vv-\xv$ would still be in the box. With this in mind, we denote $A=(x_1, nj/y_1 + y_2)$, which is weakly below and to the right of point $\vv$.

Similarly, we find a point $B$ which is weakly above and left of $\vv'$. The point $\vv'$ is no higher than $n$ itself. The line $f_{\ol{j}}$ intersects the top edge of the box at horizontal component $-n(\ol{j} - y_1)/y_2$, and so point $\vv'$ can be no farther left than $-n(\ol{j} - y_1)/y_2 - x_1$. So we can take $B$ to be the point $B=(-n(\ol{j} - y_1)/y_2 - x_1,n)$.

Denoting the slope of the line between the points $A$ and $B$ as $m_{AB}$, we see that $m_j \le m_{AB}$, so it suffices to show $m_{AB} \leq m_{\uv}$.
We can write the slope $m_{AB}$ explicitly:
\begin{align*}
 m_{AB} &= \frac{ n -  nj/y_1 - y_2}{-n(\ol{j} - y_1)/y_2 - x_1 - x_1}\\
  &= \frac{ny_2(y_1-j) - y_2^2y_1}{-ny_1(\ol{j} - y_1) -2x_1y_1y_2}\\
  &= \frac{-ny_2(y_1-j) + y_2^2y_1}{ny_1(-y_2-j) + 2x_1y_1y_2}.
\end{align*}


We now show that $m_{AB} < u_2/u_1=m_{\uv}$ for all $0<j\le y_1$. Since $J_0\le y_1$, this will complete the proof.  Let
\begin{align*}
N(j) &= u_1(-ny_2(y_1-j) + y_2^2y_1) = j\cdot ny_2u_1 -ny_2y_1u_1 + y_2^2y_1, \\
D(j) &= u_2(ny_1(-y_2-j) + 2x_1y_1y_2) =  -j\cdot ny_1u_2 - ny_2y_1u_2 + 2x_1y_2y_1.
\end{align*}
Then $m_{AB} < u_2/u_1$ precisely when $N(j) < D(j)$. 
Solving the inequality $N(j) < D(j)$ for $j$ yields
\begin{equation}\label{eq:jb1}
 j < \frac{ -y_2y_1( u_2 - u_1 +2x_1/n-y_2/n)}{ y_2u_1+y_1u_2}.
\end{equation}
(Note that we have divided by $y_2u_1+y_1u_2$, which is necessarily positive since $u_2 > -y_2 >0$ and $y_1 > u_1 > 0$.) We are left to show that the right hand side of \eqref{eq:jb1} is bounded by $y_1$.

Since 
\[
\uv= \xv-a\yv, \quad a= \lfloor x_1/y_1 \rfloor,
\]
we can rewrite the right hand side of \eqref{eq:jb1} as
\begin{equation}\label{eq:jb2}
\frac{- y_2y_1( x_2 - x_1 +(2x_1-y_2)/n+a(y_1-y_2))}{ x_2y_1+y_2x_1-2ay_1y_2}.
\end{equation}
Consider this expression as a function of $a$ (we know that $a=\lfloor x_1/y_1 \rfloor$, but consider $a$ as free for now). There is a vertical asymptote at $a=x_1/(2y_1)+x_2/(2y_2)< \lfloor x_1/y_1 \rfloor$ (recall that $x_1>y_1$). To the right of this asymptote the denominator is clearly positive, since it is positive for large enough $a$. (In fact, at $a=x_1/y_1$, the denominator is $x_2y_1-y_2x_1 = n$.) We can check that the numerator is positive as well by plugging in $a=x_1/(2y_1)+x_2/(2y_2)$:
\begin{multline*}
- y_2y_1( x_2 - x_1 +(2x_1-y_2)/n+(x_1/(2y_1)+x_2/(2y_2))(y_1-y_2)) = \\
 - y_2y_1\left[\frac{x_1}{2}\left(-\frac{y_2}{y_1} -1 + \frac{4}{n}\right)+\frac{x_2(y_1+y_2)}{2y_2} - \frac{y_2}{n}\right].
\end{multline*}
Since $0<y_1\le-y_2$, each of the three terms inside the brackets is non-negative, and the first and last terms are positive, so the entire expression is as well.

Since both the numerator and denominator of \eqref{eq:jb2} are positive for $a$ immediately to the right of the vertical asymptote, the expression is decreasing in $a>x_1/(2y_1)+x_2/(2y_2)$. It follows that the value of \eqref{eq:jb2} at $a=\lfloor x_1/y_1 \rfloor>x_1/(2y_1)+x_2/(2y_2)$ is greater than the value in the limit as $a\to\infty$, which is
\[
\lim_{a\to +\infty} \frac{- y_2y_1a(y_1-y_2)}{-2ay_1y_2} = \frac{y_1-y_2}{2}  \ge y_1.
\]
Thus $N(j) < D(j)$, and therefore $m_{AB} < m_{\uv}$, for all $0<j \le y_1$.
\end{proof}

\subsection{Constructive use of Greene's Theorem}\label{sec:greene}

Recall $I_k=\lambda_1+\cdots+\lambda_k$ denotes the size of the largest subsequence formed by the union of $k$ increasing subsequences. We will prove a nearly exact formula for $I_k$ in terms of counting lines of slope $m_{\yv}$, which in turn will give us a uniform estimate for $\lambda_k$.

First, we make the easy observation that any union of $k$ increasing subsequences can intersect each crossing set $\li_i$ in at most $k$ points. Otherwise, since $\yv$ is negatively sloped, we would have a decreasing run of size $k+1$. Making this observation for each integer $i$ yields the following upper bound for $I_k$.

\begin{obs}\label{obs:up_bd_Ik}
For any $k$,
 $$I_k \leq \sum_{i=1}^{\lfloor y_1-y_2\rfloor} \min( l_i, k).$$
\end{obs}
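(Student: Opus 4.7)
The plan is to decompose a union $S$ of $k$ increasing subsequences by which integer-indexed line $f_i$ each point lies on, and bound the contribution of each line separately.

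The key step is to show that $|S\cap\li_i|\leq\min(l_i,k)$ for every integer $i$. The trivial bound $|S\cap\li_i|\leq l_i$ is immediate. For the bound of $k$, the points of $\li_i$ all lie on the line $f_i$ whose slope $m_{\yv}$ is strictly negative, so when they are ordered by $x$-coordinate their $y$-coordinates strictly decrease. Hence any single increasing subsequence contains at most one point of $\li_i$, and so $k$ such subsequences together contribute at most $k$ points. (Equivalently, $k+1$ collinear points on $f_i$ would form a decreasing subsequence of length $k+1$, forbidden in a union of $k$ increasing subsequences by Greene's characterization.)

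To finish, I would verify that the sets $\li_1,\li_2,\ldots,\li_{\lfloor y_1-y_2\rfloor}$ cover all nonzero lattice points of $L_n^{\boxslash}$, so that summing over $i$ captures every point of $S$. Since $\{\xv,\yv\}$ is a basis for $L_n$, every lattice point can be written uniquely as $\zv = a\xv + b\yv$ for integers $a,b$, and then $\zv \in f_a$. A short sign check using that $\xv$ has both coordinates positive while $\yv$ has first coordinate positive and second negative rules out $a\leq 0$ for any nonzero $\zv$ in the closed first quadrant. The upper bound $a\leq\lfloor y_1-y_2\rfloor$ is exactly the condition that $f_a\cap[0,n]^2\neq\emptyset$, established in the proof of Proposition \ref{prp:l+bounds}. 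Combining these bounds gives
\[
|S| \;=\; \sum_{i=1}^{\lfloor y_1-y_2\rfloor}|S\cap\li_i| \;\leq\; \sum_{i=1}^{\lfloor y_1-y_2\rfloor}\min(l_i,k),
\]
and maximizing over all valid $S$ produces the stated bound on $I_k$.

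No serious obstacle arises; this is essentially a pigeonhole argument once the partition of $L_n^{\boxslash}\setminus\{\mathbf{0}\}$ by integer-indexed lines is in place. The only mild technical point is confirming the exact range $1\leq i\leq\lfloor y_1-y_2\rfloor$ of the sum, which follows from the basis structure and Proposition \ref{prp:l+bounds}.
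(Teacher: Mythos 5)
Your argument is correct and is essentially the paper's proof: the paper likewise notes that each integer-indexed crossing set $\li_i$ lies on a negatively sloped line, so a union of $k$ increasing subsequences meets it in at most $\min(l_i,k)$ points, and sums over $i$. Your extra verification that the sets $\li_1,\ldots,\li_{\lfloor y_1-y_2\rfloor}$ capture every lattice point of $L_n^{\boxslash}$ is a detail the paper leaves implicit, and it checks out.
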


We will prove that the above inequality is nearly an equality when $k$ is not too big by constructing a collection of $k$ increasing paths in $L_n^{\boxslash}$. Here is the general idea of the construction. Fix $j = \max\{j_k, \ol{j_k'}\}$ as in Lemma \ref{lem:pathP}, and let $P$ denote the path from the topmost lattice point in $\psi_j$ to the topmost lattice point in $\phi_{\ol{j}}$. 

Define set $S=S(j)$ to be the union of the following four sets:
\begin{itemize}
\item $\Lcal_j$, containing of all points in $\li_1, \li_2, \ldots, \li_{\lceil j \rceil -1}$,
\item $\Mcal_j$, containing all points in paths $P, P+\yv, \ldots, P+(k-1)\yv$, (these connect the top $k$ points in $\psi_j$ with the top $k$ points in $\phi_{\ol{j}}$)
\item $\Rcal_j$, containing all points in $\li_{\lfloor \ol{j} \rfloor +3}, \li_{\lfloor \ol{j} \rfloor +4}, \ldots, \li_{\lfloor y_1 - y_2\rfloor}$, and
\item $X$, containing all lattice points in $\li_{\lfloor \ol{j} \rfloor +1}$ and $\li_{\lfloor \ol{j} \rfloor +2}$ of the form $\wv + \xv$ or $\wv + 2\xv$, where $\wv$ is one of the topmost $k$ points in $\phi_{\ol{j}}$.
\end{itemize}
See Figure \ref{fig:parallel}. We now show $S$ is a union of $k$ increasing paths of cardinality at least 3 less than the upper bound of Observation \ref{obs:up_bd_Ik}.

\begin{prop}\label{prp:construction}
For any $k\leq l_{J_0}$, let $j=\max\{j_k,\ol{j_k'}\}$. Then the set $S=S(j)$ defined above is a union of $k$ increasing paths. Moreover,$|S| \geq -3 + \sum_{i=1}^{\lfloor y_1-y_2\rfloor} \min( l_i, k).$
Thus, 
\[
-3 + \sum_{i=1}^{\lfloor y_1-y_2\rfloor} \min( l_i, k) \leq I_k \leq \sum_{i=1}^{\lfloor y_1-y_2\rfloor} \min( l_i, k).
\]
\end{prop}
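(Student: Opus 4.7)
My plan is to prove two claims: (i) the set $S$ decomposes into a union of $k$ increasing paths in $L_n^{\boxslash}$, and (ii) $|S| \geq -3 + \sum_{i=1}^{\lfloor y_1 - y_2 \rfloor} \min(l_i, k)$. Together with Observation \ref{obs:up_bd_Ik} these sandwich $I_k$ as claimed. For (i), I will construct $k$ paths $\pi_1, \ldots, \pi_k$ explicitly. The middle backbone of $\pi_m$ is the translate $P + (m-1)\yv$ furnished by Lemma \ref{lem:pathP}, an increasing walk hitting exactly one lattice point on each line $f_i$ for $\lceil j \rceil \leq i \leq \lfloor \ol{j} \rfloor$. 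The left extension of $\pi_m$ picks up, for each $1 \leq i \leq \lceil j \rceil - 1$, the $m$th point from the top of $\li_i$ (omitting that line from the itinerary of $\pi_m$ if $l_i < m$); because $j \geq j_k$, the values $l_i$ across this range are at most $k-1$, so every point of $\Lcal_j$ is assigned to a unique path. The right extension is built symmetrically using $\Rcal_j$, together with the transition set $X$ handling the two borderline lines $f_{\lfloor \ol{j} \rfloor + 1}$ and $f_{\lfloor \ol{j} \rfloor + 2}$.

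To confirm each $\pi_m$ is actually an increasing walk I must verify that consecutive chosen points differ by an increasing lattice vector. In the middle this is Lemma \ref{lem:pathP}. In the tails, the assignment ``$m$th from top on $\li_i$'' produces points that differ, as $i$ varies, by vectors determined by how the topmost crossing of $f_i$ sits relative to that of $f_{i-1}$; the rigid parallelogram structure from the basis pair $(\xv, \yv)$ under the inequalities \eqref{avbv1a} forces these differences to be increasing lattice vectors. Lemma \ref{lem:bot} is crucial at the two gluing interfaces $\Lcal_j \to \Mcal_j$ and $\Mcal_j \to X \to \Rcal_j$: it guarantees that the topmost $k$ points of $\psi_j$ and $\phi_{\ol{j}}$ can be translated by $\pm \xv$ (or $\pm 2\xv$) without leaving $[0,n]^2$, so the required glue vectors remain in the lattice.

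For (ii), I will decompose $|S| = |\Lcal_j| + |\Mcal_j| + |X| + |\Rcal_j|$ and compare termwise with the target $\sum_i \min(l_i, k)$. On $1 \leq i \leq \lceil j \rceil - 1$, where $l_i \leq k-1$, we have $|\Lcal_j| = \sum_i l_i = \sum_i \min(l_i, k)$. On $\lceil j \rceil \leq i \leq \lfloor \ol{j} \rfloor$, where $l_i \geq k$ (by the unimodality of $l_j$ on $[0, y_1 - y_2]$ together with $l_{j}, l_{\ol{j}} \geq k$), we have $|\Mcal_j| = k(\lfloor \ol{j} \rfloor - \lceil j \rceil + 1) = \sum_i \min(l_i, k)$. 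On $i \geq \lfloor \ol{j} \rfloor + 3$, symmetrically $|\Rcal_j| = \sum_i \min(l_i, k)$. The only remaining discrepancy is on the two lines $i = \lfloor \ol{j} \rfloor + 1$ and $\lfloor \ol{j} \rfloor + 2$, where the target contributes at most $\min(l_i, k) \leq k$ per line; here $X$ supplies $k$ points per line except for the small boundary effect controlled by Lemma \ref{lem:bot}, which shows only the very top among the $\wv+\xv, \wv+2\xv$ can fall outside $[0,n]^2$. The resulting total shortfall is at most $3$.

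The main obstacle is the increasing-walk verification for the left and right tails in (i): showing that as $i$ varies, the ``$m$th from top'' crossing on $\li_i$ always differs from the corresponding one on $\li_{i\pm 1}$ by a vector with nonnegative components. This reduces to a careful geometric analysis of how the band of $g_r$-lines meeting the interior of $[0,n]^2$ shifts as $f_i$ slides parallel to itself by $\yv/y_1$, and it is precisely the lattice-basis structure of $\{\xv, \yv\}$ provided by Proposition \ref{prp:shortbases} that forces the shift to be accomplished by an increasing lattice vector rather than by any other displacement.
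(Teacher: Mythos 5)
Your set $S$, its four-block decomposition, and the cardinality count (including the deficit of $3$ coming from $X$ and the two lines beyond $\lfloor\ol{j}\rfloor$) all match the paper, and sandwiching with Observation \ref{obs:up_bd_Ik} is the same. The gap is in part (i), exactly at the step you yourself flag as the main obstacle: it is \emph{not} true that the ``$m$th point from the top'' of $\li_i$ and of $\li_{i+1}$ differ by an increasing vector. In the left region the topmost point of $\li_{i+1}$ equals the topmost point of $\li_i$ plus $\xv-t\yv$, where $t$ is the jump of $\lceil i\,x_1/y_1\rceil$, so $t\in\{\lfloor x_1/y_1\rfloor,\lceil x_1/y_1\rceil\}$; whenever $y_1\nmid x_1$ the larger value occurs for a positive proportion of the $i$'s, and then the first component $x_1-ty_1$ is negative, making consecutive rank-$m$ points incomparable. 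Concretely, take $a=304$, $N=n=339$ (so $\tau=1$): the slow Euclidean algorithm gives $\xv=\la 29,2\ra$, $\yv=\la 10,-11\ra$, case \eqref{avbv1a}. The topmost points of $\li_1$ and $\li_2$ are $\xv-2\yv=\la 9,24\ra$ and $2\xv-5\yv=\la 8,59\ra$, which are incomparable; taking $k=9$ one checks $l_2=6<9\le l_3$, so $2<j_k\le 3$ while $J_0\approx 9.9$ and $l_{J_0}\ge k$, hence both lines lie in your left tail and your $\pi_1$ is not an increasing set. The ``rigid parallelogram structure'' only forces the rank-to-rank displacement to have the form $\xv-t\yv$; it does not keep $t\le\lfloor x_1/y_1\rfloor$, which is what increasingness requires.

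The paper certifies the chain decomposition by a different grouping that avoids this issue entirely: a point of $\Lcal_j$ (resp.\ of $X\cup\Rcal_j$) is assigned to the translate of $P$ that it reaches by pure $\xv$-steps (resp.\ $-\xv$-steps), so within the tails each chain runs along a single line $g_r$ of slope $m_{\xv}$ and increase is automatic. The real work is then to show that every such point does attach to one of the top $k$ translates, and that is where the paper uses the reduction (via the $180$-degree rotation) to the case $j=j_k$, the fact that $l_j$ is a step function with jumps of size $1$ or $2$, Lemma \ref{lem:ldiff} to get $l_{\ol{j}}\le k+2$, and Lemma \ref{lem:bot} to discard the $\xv$- and $2\xv$-translates of the at most two points of $\phi_{\ol{j}}$ below its top $k$ and to bound $|\li_{\lfloor\ol{j}\rfloor+3}|\le k$ --- none of which appears in your argument. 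Two smaller points: your assertion that $l_i\le k-1$ throughout the left tail also needs the reduction to $j=j_k$ (it can fail when $j=\ol{j_k'}>j_k$), and your description of Lemma \ref{lem:bot}'s role at $X$ is inverted (the excluded points come from the bottom of $\phi_{\ol{j}}$, not from top points leaving the box), though your net shortfall of $3$ agrees with the paper.
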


\begin{proof}
Without loss of generality, we may assume $j_k \leq \ol{j_k'}$, so that $j=j_k$. The case of $\ol{j_k'} < j_k$ follows the same approach under 180 degree rotation. 

First, we see each lattice point in $\Lcal_j$ can be connected by some number of $\xv$ steps to one of the translates of path $P$ that make up $\Mcal_j$. This is obvious if $l_j = k$. If $l_j = k+1$ and $l_{j-\epsilon} < k$, then because $l_j$ is a step function whose steps are of size 1 or 2, it must be that there are crossings at both the points where $f_j$ intersects the boundary of $[0,n]^2$. Hence, $\psi_{j-\epsilon}$ is a subset of the middle $k-1$ points in $\psi_j$, which are all in $\Mcal_j$. 

Similarly, any point in sets $\Rcal_j$ or $X$ can be connected by some number of $-\xv$ steps to one of these paths at a point in $\phi_{\ol{j}}$. The points of $X$ are defined to be those points of the form $\wv +\xv$ and $\wv + 2\xv$, where $\wv$ is one of the top $k$ points in $\phi_{\ol{j}}$, so they are obviously part of the same $k$ increasing sets. The argument for $\Rcal_j$ requires some more care.

From Lemma \ref{lem:ldiff}, we have $|l_j - l_{\ol{j}}|\leq 1$, so because $l_j \leq k+1$, we have $l_{\ol{j}} \leq k+2$. From Lemma \ref{lem:bot} we know that if $\zv$ and $\zv+\yv$ are the two bottommost points in $\phi_{\ol{j}}$, then $\zv + 3\xv \notin [0,n]^2$, and $(\zv+\yv)+2\xv \notin  [0,n]^2$. Hence crossing set $\li_{\lfloor \ol{j} \rfloor+3}$ has at most $l_{\ol{j}} -2 \leq k$ points, and moreover, each of them can be connected by $-3\xv$ to one of the top $k$ points in $\phi_{\ol{j}}$. 

Thus, all points in $S$ are connected to one of the $k$ translates of path $P$ and $S$ is the union of $k$ increasing sets.

For the claimed lower bound on the cardinality of $S$, we observe 
\[
 |S| = |\Lcal_j|+|\Mcal_j|+|\Rcal_j|+|X|,
\]
where 
\[
|\Lcal_j| = \sum_{i=1}^{\lceil j \rceil -1} l_i, \quad |\Mcal_j| = \sum_{i=\lceil j \rceil}^{\lfloor \ol{j} \rfloor} k, \quad \mbox{ and } \quad |\Rcal_j| = \sum_{i=\lfloor \ol{j} \rfloor+3}^{\lfloor y_1-y_2 \rfloor} l_i.
\]

Finally, we observe that if $\vv$ is the topmost lattice point in $\phi_{\ol{j}}$, then 
\[
 \phi_{\ol{j}} \subseteq \{ \vv, \vv+\yv, \ldots, \vv+k\yv, \vv+(k+1)\yv \}.
\]
Therefore, $X = \li_{\lfloor \ol{j} \rfloor +1} \cup \li_{\lfloor \ol{j} \rfloor +2} - \{ \vv+k\yv + \xv,  \vv+(k+1)\yv + \xv, \vv+k\yv + 2\xv\}$, and so
\[
 |X| \geq \min(l_{\lfloor \ol{j} \rfloor +1}, k) - 2 + \min(l_{\lfloor \ol{j} \rfloor +2}, k)-1,
\]
which completes the proof.
\end{proof}

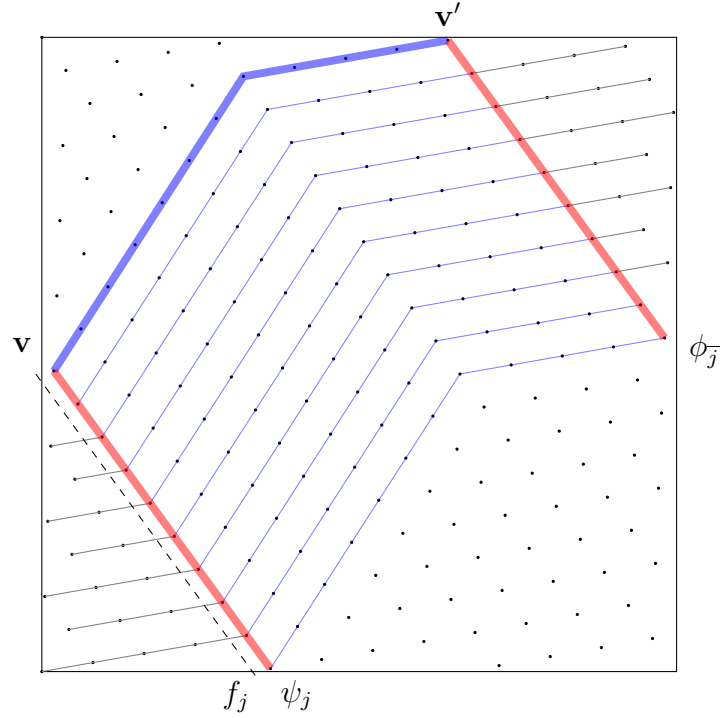
\begin{figure}
\begin{tikzpicture}[scale=.04, >=stealth]
\foreach \x in {0,...,210}{
 \draw[fill=black] (\x, {mod(\x*25,211)} ) circle (10pt);
}
\draw (0,211) circle (10pt);
\draw (0,211)--(0,0)--(211,0)--(211,211)--(0,211);
\draw[gray] (0,0)--(4*17,4*3);
\draw[gray] (17-8,3+11)--(4*17-8,4*3+11);
\draw[gray] (17-2*8,3+2*11)--(4*17-2*8,4*3+2*11);
\draw[gray] (2*17-3*8,2*3+3*11)--(4*17-3*8,4*3+3*11);
\draw[gray] (2*17-4*8,2*3+4*11)--(4*17-4*8,4*3+4*11);
\draw[gray] (3*17-5*8,3*3+5*11)--(4*17-5*8,4*3+5*11);
\draw[gray] (3*17-6*8,3*3+6*11)--(4*17-6*8,4*3+6*11);
\draw[gray] (211-17+2*8,211-3-2*11)--(211-4*17+2*8,211-4*3-2*11);
\draw[gray] (211-17+1*8,211-3-1*11)--(211-4*17+1*8,211-4*3-1*11);
\draw[gray] (211-17,211-3)--(211-4*17,211-4*3);
\draw[gray] (211-2*17+3*8,211-2*3-3*11)--(211-4*17+3*8,211-4*3-3*11);
\draw[gray] (211-2*17+4*8,211-2*3-4*11)--(211-4*17+4*8,211-4*3-4*11);
\draw[gray] (211-3*17+5*8,211-3*3-5*11)--(211-4*17+5*8,211-4*3-5*11);
\draw[gray] (211-3*17+6*8,211-3*3-6*11)--(211-4*17+6*8,211-4*3-6*11);
\draw[blue, line width=3,opacity=.5] (4*17-8*8,4*3+8*11)--(4*17-8*8+7*9,4*3+8*11+7*14)--(211-4*17-1*8, 211-4*3+1*11);
\draw[blue,opacity=.5] (4*17+1*8,4*3-1*11)--(4*17+1*8+7*9,4*3-1*11+7*14)--(211-4*17+8*8, 211-4*3-8*11);
\foreach \x in {1,...,8}{
\draw[blue,opacity=.5] (4*17+1*8-\x*8,4*3-1*11+\x*11)--(4*17+1*8+7*9-\x*8,4*3-1*11+7*14+\x*11)--(211-4*17+8*8-\x*8, 211-4*3-8*11+\x*11);
}
\draw[red,line width=3,opacity=.5] (4*17-8*8,4*3+8*11)--(4*17+1*8,4*3-1*11);
\draw[dashed] (3.65*17-8*8,3.65*3+8*11)--(3.65*17+1.3*8,3.65*3-1.3*11);
\draw (3.8*17+1*8,3.8*3-1*11) node[below left] {$f_j$};
\draw[red,line width=3,opacity=.5] (211-4*17+8*8,211-4*3-8*11)--(211-4*17-1*8,211-4*3+1*11);
\draw (4*17-8*8,4*3+8*11) node[xshift=-12pt,yshift=10pt, fill=white, inner sep=1] {$\vv$};
\draw (211-4*17-1*8,211-4*3+1*11) node[yshift=10pt, fill=white, inner sep=1] {$\vv'$};
\draw (4*17+1*8,4*3-1*11) node[yshift=-10pt,xshift=10pt, fill=white, inner sep=1] {$\psi_{j}$};
\draw (211-4*17+8*8, 211-4*3-8*11) node[xshift=15pt, yshift=-5pt, fill=white, inner sep=1] {$\phi_{\ol{j}}$};
\end{tikzpicture}
\caption{Example construction of set $S$. The blue path illustrates an increasing path $P$ constructed from $\vv$, the top point on $\psi_j$, to $\vv'$, the top point on $\phi_{\ol{j}}$. Here $a = 25$, $n=210$, and $N=211$. This yields $\xv=\langle 17,3\rangle$ and $\yv=\langle 8,-11\rangle$.}\label{fig:parallel}
\end{figure}

\subsection{Bounding the shape of $w$}\label{sec:shape-bounds}

Let $\lin(k) = \lfloor j_k' \rfloor- \lceil j_k \rceil +1$ denote the number of integers $i$ such that $l_i \geq k$. Since
\[
\lambda_k = I_k - I_{k-1}, 
\]
and since $\sum_i \min(l_i,k) - \sum_i \min(l_i,k-1) = \lin(k)$, we can translate Proposition \ref{prp:construction} into the following bound on $\lambda_k$, provided $k \leq l_{J_0}$:
\begin{equation}\label{eq:lamk}
 \left| \lambda_k - \lin(k) \right| \leq 3.
\end{equation}

The analogous result for $\lambda_k'$ follows by applying the transformation $\rho$ defined in in Subsection \ref{sec:symmetry}, which takes the pair $(\xv,\yv)$ satisfying \eqref{avbv1a} to the pair $(\xv^*,\yv^*) = (\langle -y_2,y_1\rangle, \langle x_2, -x_1\rangle)$ satisfying \eqref{avbv1b}. As discussed, each decreasing path in the lattice spanned by $(\xv,\yv)$ corresponds to an increasing path in the lattice spanned by $(\xv^*,\yv^*)$. 

Define
\[
J^*_0 =  \frac{y^*_1y^*_2(x^*_1-x^*_2)}{x^*_1y^*_2+x^*_2y^*_1} = \frac{x_1x_2(y_1+y_2)}{x_1y_2+x_2y_1},
\]
and let $l^*_j =|f^*_j \cap L^{\boxbslash}_n|$, where $f_j^* = j\xv^* + t\yv^*$ is the line of slope $m_{\yv^*}$ passing through $j\xv^*$.

Now, using the estimates in Proposition \ref{prop_lj} \eqref{eq:jk_est} and \eqref{eq:jkp_est}, we deduce
\[
\left|(y_1-y_2) - \frac{2|y_1y_2|}{n}k - \lin(k) \right| < \frac{2|y_1y_2|}{n}+1.
\]
With the characterization of $\yv$ in either \eqref{xyn_formulas_jodd} or \eqref{xyn_formulas_jeven}, we have
\[
\frac{|y_1y_2|}{n} = q_{2h+1}^2\delta_{2h+1}.  
\]
Recalling the Diophantine estimate for $\delta_{2h+1}$ in \eqref{eq:diophantine}, we have
\[
q_{2h+1}^2\delta_{2h+1} < \frac{q_{2h+1}^2}{q_{2h+2}q_{2h+1}} = \frac{q_{2h+1}}{q_{2h+2}} < 1.
\]
Thus, $\frac{2|y_1y_2|}{n}+1 < 3$.

We can now state simple bounds on $\lambda_k$, and by symmetry, $\lambda_k'$.

\begin{prop}\label{prp:twolines}
For $k \leq l_{J_0}$,
\[
 \left| (y_1-y_2)-\frac{2|y_1y_2|}{n}k - \lambda_k \right| < 4+\frac{2|y_1y_2|}{n} < 6,
\]
and for $k^* \leq l^*_{J^*_0}$,
\[
 \left| (x_1+x_2)-\frac{2x_1x_2}{n}k^* - \lambda'_{k^*} \right| < 4+\frac{2x_1x_2}{n} <6.
\]
\end{prop}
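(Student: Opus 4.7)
The plan is to combine the estimates already set up in Subsection \ref{sec:shape-bounds} via the triangle inequality, which gives the first inequality; the second inequality will then follow by the symmetry argument involving the lattice transformation $\rho$ from Subsection \ref{sec:symmetry}. Most of the heavy lifting is already done in Propositions \ref{prp:construction} and \ref{prop_lj}, so this is essentially a bookkeeping step.

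First I would recall the consequence of Proposition \ref{prp:construction} recorded in Equation \eqref{eq:lamk}, namely that for $k \leq l_{J_0}$ one has $|\lambda_k - \lin(k)| \leq 3$, where $\lin(k) = \lfloor j_k' \rfloor - \lceil j_k \rceil + 1$. Next I would combine the two-sided bounds on $j_k$ and $j_k'$ from Proposition \ref{prop_lj}, equations \eqref{eq:jk_est} and \eqref{eq:jkp_est}. Subtracting the bounds on $\lceil j_k \rceil$ from those on $\lfloor j_k' \rfloor$ (accounting for the ceilings/floors with an extra additive $1$), we obtain the estimate
\[
\left| (y_1-y_2) - \frac{2|y_1y_2|}{n} k - \lin(k) \right| < \frac{2|y_1y_2|}{n} + 1,
\]
which is exactly the estimate stated in the paragraph immediately preceding the proposition.

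Now the triangle inequality gives
\[
\left| (y_1-y_2) - \frac{2|y_1y_2|}{n} k - \lambda_k \right| \leq \left| \lambda_k - \lin(k) \right| + \left| (y_1-y_2) - \frac{2|y_1y_2|}{n} k - \lin(k) \right| < 3 + \frac{2|y_1y_2|}{n} + 1 = 4 + \frac{2|y_1y_2|}{n}.
\]
To upgrade $4 + 2|y_1y_2|/n$ to the absolute bound $6$, I would invoke the explicit formulas \eqref{xyn_formulas_jodd}--\eqref{xyn_formulas_jeven} giving $|y_1y_2|/n = q_{2h+1}^2 \delta_{2h+1}$, and then apply the Diophantine estimate \eqref{eq:diophantine} to obtain $q_{2h+1}^2 \delta_{2h+1} < q_{2h+1}/q_{2h+2} < 1$. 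This proves the first inequality.

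For the second inequality, I would apply the lattice symmetry $\rho$ from Subsection \ref{sec:symmetry}, which maps the basis $(\xv,\yv)$ satisfying \eqref{avbv1a} to the basis $(\xv^*,\yv^*) = (\langle -y_2, y_1\rangle, \langle x_2, -x_1\rangle)$ satisfying \eqref{avbv1b}, interchanging increasing and decreasing paths. Under $\rho$, the role of $\lambda_k$ is played by $\lambda'_{k^*}$, and the quantities $y_1 - y_2$ and $|y_1 y_2|/n$ become $y_1^* - y_2^* = x_1 + x_2$ and $|y_1^* y_2^*|/n = x_1 x_2/n$ respectively. Applying the first inequality in the transformed lattice (which is valid for $k^* \leq l^*_{J^*_0}$, exactly the stated range), and noting that the analogous Diophantine estimate gives $x_1 x_2/n < 1$, yields the second bound. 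The only thing to double check is that all the constructions in Subsections \ref{sec:my}--\ref{sec:greene} carry over to the other cases \eqref{avbv1b}, \eqref{avbv2a}, \eqref{avbv2b} under the symmetries $\rho,\omega$, as was claimed at the start of Subsection \ref{sec:symmetry}; this is the only mildly delicate point and was already argued to reduce to the single case \eqref{avbv1a}.
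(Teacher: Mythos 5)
Your proposal is correct and follows essentially the same route as the paper: the paper's own argument in Subsection \ref{sec:shape-bounds} is precisely the combination of the bound $|\lambda_k-\lin(k)|\leq 3$ from Proposition \ref{prp:construction} (Equation \eqref{eq:lamk}) with the estimates \eqref{eq:jk_est}--\eqref{eq:jkp_est} for $\lin(k)$, the Diophantine bound $|y_1y_2|/n=q_{2h+1}^2\delta_{2h+1}<1$, and the symmetry $\rho$ to transfer the result to $\lambda'_{k^*}$. The only cosmetic difference is that you make the triangle-inequality bookkeeping explicit, which the paper leaves implicit.
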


Proposition \ref{prp:twolines} establishes that there are two lines that roughly approximate the boundary of shape $A_{\lambda}$:
\eq\label{eq:xy-lines}
 y=(x_1+x_2)-\frac{2x_1x_2}{n}x \quad \mbox{ and } \quad  x=(y_1-y_2)-\frac{2|y_1y_2|}{n}y.
\eeq
One can verify (using $x_2y_1-y_2x_1=n$) that these two lines intersect at the point $(x_0,y_0)$ given by
\begin{equation}\label{eq:x0y0}
x_0 = \frac{n(y_1+y_2)}{x_1y_2+x_2y_1} \quad \mbox{ and } \quad y_0 = \frac{n(x_2-x_1)}{x_1y_2+x_2y_1}.
\end{equation}
Furthermore, from Proposition \ref{prop_lj} \eqref{eq:ljbound1} we see this point of intersection is very nearly $(l^*_{J^*_0}, l_{J_0})$ since 
\[
l^*_{J^*_0}-1 \leq  x_0 \leq l^*_{J^*_0}+1,
\]
and
\[
 l_{J_0}-1 \leq y_0 \leq l_{J_0}+1.
\]

Let $L(x)$ denote the piecewise linear function
\[
 L(x) =  
 \begin{cases} 
 (x_1+x_2)-\frac{2x_1x_2}{n}x & \mbox{for $0\leq x \leq x_0$,} \\
\frac{n(y_1-y_2-x)}{2|y_1y_2|} & \mbox{for $x_0 \leq x \leq y_1-y_2$},
\end{cases}
\]
and recall that $\partial A_{\lambda}$ denotes the boundary of $A_{\lambda}$ that does not lie on a coordinate axis. For fixed $x$, let $\dis( L(x), \partial A_{\lambda} )$ denote the minimum Euclidean distance from $(x, L(x))$ to a point on $\partial A_{\lambda}$. Proposition \ref{prp:twolines} gives us a uniform bound of 6 on this distance for all points $(x,y)$ with $x\leq x_0-1$ and $y\geq y_0+2x_1x_2/n$, as well as for all $(x,y)$ with $y\leq y_0-1$ and $x\geq x_0 + 2|y_1y_2|/n$.

We now carefully examine the geometry near the point $(x_0,y_0)$. Proposition \ref{prp:twolines} indicates that at vertical component $y_0-1$, the graph of  $L$ has horizontal component $x_0+2|y_1y_2|/n$, and the horizontal component of the boundary must satisfy 
\[
x_0-4  < x < x_0+ 4\left(1 +\frac{|y_1y_2|}{n}\right) <x_0+8.
\]
Similarly, at horizontal component $x_0-1$, the vertical component is bounded by
\[
y_0 - 4 < y < y_0 + 4\left(1+\frac{x_1x_2}{n}\right) < y_0 + 8.
\]
Thus we see the boundary of $A_\lam$ must pass through the rectangle given by $(x_0-4,x_0+8)\times (y_0-4, y_0+8)$, and so Proposition \ref{prp:twolines} can now be stated in the following form.

\begin{corollary}\label{cor:distance6}
For all $0\leq x \leq y_1-y_2$,
\[
 \dis( L(x), \partial A_{\lambda} ) < 8.
\]
\end{corollary}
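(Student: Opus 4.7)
My plan is to partition the domain $[0, y_1-y_2]$ into three regions based on position relative to the corner $(x_0, y_0)$ of $L$, and verify the distance bound in each. The first two regions, lying on the two linear pieces away from the corner, are handled by direct application of Proposition \ref{prp:twolines}; the third, a narrow interval around the corner, is handled by the geometric rectangle argument already developed in the paragraph immediately preceding the corollary.

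For the left region $0 \leq x \leq x_0 - 1$, I would take $k^* = \lceil x \rceil$ and note that $k^* \leq l^*_{J_0^*}$, since $x_0 \leq l^*_{J_0^*}+1$. Proposition \ref{prp:twolines} then yields $|\lambda'_{k^*} - (x_1+x_2) + \frac{2x_1x_2}{n}k^*| < 4 + \frac{2x_1x_2}{n}$. Since $L$ is linear with slope $-\frac{2x_1x_2}{n}$ on this piece and $|x - k^*| \leq 1$, the triangle inequality combined with the Diophantine estimate $\frac{x_1x_2}{n} < 1$ (established earlier in the section) gives $|L(x) - \lambda'_{k^*}| < 4 + \frac{4x_1x_2}{n} < 8$. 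The point $(x, \lambda'_{k^*})$ lies on $\partial A_\lambda$, so the Euclidean distance is controlled by the vertical gap. The right region $x_0 + \frac{2|y_1y_2|}{n} \leq x \leq y_1-y_2$ is handled by the symmetric argument, taking $k = \lceil L(x) \rceil$ and using the horizontal bound from Proposition \ref{prp:twolines}.

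For the central region $x_0 - 1 \leq x \leq x_0 + \frac{2|y_1y_2|}{n}$, the point $(x, L(x))$ is confined to the box $[x_0-1, x_0 + \frac{2|y_1y_2|}{n}] \times [y_0-1, y_0 + \frac{2x_1x_2}{n}]$, whose side lengths are bounded by $3$ via the Diophantine estimates. The pre-corollary discussion has already shown that $\partial A_\lambda$ passes through the rectangle $(x_0-4, x_0+8) \times (y_0-4, y_0+8)$. Since the boundary is a monotone staircase entering this rectangle from the top or left and exiting through the bottom or right, I would exhibit a specific nearby boundary point: take $(x, \lambda'_{\lceil x \rceil})$ if $\lceil x \rceil \leq l^*_{J_0^*}$, and $(\lambda_{\lceil L(x) \rceil}, L(x))$ otherwise. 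At least one of these alternatives is always available thanks to $x_0 \leq l^*_{J_0^*}+1$ and $y_0 \leq l_{J_0}+1$, and the distance bound then follows from the same computation as in the first two regions, with the extra slack above the bound of $6$ coming out of Proposition \ref{prp:twolines} absorbing the $O(1)$ deviation between the continuous variable and its integer ceiling.

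The main obstacle is the bookkeeping in the central region, where the hypotheses of Proposition \ref{prp:twolines} only barely apply (or just fail); the bound of $8$, rather than the bound of $6$ provided away from the corner, is precisely the slack one needs to absorb this corner geometry. Once the three-region case analysis is set up correctly, each case reduces to estimates already in place earlier in the section.
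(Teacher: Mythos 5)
Your three-region decomposition and the calculations in the left and right regions track the paper's argument exactly: Proposition \ref{prp:twolines} applied at an integer index near $x$ (or near $L(x)$), the slack between the bound $4 + 2x_1x_2/n < 6$ and the target $8$ absorbing both the $O(1)$ rounding loss and the Diophantine factor, and the observation that $(x,\lambda'_{\lceil x\rceil})$ actually lies on $\partial A_\lambda$. This is the same proof as the paper's, just with the indices written out.

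The gap is in your treatment of the central region. You claim that for $x_0 - 1 \leq x \leq x_0 + 2|y_1y_2|/n$, at least one of $\lceil x\rceil \leq l^*_{J_0^*}$ or $\lceil L(x)\rceil \leq l_{J_0}$ always holds ``thanks to $x_0 \leq l^*_{J_0^*}+1$ and $y_0 \leq l_{J_0}+1$.'' This does not follow. Those inequalities only yield $\lceil x\rceil \leq l^*_{J_0^*}+2$ and $\lceil L(x)\rceil \leq l_{J_0}+2$ in the central region, and the two can overshoot simultaneously: take $x_0$ and $y_0$ each just above an integer (so $l^*_{J_0^*} = \lfloor x_0\rfloor$, $l_{J_0}=\lfloor y_0\rfloor$) and $x$ slightly larger than $x_0$ with $L(x)$ still slightly above $y_0$. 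Then both ceilings exceed the allowed thresholds and neither anchor you propose is covered by Proposition \ref{prp:twolines}. The paper avoids this by anchoring at the \emph{fixed} integers $\lfloor x_0 - 1\rfloor \leq l^*_{J_0^*}$ and $\lfloor y_0-1\rfloor \leq l_{J_0}$, which are always admissible, establishing that $\partial A_\lambda$ meets the vertical line $x = x_0-1$ in $(y_0-4, y_0+8)$ and the horizontal line $y=y_0-1$ in $(x_0-4,x_0+8)$; the monotonicity of the staircase then confines it enough near $(x_0,y_0)$ to yield the bound of $8$ for all $(x,L(x))$ in the small box. Replacing your $x$-dependent ceilings by these two fixed anchors and tracking the combined horizontal/vertical offsets would close the gap and bring the argument fully in line with the paper.
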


For example, with $a=25$, $n=210$ and $N=211$ with $w(n,a/N)$ as shown in Figure \ref{fig:parallel}, we have $\xv = \langle 17,3\frac{210}{211}\rangle$, $\yv = \langle 8, -11\frac{210}{211}\rangle$, and therefore we can compute
\[
 L(x) = \begin{cases}
  \frac{4217}{211}-\frac{102}{211} x & \mbox{for $0\leq x \leq \frac{622}{163}$,} \\
\frac{1999}{88}-\frac{211}{176}x & \mbox{for $\frac{622}{163} \leq x \leq \frac{3998}{211}$}.
\end{cases}
\]
In Figure \ref{fig:exampleboundary} we see $L(x)$ superimposed on $A_{\lambda}$.

\begin{figure}
\[
\begin{tikzpicture}[scale=.4]
\draw (0,22)--(0,0)--(22,0);
\draw (18,0)--(18,3)--(16,3)--(16,5)--(14,5)--(14,7)--(12,7)--(12,10)--(10,10)--(10,12)--(8,12)--(8,14)--(6,14)--(6,16)--(4,16)--(4,19)--(0,19);
\foreach \x in {1,2,3}
{
 \draw (0,\x)--(18,\x);
 \draw (0,7+\x)--(12,7+\x);
 \draw (0,16+\x)--(4,16+\x);
 \draw (\x,0)--(\x,19);
}
\foreach \x in {1,2}
{
 \draw (0,3+\x)--(16,3+\x);
 \draw (0,5+\x)--(14,5+\x);
 \draw (0,10+\x)--(10,10+\x);
 \draw (0,12+\x)--(8,12+\x);
 \draw (0,14+\x)--(6,14+\x);
 \draw (\x+4,0)--(\x+4,16);
 \draw (\x+6,0)--(\x+6,14);
 \draw (\x+8,0)--(\x+8,12);
 \draw (\x+10,0)--(\x+10,10);
 \draw (\x+12,0)--(\x+12,7);
 \draw (\x+14,0)--(\x+14,5);
 \draw (\x+16,0)--(\x+16,3);
}
\draw (4,0)--(4,16);
\draw[line width=2,blue] (0,19.986)--(3.816,18.167)--(18.95,0);
\draw (0,20) node[left] {$x_1+x_2$};
\draw (4,18) node[above right] {$(x_0,y_0)$};
\draw (19,0) node[below] {$y_1-y_2$};
\end{tikzpicture}
\]
\caption{The planar set $A_{\lambda}$ for $w(210, \frac{25}{211})$, along with the piecewise linear curve $L(x)$.}\label{fig:exampleboundary}
\end{figure}
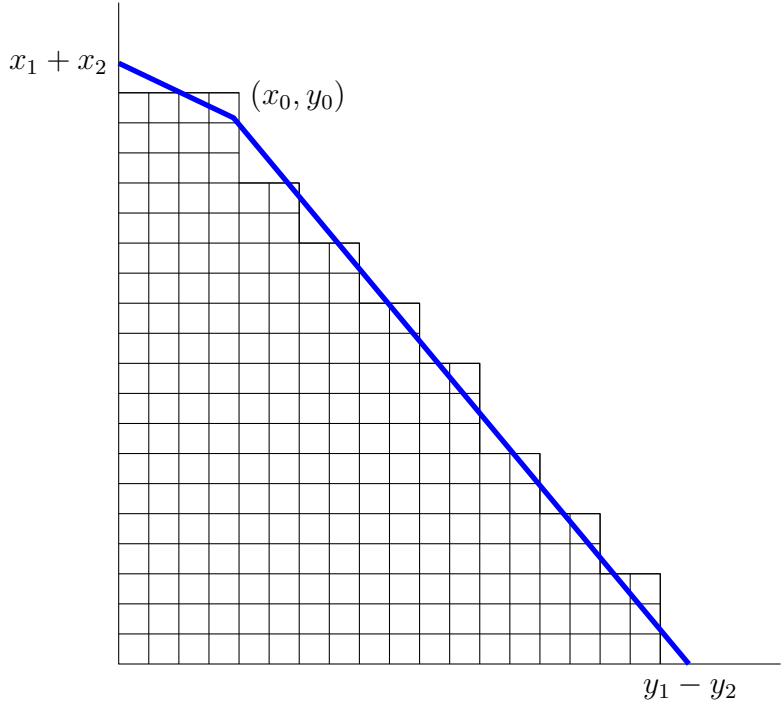

\begin{remark}\label{rem:betashift}
As mentioned in the introduction, the paper \cite{BKPT} actually studied the slightly more general set of permutations generated by fractional parts of lines $f_{\alpha,\beta}(i) = \alpha i + \beta \mod 1$, depending on two independent real parameters. As discussed in \cite{BKPT}, it is easily shown that varying $\beta$ (for fixed $n$ and $\alpha$) acts by cyclic shifting of the \sos{} permutation. Intuitively, a cyclic shift has little effect on monotone subsequences. From our perspective, the vertical shift by $\beta$ amounts to viewing the lattice $L[\xv,\yv]$ within the box $[0,n]\times [\beta', n+\beta']$, where $\beta'=n\beta$. All our definitions, lemmas, and constructions are easily adapted to this shift. We leave details to the interested reader.
\end{remark}

\subsection{Proofs for irrational $\alpha$}\label{sec:proof}

We have already noted that Proposition \ref{prp:l+bounds} implies Theorem \ref{thm:BSarm} for rational numbers. Similarly, by substituting the relevant expressions for $x_1, x_2, y_1, y_2$ from \eqref{xyn_formulas_jodd} or \eqref{xyn_formulas_jeven} into Proposition \ref{prp:twolines} and Corollary \ref{cor:distance6}, we prove Theorems \ref{thm:lam_bounds} and \ref{thm:main}, respectively, for rational numbers. We now show how to extend those results to all reals. 

Fix $n\in \N$ and an irrational number $0<\alpha<1$. Denote the sequence of convergents for $\alpha$ as  $p_0/q_0, p_1/q_1, \dots$, and let $i$ be the smallest non-negative integer such that 
\eq\label{eq:alpha_conv}
\delta_i = \left|\alpha - \frac{p_i}{q_i} \right| \le \frac{1}{n}.
\eeq
If $i=2h+2$ is even, then  $\frac{p_i}{q_i} < \alpha$, and we have
\[
\frac{p_{2h+1}}{q_{2h+1}}-\alpha \ge \frac{1}{n}, \qquad 0<\alpha-\frac{p_{2h+2}}{q_{2h+2}} < \frac{1}{n}.
\]
In this case we apply the program of Section \ref{sec:fulllattice} with $a= p_{m}$ and $N=q_{m}$, where $m$ is an {\it even} integer satisfying $q_{m}>n$.
Note then that $\frac{p_{m}}{q_{m}} <\alpha$, so we have
\[
\frac{p_{2h+1}}{q_{2h+1}}-\frac{p_{m}}{q_{m}}> \frac{p_{2h+1}}{q_{2h+1}}-\alpha \ge \frac{1}{n}, \qquad 0<\frac{p_{m}}{q_{m}}-\frac{p_{2h+2}}{q_{2h+2}}<\alpha-\frac{p_{2h+2}}{q_{2h+2}} < \frac{1}{n},
\]
which implies that the convergents appearing in \eqref{xyn_formulas_jodd} are the same as those defined by \eqref{eq:alpha_conv}. 

Similarly, if $i=2h+1$ is odd, then we take $a = p_{m}$ and $N=q_{m}$, where $m$ is an {\it odd} integer satisfying $q_{m}>n$ to yield that the convergents appearing in \eqref{xyn_formulas_jeven} are the same as those defined by \eqref{eq:alpha_conv}. 

With these choices of $a$ and $N$, all the results of Section \ref{sec:fulllattice} can be applied to $\alpha$. Let $\alpha(m)=p_m/q_m$ denote the convergent we have chosen. Since $\alpha(m)$ is a has denominator $q_m>n$, both $\alpha$ and $\alpha(m)$ belong to the same Farey interval in $F^{(n)}$. But then Theorem \ref{thm:suranyi} tells us the corresponding \sos\, permutations are equal: $w(n,\alpha)=w(n,\alpha(m))$. Furthermore, these numbers $\alpha$ and $\alpha(m)$ induce same Schensted shape $A_{\lambda}$. 

For the rational approximation $\alpha(m)$, let 
\[
\delta_i(m)= \left|\alpha(m) - \frac{p_i}{q_i}\right|.
\]
For fixed $n$, $m$ can be chosen arbitrarily large, provided $m$ has the desired parity and $q_m>n$. Since our results hold for any such $m$, we may then take the limit as $m\to\infty$, in which case $\delta_i(m)\to \delta_{i}$. This proves Theorems \ref{thm:BSarm}, \ref{thm:lam_bounds}, and \ref{thm:main} for any real $\alpha$.

\section{Further directions}\label{sec:further}

We hope the results in this paper can serve as a starting point for further study. Here are a few directions they could lead.

\begin{itemize}
\item As described in the {\bf Big Question} in the introduction, our original motivation was to study the Schensted shape of {\it random} \sos\ permutations. We currently cannot describe the probability distribution on partitions which arises from the uniform distributions on $\sos_n$ via Schensted insertion. It would be interesting to see if there is an average shape which has a scaling limit as $n\to\infty$.

\item The set $\sos_n$ represents a rather extreme restriction of the full symmetric group, which is the reason for the markedly different quality of the Schensted shape of a \sos\ permutation compared to a random one. Numerical experiments indicate that other sequences of the form $(f(1), f(2), \dots, f(n))$ have Schensted shapes similar to that of a uniformly random permutation (in all of $S_n$) for a fairly general class of nonlinear functions $f$, including nonlinear polynomials. If $f$ is chosen to interpolate between a nonlinear function and a linear one as $n\to\infty$, it is very plausible that the Schensted shape would interpolate between the Logan--Shepp/Vershik--Kerov limit shape and the piecewise linear one studied in this paper. For example, one could consider  $f(i) =n^{-\gamma}\beta i^2+ \alpha i \mod 1$ for fixed irrational numbers $\alpha$ and $\beta$ and an appropriate scaling exponent $\gamma>0$.
\item For pattern avoiding permutations, there are some results on the distribution of the longest increasing subsequence as $n\to\infty$ \cite{DHW, MinerPak}. One could also consider their full Schensted shape. The paper \cite{MinerPak} considers a different notion of shape, namely the density of points on the graph $(i, w(i))$.
\end{itemize}

\bibliographystyle{plain}
\bibliography{bibliography}

\end{document}